\numberwithin{equation}{section}
\def\Line(#1,#2)(#3,#4){\qbezier(#1,#2)(#1,#2)(#3,#4)}
\newcommand{\inp}[2][]{\left(#1,\, #2\right)}
\newcommand{\gnp}[2][]{\langle#1,\, #2\rangle}
\newcommand{\bs}[1]{\boldsymbol{#1}}
\newtheorem{remark}{Remark}[section]
\newtheorem{lemma}{Lemma}[section]
\newtheorem{definition}{Definition}[section]
\newtheorem{theorem}{Theorem}[section]
\newtheorem{proposition}{Proposition}[section]
\def\blue{\color{black}}
\def\Tc{\mathcal{T}}
\def\Qc{\mathcal{Q}}
\def\Pc{\mathcal{P}}
\def\Kc{\mathcal{K}}
\def\Ic{\mathcal{I}}
\def\Q{\mathcal{Q}}
\def\B{\mathcal{B}}
\def\Bt{\mathcal{\tilde{B}}}
\def\Rc{\mathcal{R}}
\def\R{\mathbb{R}}
\def\s{\sigma}
\def\Eh{\hat{E}}
\def\Pih{\hat{\Pi}}
\def\eh{\hat{e}}
\def\xh{\hat{x}}
\def\yh{\hat{y}}
\def\zh{\hat{z}}
\def\Wh{\hat{W}}
\def\wh{\hat{w}}
\def\nablah{\hat{\nabla}}
\def\Bbt{\boldsymbol{\Bt}}
\def\Bb{\boldsymbol{\B}}
\def\u{\mathbf{u}}
\def\n{\mathbf{n}}
\def\r{\mathbf{r}}
\def\nh{\mathbf{\hat{n}}}
\def\rh{\mathbf{\hat{r}}}
\def\V{\mathbf{V}}
\def\Vh{\mathbf{\hat{V}}}
\def\v{\mathbf{v}}
\def\vh{\mathbf{\hat{v}}}
\def\w{\mathbf{w}}
\def\q{\mathbf{q}}
\def\qh{\mathbf{\hat{q}}}
\def\p{\mathbf{p}}
\def\ph{\mathbf{\hat{p}}}
\def\x{\mathbf{x}}
\def\xbh{\mathbf{\xh}}
\def\Ki{K^{-1}}
\def\Kci{\Kc^{-1}}
\def\dvrg{\nabla\cdot}   
\def\dvr{\operatorname{div}}  
\def\O{\Omega}
\def\dO{\partial\Omega}
\def\G{\Gamma}
\def\Gn{\Gamma_N}
\def\Gd{\Gamma_D}
\begin{document}
	
\title{Higher order multipoint flux mixed finite element methods on 
quadrilaterals and hexahedra}

\author{Ilona Ambartsumyan\thanks{Department of Mathematics, University of
Pittsburgh, Pittsburgh, PA 15260, USA;~{\tt \{ila6@pitt.edu, elk58@pitt.edu, yotov@math.pitt.edu\}}. Partially supported by DOE grant DE-FG02-04ER25618 and NSF grants DMS 1418947 and DMS 1818775.} \thanks{Institute for Computational Engineering and Sciences, The University of Texas at Austin, Austin, TX 78712, USA;
{\tt \{ailona@austin.utexas.edu, ekhattatov@austin.utexas.edu\}}.}~\and
Eldar Khattatov\footnotemark[1] \footnotemark[2] \and
		Jeonghun Lee\footnotemark[3]\thanks{Department of Mathematics, Baylor University, Waco, TX 76798, USA;~{\tt \{Lee\_Jeonghun@baylor.edu\}}.}\and
		Ivan Yotov\footnotemark[1]~}

	\date{\today}
	\maketitle
	\begin{abstract}
	We develop higher order multipoint flux mixed finite element
        (MFMFE) methods for solving elliptic problems on quadrilateral
        and hexahedral grids that reduce to cell-based pressure
        systems. The methods are based on a new family of mixed finite
        elements, which are enhanced Raviart-Thomas spaces with
        bubbles that are curls of specially chosen polynomials. The
        velocity degrees of freedom of the new spaces can be
        associated with the points of tensor-product Gauss-Lobatto
        quadrature rules, which allows for local velocity elimination
        and leads to a symmetric and positive definite cell-based
        system for the pressures. We prove optimal $k$-th order
        convergence for the velocity and pressure in their natural
        norms, as well as $(k+1)$-st order superconvergence for the
        pressure at the Gauss points.  Moreover, local
        postprocessing gives a pressure that is superconvergent of
        order $(k+1)$ in the full $L^2$-norm. Numerical results
        illustrating the validity of our theoretical results are
        included.
	\end{abstract}

{\bf AMS Subject Classification:}
65N08, 65N12, 65N15, 65N30, 76S05

	
\section{Introduction.}	
Mixed finite element (MFE) methods
\cite{brezzi1991mixed,roberts1991mixed} are commonly used for modeling
of fluid flow and transport, as they provide accurate and locally mass
conservative velocities and robustness with respect to heterogeneous,
anisotropic, and discontinuous coefficients. A disadvantage of the MFE
methods in their standard form is that they result in coupled
velocity-pressure algebraic systems of saddle-point type, which
restricts the use of efficient iterative solvers. To address this
issue, there has been extensive work on developing modifications of
MFE methods that can be reduced to positive definite systems, such as
hybridization \cite{arnold1985mixed,brezzi1991mixed} or relating them
to cell-centered finite difference or finite volume methods.  In the
latter approach, a common technique is to employ special quadrature
rules, also referred to as mass lumping, that allow for local velocity
elimination, resulting in cell-centered pressure systems,
Early works
\cite{russell1983finite,weiser1988convergence,baranger1996connection}
based on the lowest order Raviart-Thomas (RT$_0$) spaces
\cite{raviart1977mixed} were limited to two-point flux
approximations, which were not robust for general quadrilateral grids
or tensor-valued coefficients. An extension to higher order RT spaces,
as well as the second order Brezzi-Douglas-Fortin-Marini (BDFM$_2$)
spaces \cite{brezzi1991mixed} was developed in
\cite{cai2003development}, but was also limited to rectangular grids
and diagonal tensor coefficients.  The expanded MFE method
\cite{arbogast1998enhanced,arbogast1997mixed} was designed to handle
full tensor coefficients and general grids, but suffered from reduced
convergence for problems with discontinuous coefficients.

More recently, a special MFE method, the multipoint flux mixed finite
element (MFMFE) method
\cite{wheeler2006multipoint,ingram2010multipoint} was developed, which
reduces to cell-centered finite differences on quadrilateral,
hexahedral and simplicial grids, and exhibits robust performance for
discontinuous full tensor coefficients. The method was motivated by
the multipoint flux approximation (MPFA) method
\cite{aavatsmark2002introduction,aavatsmark1998discretization,
  edwards2002unstructured,edwards1998finite,
  aavatsmark2007convergence}, which was developed as a finite volume
method. Unlike the MPFA method, the variational formulation of the
MFMFE method allows for its complete theoretical study of
well-posedness and convergence. The MFMFE method is based on the
lowest order Brezzi-Douglas-Marini (BDM$_1$) space
\cite{brezzi1985two,Nedelec86} on simplices and quadrilaterals, and an enhanced
Brezzi-Douglas-Duran-Fortin (BDDF$_1$) space
\cite{BDDF,ingram2010multipoint} on hexahedra. The method utilizes the
trapezoidal quadrature rule for the velocity mass matrix, which
reduces it to a block-diagonal form with blocks associated with mesh
vertices. The velocities can then be easily eliminated, resulting in a
cell-centered pressure system. A similar approach was also presented
independently in \cite{Brezzi-Fortin-Marini-2006} for simplicial
grids, and a related formulation based on a broken Raviart-Thomas
space was developed in
\cite{Klausen-Winther-2006a,Klausen-Winther-2006b}. Motivated by the
work in \cite{Klausen-Winther-2006a}, a nonsymmetric version of the
MFMFE method designed to converge on general quadrilateral and
hexahedral grids was developed in \cite{WheXueYot-nonsym}. A
multiscale mortar MFMFE method on multiple subdomains with
non-matching grids was proposed in \cite{WheXueYot-msmortar}. In
\cite{LSY}, a local flux mimetic finite difference method was
developed on polyhedral grids, exploring connections to the MFMFE and
MPFA methods, see also related work in
\cite{Klaus-Steph,Steph}. Furthermore, on simplicial grids and for
problems with full tensor coefficients, using the MPFA principle, it
was shown in \cite{Vohralik-mpfa,Vohralik-Wohlmuth,Younes-Fontaine}
that the RT$_0$ MFE method can be related to a finite volume method
with one pressure unknown per element.

To the best of our knowledge, the aforementioned 
MPFA and MFMFE methods with theoretical 
convergence proofs are limited to the lowest order
approximation. In this paper we develop a family of arbitrary order
symmetric MFMFE methods on $h^2$-perturbed quadrilateral and hexahedral grids.  The
main obstacle in extending the original lowest order BDM$_1$ and
BDDF$_1$ MFMFE methods to higher order is that the degrees of freedom
of their higher order versions cannot be associated with
tensor-product quadrature rules. To circumvent this difficulty, we
construct a new family of mixed finite elements fulfilling this
requirement.  A key of the construction is the finite element exterior
calculus framework \cite{AFW06,AFW10}, which is used in the extension of MFMFE to Hodge
Laplace equations \cite{Lee-Winther}. However, we consider only the two and
three dimensional cases with $H(\dvr)$ element in this paper, 
so no prerequisite of the exterior calculus language is necessary.  
The new spaces are enhanced Raviart-Thomas spaces with bubbles that are curls of specially chosen
polynomials, so that each component of the velocity vector is of
dimension $\Qc^k(\R^d)$ and the velocity degrees of freedom can be
associated with the points of a tensor-product Gauss-Lobatto
quadrature rule \cite{abramowitz1964handbook}. The application of this
quadrature rule leads to a block-diagonal velocity mass matrix with
blocks corresponding to the nodes associated with the velocity degrees
of freedom. This allows for a local elimination of the fluxes in terms
of the pressures from the surrounding elements, either sharing a
vertex, or an edge/face.  This procedure results in a symmetric and
positive-definite cell-based system for the pressures with a compact
stencil, allowing for efficient solvers to be used. The
proposed technique allows for more straightforward and efficient
implementation and results in reduced computational time. 
We remark that the lowest order version of our new elements has
the same number of degrees of freedom as the elements used in previous 
MFMFE methods, but they are {\it different} elements. This work
is {\it not} 
a direct extension of the previous MFMFE methods to higher order, but 
a new framework for explicit construction of higher order MFMFE methods.

We present well-posedness and convergence analysis of the proposed
family of higher order methods. To this end, we establish unisolvency
and approximation properties of arbitrary order $k$ of the new family
of enhanced Raviart-Thomas family of spaces. Since we study the
symmetric version of the MFMFE method, which relies on mapping to a
reference element via the Piola transformation, the analysis is
limited to $h^2$-perturbed parallelograms or parallelepipeds, similar
to the restriction in the lowest order symmetric MFMFE method
\cite{wheeler2006multipoint,ingram2010multipoint}. The convergence
analysis combines MFE analysis tools with quadrature error analysis,
using that the Gauss-Lobatto quadrature rule possesses sufficient
accuracy to preserve the order of convergence. We establish
convergence of $k$-th order for the velocity in the $H(\dvr)$-norm and
the pressure in the $L^2$-norm. We also employ a duality argument to
show that the numerical pressure is $(k+1)$-st order superconvergent
to the $L^2$-projection of the pressure in the finite element space,
which implies superconvergence at the Gauss points. Moreover, we
show that a variant of the local postprocessing developed in
\cite{Stenberg-91} results in a pressure that is $(k+1)$-st order
accurate in the full $L^2$-norm. All theoretical results are verified
numerically. 

The rest of the paper is organized as follows. The new family of finite
element spaces and the general order MFMFE methods are developed in
Section 2.  The error analyses for the velocity and pressure are
presented in Sections 3 and 4, respectively. Numerical experiments are
presented in Section 5.

	
\section{Definition of the method.}
\subsection{Preliminaries.} 
We consider a second order elliptic PDE written as a system of two
first order equations,
\begin{gather}
\u = -K\nabla p, \quad \dvrg \u  = f \mbox{ in } \O,\label{cts-1}\\
p = g  \mbox{ on } \Gd, \quad 
\u\cdot\n = 0 \mbox{ on } \Gn \label{cts-2},
\end{gather}
where $\O\subset\R^d$ ($d= 2, 3$) is an open bounded polytopal domain with a
boundary $\dO = \bar{\G}_D \cup \bar{\G}_N$ such that $\Gd\cap\Gn = \emptyset$,
with measure$(\Gd) > 0$. Here $\n$ is the outward unit
normal vector field on $\dO$, and $K$ is symmetric and
uniformly positive definite tensor satisfying, for some $0 < k_0 < k_1
< \infty$,
	\begin{align}
	k_0\xi^T\xi \le \xi^T K(\x) \xi \le k_1\xi^T\xi, 
\quad \forall \x\in\O,\, \forall \xi\in\R^d. \label{perm-bounds}
	\end{align}
In applications related to modeling flow in porous media, $p$ is the
pressure, $\u$ is the Darcy velocity, and $K$ represents the
permeability tensor divided by the viscosity. The above choice of boundary
conditions is made for the sake of simplicity. More general boundary
conditions, including nonhomogeneous full Neumann ones, can also be
treated.
	
Throughout the paper we will use the following standard notation. For
a domain $G\subset\R^d$, the $L^2(G)$ inner product and norm for
scalar and vector valued functions are denoted by
$\inp[\cdot]{\cdot}_G$ and $\|\cdot\|_G$, respectively. The norms and
seminorms of the Sobolev spaces $W^{s,p}(G)$, $s\in\R$, $p \ge 1$ are
denoted by $\|\cdot\|_{s,p,G}$ and $|\cdot|_{s,p,G}$,
respectively. Conventionally, the norms and seminorms of Hilbert
spaces $H^s(G)$ are denoted by $\|\cdot\|_{s,G}$ and $|\cdot|_{s,G}$,
respectively. We omit $G$ in the subscript if $G=\O$. For a section of
the domain or element boundary $S \subset \R^{d-1}$ we write
$\gnp[\cdot]{\cdot}_S$ and $\|\cdot\|_S$ for the $L^2(S)$ inner
product (or duality pairing) and norm, respectively. For a
tensor-valued function $M$, let $\|M\|_{\alpha} =
\max_{i,j}\|M_{i,j}\|_{\alpha}$ for any norm $\|M\|_{\alpha}$. We will
also use the space
	$$ H(\dvr;\O) = \{\v\in L^2(\O,\R^d) : \dvrg \v \in L^2(\O)\} $$
	equipped with the norm
	$$ \|\v\|_{\dvr} = \left( \|\v\|^2 + \|\dvrg\v\|^2 \right)^{1/2}. $$
The weak formulation for \eqref{cts-1}--\eqref{cts-2} reads as
follows: find $(\u,p) \in \V\times W$ such that
	\begin{align}
	\inp[\Ki\u]{\v} - \inp[p]{\dvrg\v} &=  - \gnp[g]{\v\cdot\n}_{\Gd}, \quad \v\in\V, \label{weak-1}\\
	\inp[\dvrg\u]{w} &= \inp[f]{w}, \quad w\in W, \label{weak-2} 
	\end{align}
	where
	$$ \V = \{\v \in H(\dvr;\O) : \v\cdot\n = 0 \mbox{ on } \Gn\},\quad W = L^2(\O). $$
It was shown \cite{brezzi1991mixed,roberts1991mixed} that 
\eqref{weak-1} - \eqref{weak-2} has a unique solution.
	
	
	\subsection{A finite element mapping.}\label{sec:map}
Let $\Tc_h$ be a finite element partition of $\O$ consisting of
quadrilaterals in 2d or hexahedra in 3d, where $h =
\max_{E\in\Tc_h}{\mbox{diam}(E)}$. We assume $\Tc_h$ to be shape
regular and quasi-uniform \cite{edwards1998finite}. For any element
$E\in\Tc_h$ there exists a bilinear (trilinear) bijection mapping
$F_E: \Eh\to E$, where $\Eh = [-1,1]^d$ is the reference square (cube). Denote
the Jacobian matrix by $DF_E$, and let $J_E = |\det(DF_E)|$. Denote
the inverse mapping by $F_E^{-1}$, its Jacobian matrix by $DF_E^{-1}$,
and let $J_{F_E^{-1}} = |\det(DF_E^{-1})|$. For $\xbh = F_E^{-1}(\x)$ 
we have that
$$ DF_E^{-1}(\x) = (DF_E)^{-1}(\xbh), 
\qquad J_{F_E^{-1}}(\x) = \frac{1}{J_E(\xbh)}. 
$$
Denote by $\rh_i$, $i=1,\dots, 2^d$, the vertices of $\Eh$, where
$\rh_1=(0,0)^T,\,\rh_2=(1,0)^T,\,\rh_3=(1,1)^T$, and $\rh_4=(0,1)^T$
in 2d, and $\rh_1=(0,0,0)^T,\,\rh_2=(1,0,0)^T,\,\rh_3=(1,1,0)^T,\,\rh_4=(0,1,0)^T,\,\rh_5=(0,0,1)^T,\,\rh_6=(1,0,1)^T,\,\rh_7=(1,1,1)^T,$ and $\rh_8=(0,1,1)^T$ in 3d.
Let $\r_i$, $i=1,\dots, 2^d$, be the
corresponding vertices of element $E$. The outward unit normal vector fields
to the facets of $E$ and $\Eh$ are denoted by $\n_i$ and $\nh_i$,
$i=1,\dots, 2d$, respectively, where facet is a face in 3d or an edge in 2d.
The bilinear (trilinear) mapping is given by
	\begin{align}
	F_E(\rh) &= \r_1 + \r_{21}\xh + \r_{41}\yh + (\r_{34}-\r_{21})\xh\yh, \quad \mbox{in 2d},\label{map-def-2d}\\
	F_E(\rh) &= \r_1 + \r_{21}\xh + \r_{41}\yh + \r_{51}\zh + (\r_{34}-\r_{21})\xh\yh + (\r_{65}-\r_{21})\xh\zh + (\r_{85}-\r_{41})\yh\zh \nonumber\\
	&\quad+ ((\r_{21}-\r_{34}) - (\r_{65}-\r_{78}))\xh\yh\zh, \,\,\quad \mbox{in 3d}\label{map-def-3d},
	\end{align}
where $\r_{ij} = \r_i - \r_j$. For the 3d case we note that
the elements can have nonplanar faces. 
	
	Let $\hat{\phi}(\xbh)$ be defined on $\Eh$, and let $\phi = \hat{\phi}\circ F_E^{-1}$. Using the classical formula $\nabla\phi = (DF_E^{-1})^T\nablah\hat{\phi}$, it is easy to see that for any facet $e_i \subset \partial E$
	\begin{align}
	\n_i = \frac{1}{J_{e_i}}J_E(DF_E^{-1})^T\nh_i, 
\quad J_{e_i} = |J_E(DF_E^{-1})^T\nh_i|_{\R^d},\label{map}
	\end{align}
where $|\cdot|_{\R^d}$ denotes the Euclidean vector norm in $\R^d$.
	Another straightforward calculation shows that, for all element types, the mapping definitions and the shape-regularity and quasi-uniformity of the grids imply that
	\begin{align}
	\|DF_E\|_{0,\infty,\Eh} \sim h, \quad \|J_E\|_{0,\infty,\Eh}\sim h^d, \quad \|DF_E^{-1}\|_{0,\infty,E} \sim h^{-1}, \mbox{ and } \|J_{F_E^{-1}}\|_{0,\infty,E} \sim h^{-d}, \label{mapping-bounds}
	\end{align}
	where the notation $a\sim b$ means that there exist positive constants $c_0,\, c_1$ independent of $h$ such that $c_0 b \le a \le c_1 b$.
	
	
\subsection{The Raviart-Thomas mixed finite element spaces.}
Let $\Pc^k$ denote the space of polynomials of total degree $\le k$ and let
$\Q^k$ denote the space of polynomials of degree $\le k$ in each variable.
We will make use of the Raviart-Thomas spaces for the construction of the 
spaces needed for the proposed method. The RT$_k$ spaces are defined for 
$k\ge 0$ on the reference cube as
	\begin{align}
	\Vh^k_{RT} (\Eh) = \begin{pmatrix}
	\Q^k + \Q^k \xh \\
	\Q^k + \Q^k \yh \\
	\Q^k + \Q^k \zh
	\end{pmatrix}, \quad 
	\hat{W}^k(\Eh) = \Q^k(\Eh).
	\end{align}
The definition on the reference square can be obtained naturally from
the one above.  
It holds that 
	\begin{align}
	\nablah\cdot \Vh^k_{RT}(\Eh) = \hat{W}^k(\Eh) \,\, \mbox{ and } \,\,
\vh\cdot\nh_{\hat{e}} \in \Qc^k(\eh) \quad \forall \vh \in \Vh^k_{RT}(\Eh), \,
\forall \eh\subset \partial \Eh. \label{rt-prop-0}
	\end{align}
The projection operator $\Pih^k_{RT}: H^1(\Eh,\R^d) \to
\Vh^k_{RT}(\Eh)$ satisfies
	\begin{align}
	& \mbox{for } k \ge 0, \quad \gnp[(\qh - \Pih^{k}_{RT}\qh)\cdot\n_{\eh}]{\hat{p}}_{\eh} = 0, \quad \forall \hat{p} \in \Qc^k(\eh), \forall \eh\subset \partial \Eh, \label{RT-dof-1} \\
& \mbox{for } k \ge 1, \quad \inp[\Pih^{k}_{RT}\qh - \qh]{\ph}_{\Eh} = 0, \quad 
\forall\ph\in 
	\begin{cases}
	\begin{pmatrix} \Pc^{k-1}(\xh)\otimes\Pc^{k}(\yh) \\ \Pc^{k-1}(\yh)\otimes\Pc^{k}(\xh) \end{pmatrix} \quad \mbox{in 2d},\\
	\begin{pmatrix} \Pc^{k-1}(\xh)\otimes\Qc^{k}(\yh,\zh) \\ \Pc^{k-1}(\yh)\otimes\Qc^{k}(\xh,\zh) \\
	\Pc^{k-1}(\zh)\otimes\Qc^{k}(\xh,\yh) \end{pmatrix} \quad \mbox{in 3d}. 
	\end{cases}\label{RT-dof-2}
	\end{align}
	The Raviart-Thomas spaces on any quadrilateral or hexahedral element $E\in\Tc_h$ are defined via the transformations
	\begin{align}
	\v \leftrightarrow \vh : \v = \frac{1}{J_E}DF_E\vh\circ F_E^{-1}, \quad w\leftrightarrow \wh : w = \wh\circ F_E^{-1}, \label{piola}
	\end{align}
where the contravariant Piola transformation is used for the velocity
space. Under this transformation, the normal components of the
velocity vectors on the facets are preserved. In particular 
\cite{brezzi1991mixed},
\begin{align}
	\forall \vh\in\Vh^k_{RT}(\Eh), \, 
\forall \wh\in\Wh^k(\Eh), 
\quad \inp[\dvrg\v]{w}_E = \inp[\nablah\cdot\vh]{\wh}_{\Eh}
\mbox{ and } \gnp[\v\cdot\n_{e}]{w}_e = \gnp[\vh\cdot\nh_{\eh}]{\wh}_{\eh},
\label{rt-prop-3}
	\end{align}
which imply
	\begin{align}
	\v\cdot\n_e = \frac{1}{J_e}\vh\cdot\nh_{\eh}, 
\quad \dvrg\v (\x) = \left( \frac{1}{J_E}\nablah\cdot\vh \right)\circ F_E^{-1}(\x). \label{rt-prop-4}
	\end{align}
The RT$_k$ spaces on $\Tc_h$ are given by
	\begin{align}
	\begin{aligned}
	\V^k_{RT,h} &= \big\{ \v\in \V: &&\v|_E \leftrightarrow \vh,\, \vh\in\Vh_{RT}^k(\Eh),\quad E\in\Tc_h \big\}, \\
	W^k_h &= \big\{ w\in W: &&w|_E \leftrightarrow \wh,\, \wh\in\Wh^k(\Eh),\quad E\in\Tc_h \big\}. \label{rt-prop-5}
	\end{aligned}
	\end{align}
Using the Piola transformation, we define a projection operator $\Pi^k_{RT}$ from $\V\cap H^1(\O,\R^d)$ onto $\V^k_{RT,h}$ satisfying on each element
	\begin{align}
	\Pi^k_{RT}\q \leftrightarrow \widehat{\Pi^k_{RT}\q},\quad \widehat{\Pi^k_{RT}\q} = \hat{\Pi}^k_{RT}\qh. \label{rt-prop-6}
	\end{align}
Using \eqref{rt-prop-4}, \eqref{RT-dof-1}-\eqref{RT-dof-2} and
\eqref{rt-prop-6}, it is straightforward to show that
$\Pi^k_{RT}\q\cdot\n$ is continuous across element facets, so 
$\Pi^k_{RT}\q \in H(\dvr;\O)$.
Similarly, one can see that $\Pi^k_{RT}\q\cdot\n = 0$ on $\Gn$ if
$\q\cdot\n = 0$ on $\Gn$, so $\Pi^k_{RT}\q \in \V^k_{RT,h}$.
Details of these arguments can be found in
\cite{brezzi1991mixed,wang1994mixed,wheeler2006multipoint,arnold2005quadrilateral,ingram2010multipoint}.
		
\subsection{Enhanced Raviart-Thomas finite elements.}
In this section we develop a new family of enhanced Raviart-Thomas
spaces, which is used in our method. We present the definitions of
shape functions and degrees of freedom and discuss their
unisolvency. The idea of the construction is to enhance the
Raviart-Thomas spaces with bubbles that are curls of specially chosen
polynomials, so that each component of the velocity vector is of
dimension $Q^k(\R^d)$ and the velocity degrees of freedom can be
associated with the points of a tensor-product Gauss-Lobatto
quadrature rule. 

\subsubsection{Shape functions.}
In this subsection we adopt a convention for compact notation that 
$w^{-1} = 0$ for a polynomial variable $w$ unless it is multiplied by $w$. 
For example, it holds that 
$\xh^{-1} (\xh, \yh, \xh^2 \zh)^T = (1, 0, \xh \zh)^T$.
For $k \ge 1$ and integers $d_1$, $d_2$, $d_3$, define 
\begin{align*}
	\Bb^k_1(\Eh) = \mbox{span} \left\{ \begin{pmatrix}  \xh^{d_1}\yh^{d_2}\zh^{d_3} \\ 0 \\ 0 \end{pmatrix}:  0\le d_1,d_2,d_3 \le k, d_2 = k \mbox{ or } d_3 = k \right\}, \\
	\Bb^k_2(\Eh) = \mbox{span} \left\{ \begin{pmatrix}  0 \\ \xh^{d_1}\yh^{d_2}\zh^{d_3} \\ 0 \end{pmatrix}: 0\le d_1,d_2,d_3 \le k, d_1 = k \mbox{ or } d_3 = k \right\}, \\
	\Bb^k_3(\Eh) = \mbox{span} \left\{ \begin{pmatrix}  0 \\ 0 \\ \xh^{d_1}\yh^{d_2}\zh^{d_3} \end{pmatrix}: 0\le d_1,d_2,d_3 \le k, d_1 = k \mbox{ or } d_2 = k \right\}
\end{align*}
on the reference element $\Eh$. 
While the above construction was done explicitly in 3d, it
translates naturally to 2d by omitting the $\zh$ terms. 
We now define the space $\Bb^k$ as
\begin{align}\label{defn-B}
	\Bb^k(\Eh) =\bigcup_{i=1}^d \Bb^k_i .
	\end{align}
It is clear from the above definition that $\Q^k(\Eh,\R^d) =
\Vh^{k-1}_{RT}(\Eh) \oplus \Bb^k(\Eh)$ in both 2d and 3d.
	
For $\qh\in \Bb^k(\Eh)$, we then consider $\nablah \times (\xbh \times
\qh)$. Here, we use the regular curl and cross product operators in 3d. 
The cross product applies to a 2d vector by
representing the vector as a 3d one, with zeroed out third component,
resulting in a scalar function, i.e., 
$\xbh \times \qh = \xh q_2 - \yh q_1$ for $\qh = (q_1,q_2)^T$.
In 2d, $\nablah \times$ applies to a scalar function $\phi$ 
by representing the scalar function as a 3d
vector with zero first and second components, and the first and second
components of the result is defined as $\nablah \times \phi$, i.e.,
$\nablah \times \phi = (- \partial_2\phi, \partial_1\phi)^T$.
Therefore, if $\qh = (q_1,0)^T$ with $q_1 = \xh^{a_1} \yh^{a_2}$, 
$$ 
\nablah\times(\xbh\times\qh) = 
\xh^{a_1-1}\yh^{a_2}\begin{pmatrix} (a_2+1)\xh \\ -a_1\yh \end{pmatrix} . $$
We are now ready 
to construct a space isomorphic to $\Bb^k(\Eh)$, which is
better suited for the analysis as well as for practical implementation. 
More precisely, we define 
{\blue
\begin{align*}
	\Bbt^k_i(\Eh) = \nablah \times (\xbh \times \Bb^k_i(\Eh)) , 
\quad i = 1, \dots, d , \quad \mbox{ and } \quad \Bbt^k (\Eh) = \cup_{i=1}^d \Bbt^k_i(\Eh) . 
\end{align*}
}
One can check that in 2d,
	\begin{align}
	\Bbt^k_1(\Eh) &= \mbox{span}\left\{\xh^{a_1-1}\yh^{a_2}\begin{pmatrix} (a_2+1)\xh \\ -a_1\yh \end{pmatrix}: 0\le a_1,a_2 \le k, a_2=k \right\},\label{bt-2d-1} \\
	\Bbt^k_2(\Eh) &= \mbox{span}\left\{\xh^{b_1}\yh^{b_2-1}\begin{pmatrix} -b_2\xh \\ (b_1+1)\yh \end{pmatrix}: 0\le b_1,b_2 \le k, b_1=k \right\}\label{bt-2d-2},
	\end{align}
and in 3d,
	\begin{align}
	\Bbt^k_1(\Eh) &= \mbox{span}\left\{\xh^{a_1-1}\yh^{a_2}\zh^{a_3}\begin{pmatrix} (a_2+a_3+2)\xh \\ -a_1\yh \\ -a_1\zh \end{pmatrix}:
0\le a_1,a_2,a_3 \le k,  a_2=k \mbox{ or } a_3=k \right\}, \label{bt-3d-1}\\
	\Bbt^k_2(\Eh) &= \mbox{span}\left\{\xh^{b_1}\yh^{b_2-1}\zh^{b_3}\begin{pmatrix} -b_2\xh \\ (b_1+b_3+2)\yh \\ -b_2\zh \end{pmatrix}:
0\le b_1,b_2,b_3 \le k,  b_1=k \mbox{ or } b_3=k \right\}, \label{bt-3d-2}\\
	\Bbt^k_3(\Eh) &= \mbox{span}\left\{\xh^{c_1}\yh^{c_2}\zh^{c_3-1}\begin{pmatrix} -c_3\xh \\ -c_3\yh \\ (c_1+c_2+2)\zh \end{pmatrix}:
0\le c_1,c_2,c_3 \le k,  c_1=k \mbox{ or } c_2=k \right\}. \label{bt-3d-3}
	\end{align}
We define the enhanced Raviart-Thomas space $\Vh^k(\Eh) $ as 
	\begin{align} 
	\Vh^k(\Eh) = \Vh_{RT}^{k-1}(\Eh) + \Bbt^k(\Eh).  \label{Vh}
	\end{align}
	\begin{theorem}
	The sum \eqref{Vh} is a direct sum, i.e., $\Vh^k(\Eh) = \Vh_{RT}^{k-1}(\Eh) \oplus \Bbt^k(\Eh)$, and 
	$\dim \Vh^{k}(\Eh) = \dim\Qc^k(\Eh,\R^d)$.
	\end{theorem}
	\begin{proof}
We will prove that the space $\Bbt^k(\Eh)$ is isomorphic to $\Bb^k(\Eh)$. 
It suffices to show that the map $\qh \mapsto \nablah\times(\xbh\times\qh)$ is
	injective on $\Bb^k(\Eh)$. To see it, suppose that a linear
	combination of the elements of \eqref{bt-3d-1}-\eqref{bt-3d-3} is
	zero. Note that all elements in each space of
	\eqref{bt-3d-1}-\eqref{bt-3d-3} have distinct polynomials
	degrees. Therefore, for a component of fixed degrees of $\xh$, $\yh$,
	$\zh$ in the linear combination, only one element of each space is
	used to generate the component. This implies that
	
	\begin{align*}
	\alpha \xh^{a_1-1}\yh^{a_2}\zh^{a_3}\begin{pmatrix} (a_2+a_3+2)\xh \\ -a_1\yh \\ -a_1\zh \end{pmatrix} + \beta \xh^{b_1}\yh^{b_2-1}\zh^{b_3}\begin{pmatrix} -b_2\xh \\ (b_1+b_3+2)\yh \\ -b_2\zh \end{pmatrix} + \gamma \xh^{c_1}\yh^{c_2}\zh^{c_3-1}\begin{pmatrix} -c_3\xh \\ -c_3\yh \\ (c_1+c_2+2)\zh \end{pmatrix} = 0,
	\end{align*}
	with some coefficients $\alpha,\beta,\gamma$ and 
	\begin{align}
	a_1 = b_1 + 1 = c_1 + 1, \quad b_2 = a_2+1 = c_2 + 1, \quad c_3 = a_3+1 = b_3+1. \label{powers}
	\end{align}
	We will prove that $\alpha=\beta=\gamma=0$. If $a_2 = k$, then
	$\beta=0$ due to $0\le a_i,b_i,c_i \le k$ and
	\eqref{powers}. Comparing the components of the above equation, we have
	\begin{align*}
	-\alpha a_1 - \gamma(a_3+1) = 0, \quad -\alpha a_1 + \gamma(a_1 + a_2 + 1) = 0,
	\end{align*}
	and therefore $\alpha=\gamma = 0$. Similarly, $\gamma = 0$ if $a_3 = k$ due to \eqref{powers}, and a similar argument gives
	\begin{align*}
	-\alpha a_1 - \beta(a_3+1) = 0, \quad -\alpha a_1 + \beta(a_1 + a_2 + 1) = 0,
	\end{align*}
which results in $\alpha = \beta = 0$. Since this argument holds for
any component of the same polynomial degrees, the map $\qh \mapsto
\nablah\times(\xbh\times\qh)$ is injective on $\Bb^k(\Eh)$, and therefore
it is an isomorphism from $\Bb^k(\Eh)$ to $\Bbt^k(\Eh)$. This implies that
the monomials in \eqref{bt-2d-1}--\eqref{bt-2d-2} and
\eqref{bt-3d-1}--\eqref{bt-3d-3} form a basis of $\Bbt^k(\Eh)$.  Note
that every element of $\Bbt_i^k(\Eh)$ in
\eqref{bt-2d-1}--\eqref{bt-3d-3} contains at least one entry with a
variable of degree $k+1$, therefore 
$\Vh^{k-1}_{RT}(\Eh) \cap \Bbt^k(\Eh) = \{0\}$, i.e., the sum
\eqref{Vh} is a direct sum. This implies that $\dim \Vh^{k}(\Eh) =
\dim\Qc^k(\Eh,\R^d)$.
\end{proof}

	\subsubsection{Degrees of freedoms and unisolvency.}
Using the definition \eqref{Vh} of $\Vh^k(\Eh)$ and the definitions of 
$\Vh^{k-1}_{RT}(\Eh)$ and $\Bbt^k(\Eh)$, we have that for $\qh \in \Vh^k(\Eh)$,
	\begin{align*}
	&\mbox{ in 2d: } q_1 \in \Pc^{k+1}(\xh)\otimes \Pc^{k}(\yh), \quad q_2 \in \Pc^{k+1}(\yh)\otimes \Pc^{k}(\xh), \\
	&\mbox{ in 3d: } q_1 \in \Pc^{k+1}(\xh)\otimes \Qc^{k}(\yh,\zh), \quad q_2 \in \Pc^{k+1}(\yh)\otimes \Qc^{k}(\xh,\zh), \quad q_3 \in \Pc^{k+1}(\zh)\otimes \Qc^{k}(\xh,\yh).
	\end{align*}
	For the degrees of freedom of $\Vh^k$ we consider the following moments:
	\begin{align}
&\mbox{for } k \ge 1, \quad \qh \mapsto \int_{\eh} \qh\cdot\nh_{\eh} \,\hat{p}, 
\quad \forall \hat{p}\in \Qc^k(\eh), \forall \eh \in \partial \Eh, \label{dof1}\\
&\mbox{for } k \ge 2, \quad \qh \mapsto \int_{\Eh} \qh \cdot \ph , \quad \forall \ph \in 
	\begin{cases}	
	\begin{pmatrix} \Pc^{k-2}(\xh)\otimes\Pc^{k}(\yh) \\ \Pc^{k-2}(\yh)\otimes\Pc^{k}(\xh) \end{pmatrix} \quad \mbox{in 2d},\\
	\begin{pmatrix} \Pc^{k-2}(\xh)\otimes\Qc^{k}(\yh,\zh) \\ \Pc^{k-2}(\yh)\otimes\Qc^{k}(\xh,\zh) \\
	\Pc^{k-2}(\zh)\otimes\Qc^{k}(\xh,\yh) \end{pmatrix} \quad \mbox{in 3d}.
	\end{cases} \label{dof2}
	\end{align}
	
	The number of degrees of freedom given by \eqref{dof1} and
	\eqref{dof2} are $2d(k+1)^{d-1}$ and $d(k-1)(k+1)^{d-1}$, respectively. Therefore the total number of
	DOFs is $d(k+1)^d$, which is same as the $\mbox{dim}\Qc^k(\Eh,\R^d)$. 
	We notice, that similarly to classical mixed
	finite elements such as the Raviart-Thomas or Brezzi-Douglas-Marini
	families of elements, the first set of moments \eqref{dof1} stands for
	facet DOFs, which will be required to be
	continuous across the facet. The second set of moments
	\eqref{dof2} represents interior DOFs, and no
	continuity requirements will be imposed on these. These
	new elements can be viewed as the Raviart-Thomas family with added
	bubbles, which are curls of specially chosen polynomials.
	\begin{theorem}
	Let $\Vh^k(\Eh)$ be defined as in \eqref{Vh}.
	For $\vh \in \Vh^k(\Eh)$ suppose that the evaluations of DOFs
	\eqref{dof1} and \eqref{dof2} are all zeros. Then $\vh = 0$.
	\end{theorem}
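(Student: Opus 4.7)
Since the total count of DOFs from \eqref{dof1} and \eqref{dof2} equals $d(k+1)^d=\dim\Qc^k(\Eh,\R^d)=\dim\Vh^k(\Eh)$, unisolvency reduces to injectivity. Assuming all DOFs vanish for some $\vh\in\Vh^k(\Eh)$, I will deduce $\vh=0$ in three steps.

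\emph{Step 1 (normal trace).} From the decomposition $\Vh^k(\Eh)=\Vh^{k-1}_{RT}(\Eh)\oplus\Bbt^k(\Eh)$ and the component descriptions stated right after \eqref{Vh}, each component $\hat v_i$ lies in $\Pc^{k+1}(\hat x_i)\otimes\Rc^k(\text{other variables})$; its restriction to a facet with $\hat x_i$ constant therefore belongs to $\Rc^k(\hat e)$. Since \eqref{dof1} pairs the normal trace with all of $\Rc^k(\hat e)$, its vanishing forces $\vh\cdot\nh=0$ on $\partial\Eh$.

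\emph{Step 2 (vanishing divergence).} Each element of $\Bbt^k(\Eh)=\nablah\times(\xbh\times\Bb^k(\Eh))$ is a curl and hence divergence-free, so $\nablah\cdot\vh\in\nablah\cdot\Vh^{k-1}_{RT}(\Eh)=\Qc^{k-1}(\Eh)$. For $\phi\in\Qc^{k-1}(\Eh)$ each partial $\partial_i\phi$ lies in $\Pc^{k-2}(\hat x_i)\otimes\Qc^{k-1}(\text{other})\subset\Pc^{k-2}(\hat x_i)\otimes\Rc^k(\text{other})$, so $\nablah\phi$ is a legitimate interior test vector in \eqref{dof2}. Integration by parts, combined with Step~1, yields $(\nablah\cdot\vh,\phi)_{\Eh}=0$ for every $\phi\in\Qc^{k-1}(\Eh)$; since $\nablah\cdot\vh$ itself sits in $\Qc^{k-1}(\Eh)$, we conclude $\nablah\cdot\vh=0$.

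\emph{Step 3 (conclusion).} Now $\vh$ is divergence-free with zero normal trace on the contractible $\Eh$, so $\vh=\nablah\times\hat\psi$ for some potential $\hat\psi$ (scalar in 2d, vector in 3d) vanishing suitably on $\partial\Eh$. In 2d, $\hat\psi\in\Qc^{k+1}(\Eh)$ with $\hat\psi|_{\partial\Eh}=0$ forces the factorization $\hat\psi=\hat x(1-\hat x)\hat y(1-\hat y)\hat\chi$ with $\hat\chi\in\Qc^{k-1}(\Eh)$; the structural constraint that the $v_1$ slot of $\Vh^k$ omits the $\hat x^{k+1}\hat y^k$ direction (inherited from the bubble decomposition) translates into $\hat\chi$ omitting the $\hat x^{k-1}\hat y^{k-1}$ direction. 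Applying the remaining interior DOFs and integrating by parts to move $\partial_{\hat y}$ (respectively $\partial_{\hat x}$) off $\hat\psi$ shows that $\hat\chi$ is orthogonal, under the positive weight $\hat x(1-\hat x)\hat y(1-\hat y)$, to the sum $\Pc^{k-2}(\hat x)\otimes\Pc^{k-1}(\hat y)+\Pc^{k-1}(\hat x)\otimes\Pc^{k-2}(\hat y)$---a subspace in which $\hat\chi$ itself already lives. The positive-definite weighted pairing then forces $\hat\chi=0$, hence $\vh=0$. The 3d case proceeds by an analogous vector-potential construction aligned with the discrete de Rham sequence implicit in the FEEC framework alluded to in the introduction.

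The main obstacle is Step~3 in three dimensions: identifying the correct enhanced-Nedelec-type polynomial space for the vector potential, fixing a suitable gauge, and matching the resulting factorization with the interior DOF test space requires a commuting-diagram argument that is far more transparent in 2d (where $\hat\psi$ is scalar) than in 3d.
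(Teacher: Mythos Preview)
Your Steps 1 and 2 are correct and essentially match what the paper does (the paper also shows $\nablah\cdot\vh=0$ via Green's identity against $\Qc^{k-1}$, though it does so after, not before, exploiting the interior DOFs). Your Step 3 in two dimensions is a clean and genuinely different argument: the stream-function factorization $\hat\psi=b\hat\chi$ with the quadratic bubble $b$, the observation that $\hat\chi$ misses the top monomial $\hat x^{k-1}\hat y^{k-1}$, and the weighted-orthogonality trick are all correct and give an elegant 2d proof.

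The gap is that you do not actually prove the 3d case. You explicitly flag it as ``the main obstacle,'' and the one-sentence appeal to ``an analogous vector-potential construction aligned with the discrete de Rham sequence'' is not a proof. In 3d the potential $\hat\psi$ is a vector field, determined only up to gradients, and pinning down a polynomial space for it that simultaneously (i) contains a gauge-fixed representative of every divergence-free $\vh\in\Vh^k(\Eh)$ with zero normal trace, (ii) encodes the ``missing monomial'' constraints coming from the bubble structure, and (iii) matches the interior DOF test space after integration by parts, is substantial work that you have not carried out. There is no obvious 3d analogue of the single scalar bubble factorization, and the tangential boundary condition $\hat\psi\times\nh=0$ does not by itself force a clean product form.

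The paper avoids this difficulty entirely. After Step 1 it uses the interior DOFs \eqref{dof2} directly to reduce each $\tilde v_i$ to the form $L_w^{k-1}(\hat x_i)\,w_i$, where $L_w^{k-1}$ is the degree-$(k-1)$ polynomial orthogonal to $\Pc^{k-2}$ with weight $(1-t^2)$. A monomial-tracking argument (using that the total degree in $\Vh^k$ is at most $3k$) then shows each $w_i\in\Qc^{k-1}$, after which $\nablah\cdot\vh=0$ and a final orthogonality against $\tilde L^k(\hat x_i)w_i$ (where $\tilde L^k=\frac{d}{dt}((1-t^2)L_w^{k-1})$) kill each $w_i$. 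This route is less conceptual than your stream-function idea, but it is dimension-uniform and complete.
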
 
	\begin{proof}
Without loss of generality, we present the proof for $\Eh = [-1,1]^d$.
We prove the theorem in 3d, while the 2d result can be obtained in the
same manner. 
From the definition of shape functions of $\Vh^k(\Eh)$,
$\vh\cdot \nh_{\eh}\in \Qc^k(\eh)$ for a face $\eh$ of
$\Eh$. Therefore, vanishing DOFs \eqref{dof1} imply that
	\begin{align}
	\vh = \begin{pmatrix} v_1\\v_2\\v_3 \end{pmatrix} = \begin{pmatrix} (1-\xh^2)\tilde{v}_1(\xh,\yh,\zh) \\ (1-\yh^2)\tilde{v}_2(\xh,\yh,\zh) \\ (1-\zh^2)\tilde{v}_3(\xh,\yh,\zh) \end{pmatrix}, \label{dof-red1}
	\end{align}
	with 
	$$ \tilde{v}_1 \in \Pc^{k-1}(\xh)\otimes \Qc^{k}(\yh,\zh), \quad \tilde{v}_2 \in \Pc^{k-1}(\yh)\otimes \Qc^{k}(\xh,\zh), \quad \tilde{v}_3 \in \Pc^{k-1}(\zh)\otimes \Qc^{k}(\xh,\yh). $$
	In addition, the vanishing DOFs \eqref{dof2} further reduce 
$\tilde{v}_i,\,i=1,2,3,$ to 
	\begin{align}
	\tilde{v}_1 = L_w^{k-1}(\xh)w_1(\yh,\zh), \qquad \tilde{v}_2 = L_w^{k-1}(\yh)w_2(\xh,\zh), \qquad \tilde{v}_3 = L_w^{k-1}(\zh)w_3(\xh,\yh), 
\label{dof-red2}
	\end{align}
where $w_1 \in \Qc^{k}(\yh,\zh)$, etc., and $L_w^{k-1}(t)$ is the monic
polynomial of degree $k-1$ on $[-1,1]$ orthogonal to
$\Pc^{k-2}(t)$ with weight $(1-t^2)$.  Since all monomials in
$\Vh^k(\Eh)$ are of degree $\le 3k$, $\yh^k\zh^k$ is not contained in
$w_1(\yh,\zh)$. Similar statements hold with $\zh^k \xh^k$, $\xh^k
\yh^k$ and $w_2(\xh, \zh)$, $w_3(\xh, \yh)$, respectively. Therefore
we can write
	\begin{align*}
	w_1(\yh, \zh) = \yh^k p_1(\zh) + \zh^k q_1(\yh) + \tilde{w}_1(\yh, \zh) , \qquad p_1 \in \Pc^{k-1} (\zh), q_1 \in \Pc^{k-1} (\yh),  \tilde{w}_1(\yh, \zh) \in \Qc^{k-1}(\yh, \zh),
	\end{align*}
and similar expressions are available for $w_2$ and $w_3$.  If $p_1
\not = 0$, $v_1$ has monomials with factor $\xh^{k+1} \yh^k$. From the
forms of $\Bbt^k_i(\Eh)$, $i=1,2,3$, this can be obtained only from a
linear combination of elements in $\Bbt^k_3(\Eh)$ with $c_1 = c_2 = k$.
However, a linear combination of elements in $\Bbt^k_3(\Eh)$ which
gives $\xh^{k+1}\yh^k p_1(\zh)$ in the first component also has the
third component $-(2k+2) \xh^k \yh^k P_1(\zh)$ where $P_1(\zh)$ is the
anti-derivative of $p_1(\zh)$ with $P_1(0) = 0$. All terms in $v_3$
having $\xh^k \yh^k$ as a factor are obtained only from
$\Bbt^k_3(\Eh)$. Furthermore, $v_3$ does not contain any terms with
factor $\xh^k \yh^k$ due to the form of $w_3$ we discussed, therefore
$P_1 = 0$ and $p_1 = 0$ as well. Applying a similar argument we can
conclude that $q_1 = 0$, so $w_1 \in \Qc^{k-1}(\yh, \zh)$. In addition, we
can show that $w_2 \in \Qc^{k-1}(\xh, \zh)$ and $w_3 \in
\Qc^{k-1}(\xh, \yh)$ by similar arguments. 

We now claim that
$\dvrg \vh = 0$. First, $\dvrg \vh \in \Qc^{k-1}(\Eh)$ holds from the
definition of the shape functions. Then the Green's identity and the
vanishing DOFs assumption give
\begin{align}\label{green}
\int_{\Eh} \dvrg \vh q \,d\xbh 
= \int_{\partial \Eh} \vh \cdot \n\, q \,d \hat s - \int_{\Eh} \vh \cdot \nabla q \,d\xbh = 0
\end{align}
	for any $q \in \Qc^{k-1}(\Eh)$. In particular $q = \dvrg \vh$ gives $\dvrg \vh = 0$. 
From the expression of $\vh$ in \eqref{dof-red2},
	\begin{align*}
	0 = \dvrg \vh = \tilde{L}^k(\xh) w_1(\yh, \zh) + \tilde{L}^k (\yh) w_2(\xh, \zh) + \tilde{L}^k (\zh) w_3(\xh, \yh) 
	\end{align*}
	where $\tilde{L}^k (t) = \frac{d}{dt} ((1-t^2) L_w^{k-1}(t))$. For $0 \le i \le k-1$, note that 
	\begin{align*}
	\int_{-1}^1 \tilde{L}^k(t) t^i \, dt = - i \int_{-1}^1 (1-t^2) L_w^{k-1}(t) t^{i-1} \,dt = 0 
	\end{align*}
by integration by parts and the definition of $L_w^{k-1}$. From this observation we can obtain
	\begin{align*}
0 = \int_{\Eh} (\dvrg \vh) \tilde{L}^k (\xh) w_1(\yh, \zh) \,d\xbh 
= \int_{\Eh} (\tilde{L}^k (\xh) w_1(\yh, \zh))^2 \,d\xbh,
	\end{align*}
	which implies $w_1 = 0$. We can conclude $w_2 = w_3 = 0$ with similar arguments, therefore $\vh = 0$.
	\end{proof}

\subsubsection{Mixed finite element spaces.}
For $k\ge 1$, consider the pair of mixed finite element spaces 
$\Vh^k(\Eh) \times \Wh^{k-1}(\Eh)$, recalling that 
\begin{align*}
	\Vh^k(\Eh) = \Vh^{k-1}_{RT}(\Eh)\oplus\Bbt^k(\Eh), \quad
	\Wh^{k-1}(\Eh) = \Qc^{k-1}(\Eh).
\end{align*}
Note that the construction of $\Vh^k(\Eh)$ and \eqref{rt-prop-0} imply that
\begin{align}\label{rt-prop-00}
\nablah\cdot\Vh^k(\Eh) = \Wh^{k-1}(\Eh),\quad \mbox{and} \quad
\forall\vh\in\Vh^k(\Eh),\, \forall\eh\subset\partial\Eh,\,
\vh\cdot\nh_{\eh}\in \Qc^k(\eh).
\end{align}
Recall also that $\mbox{dim}\Vh^k(\Eh) = \mbox{dim}\Qc^k(\Eh,\R^d) = d(k+1)^d$ 
and that its degrees of freedom are the moments \eqref{dof1} and \eqref{dof2}.
We consider an alternative definition of degrees of freedom involving the 
values of vector components at the Gauss-Lobatto quadrature
points; see Figure \ref{fig:2_1}, where filled arrows indicate the
facet degrees of freedom for which continuity across facets
is required, and unfilled arrows represent the "interior" degrees of
freedom, local to each element. We have omitted some of the degrees of
freedom from the backplane of the cube for clarity of
visualization. This choice gives certain orthogonalities for the
Gauss-Lobatto quadrature rule which we will discuss in details in the
forthcoming subsections.

 		\begin{figure}[ht!]
 			\centering
 			\begin{subfigure}[b]{0.25\textwidth}
 				\includegraphics[width=\textwidth]{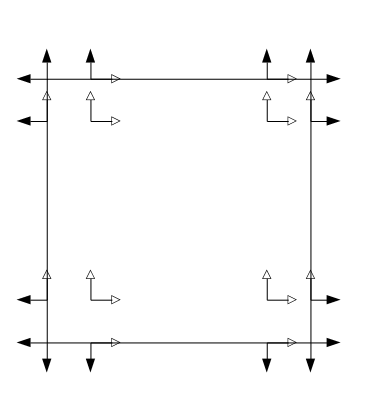}
 				\caption{$\Vh^3(\Eh)$ in 2d}\label{fig:2_1-1}
 			\end{subfigure}
 			\begin{subfigure}[b]{0.3\textwidth}
 				\includegraphics[width=\textwidth]{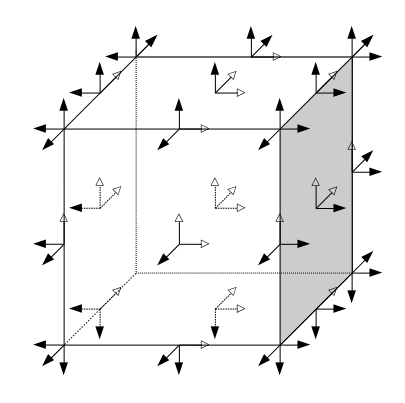}
 				\caption{$\Vh^2(\Eh)$ in 3d}\label{fig:2_1-2}
 			\end{subfigure}
 			\caption{Degrees of freedom of the enhanced Raviart-Thomas elements}\label{fig:2_1}
 		\end{figure}
		

The unisolvency of the enhanced Raviart-Thomas spaces shown in the
previous section implies the existence of a unique projection operator
$\Pih_*^k: H^1(\Eh,\R^d)\to\Vh^k(\Eh)$ such that
\begin{align}
&\mbox{for } k \ge 1, \quad \gnp[(\Pih_*^k\qh - \qh)\cdot\n_{\eh}]{\hat{p}}_{\eh} = 0, \quad
\forall \eh\subset \partial \Eh, \, 
\forall \hat{p}_k\in \Qc^k(\eh),\label{rte-prop-1} \\
&\mbox{for } k \ge 2, \quad \inp[\Pih_*^k\qh - \qh]{\ph }_{\Eh} = 0, \quad \forall\ph \in 
	\begin{cases}
	\begin{pmatrix} \Pc^{k-2}(\xh)\otimes\Pc^{k}(\yh) \\ \Pc^{k-2}(\yh)\otimes\Pc^{k}(\xh) \end{pmatrix} \quad \mbox{in 2d},\\
	\begin{pmatrix} \Pc^{k-2}(\xh)\otimes\Qc^{k}(\yh,\zh) \\ \Pc^{k-2}(\yh)\otimes\Qc^{k}(\xh,\zh) \\
	\Pc^{k-2}(\zh)\otimes\Qc^{k}(\xh,\yh) \end{pmatrix} \quad \mbox{in 3d}.
	\end{cases}\label{rte-prop-2}
	\end{align}
The Green's identity \eqref{green} together with \eqref{rte-prop-1} and 
\eqref{rte-prop-2} implies that 
\begin{align}\label{rte-prop-div}
	\inp[\nablah\cdot(\Pih_*^k\qh-\qh)]{\wh}_{\Eh} = 0, \quad \forall \wh\in\Wh^{k-1}(\Eh).
\end{align}
Using \eqref{rt-prop-3}, the above implies that
\begin{align}\label{rte-prop-div-E}
\inp[\nabla\cdot(\Pi_*^k\q-\q)]{w}_E = 0, \quad \forall w\in W^{k-1}(E).
\end{align}

Let $\V_h^k\times W_h^{k-1}$ be the pair of enhanced Raviart-Thomas
spaces on $\Tc_h$ defined as in \eqref{rt-prop-5} and the
projection operator $\Pi_*^k$ from $\V\cap H^1(\O,\R^d)$ onto $\V^k_h$
be defined via the Piola transformation as in \eqref{rt-prop-6}.
\begin{lemma}\label{lemma-inf-sup}
There exists a positive constant $\beta$, independent of $h$, such that
		\begin{align}
		\inf\limits_{w\in W_h^{k-1}}\sup\limits_{\q\in \V^k_h} \frac{\inp[\dvrg \q]{w}}{\|w\|\|\q\|_{\dvr}} \geq \beta . \label{inf-sup}
		\end{align}
	\end{lemma}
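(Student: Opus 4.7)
The plan is to use the standard Fortin criterion: to verify \eqref{inf-sup}, it suffices to exhibit, for every $w \in W_h^{k-1}$, a discrete $\q_h \in \V_h^k$ satisfying $\dvrg \q_h = w$ together with the stability bound $\|\q_h\|_{\dvr} \le C\|w\|$ with $C$ independent of $h$. Once such a $\q_h$ is produced, testing the supremum against it gives $\sup_{\q \in \V_h^k}(\dvrg\q,w)/\|\q\|_{\dvr} \ge \|w\|^2/(C\|w\|)$, and the infimum over $w$ yields $\beta = 1/C$.

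To produce $\q_h$, I would first lift $w$ to the continuous level. Namely, since $W_h^{k-1} \subset L^2(\O)$ and $\mathrm{meas}(\Gd) > 0$, solving the mixed-boundary Poisson problem $-\Delta \phi = w$ in $\O$ with $\phi = 0$ on $\Gd$ and $\nabla\phi\cdot\n = 0$ on $\Gn$ and setting $\q = -\nabla\phi$ yields $\q \in H^1(\O,\R^d)\cap \V$ with $\dvrg \q = w$ and $\|\q\|_1 \le C\|w\|$ by standard elliptic regularity on the polytopal domain $\O$. Then I would take $\q_h := \Pi_*^k \q$, which is well-defined by the construction of the projector in the previous subsection. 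The commuting property \eqref{rte-prop-div-E} gives $(\dvrg(\Pi_*^k\q - \q), v)_E = 0$ for all $v \in W_h^{k-1}|_E$, so summing over elements shows that $\dvrg \Pi_*^k \q$ is the $L^2$-projection of $w$ onto $W_h^{k-1}$; since $w$ is already in $W_h^{k-1}$, this projection is $w$ itself. Thus $\dvrg \q_h = w$.

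The remaining task, which is the main technical point, is the stability estimate $\|\Pi_*^k \q\|_{\dvr} \le C\|\q\|_1$. The divergence part is immediate from the commuting diagram: $\|\dvrg \Pi_*^k \q\| = \|w\| \le \|\dvrg \q\|$. For the $L^2$ part I would argue by scaling: on the reference element $\Eh$, since the degrees of freedom \eqref{dof1}--\eqref{dof2} are continuous linear functionals on $H^1(\Eh,\R^d)$ and $\Pih_*^k$ is unisolvent on the finite-dimensional space $\Vh^k(\Eh)$, one has the uniform bound $\|\Pih_*^k \qh\|_{\Eh} \le C\|\qh\|_{1,\Eh}$. Mapping back to a physical element $E$ via the Piola transformation \eqref{piola} and using the mesh bounds \eqref{mapping-bounds} leads, after the usual change of variables for $H^1$ functions on Piola-transformed quadrilateral/hexahedral elements, to $\|\Pi_*^k\q\|_E \le C(\|\q\|_E + h|\q|_{1,E})$. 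Summing over $\Tc_h$ and combining with $\|\dvrg \Pi_*^k \q\| \le \|\dvrg \q\|$ yields $\|\Pi_*^k \q\|_{\dvr} \le C\|\q\|_1 \le C\|w\|$, completing the Fortin argument.

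The step I expect to be the main obstacle is the scaling bound of $\Pi_*^k$ on physical quadrilaterals and (possibly nonplanar) hexahedra. Because the Piola transformation entwines with the Jacobian $J_E$ and $DF_E$ in a non-trivial way, proving the $H^1$-continuity of the interpolant with constants independent of the element shape requires the shape-regularity and quasi-uniformity of $\Tc_h$ assumed in Section 2.2, together with the uniform equivalences \eqref{mapping-bounds}; the arguments are analogous to those used for the RT and BDM families (see the references cited after \eqref{rt-prop-6}) but must be verified for the enhanced bubble part $\Bbt^k(\Eh)$ as well.
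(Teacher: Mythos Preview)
Your Fortin-operator strategy is the same as the paper's, and your treatment of the commuting property and of the stability bound for $\Pi_*^k$ (which the paper defers to the forward reference \eqref{div-bound}) is fine. The gap is in the lifting step. You solve the mixed Dirichlet--Neumann Poisson problem $-\Delta\phi = w$, set $\q = -\nabla\phi$, and invoke ``standard elliptic regularity on the polytopal domain $\O$'' to obtain $\q \in H^1(\O,\R^d)$, i.e.\ $\phi \in H^2(\O)$. But $H^2$-regularity is \emph{not} available in general on a polytope, and it is especially fragile under mixed boundary conditions: corner and Dirichlet--Neumann interface singularities can drop the regularity to $H^{1+s}$ with $s<1$. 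In that case $\q$ is only $H^s$, the face moments \eqref{dof1} need not make sense, and $\Pi_*^k\q$ is undefined; the argument then collapses before the scaling step.

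The paper avoids this by using a different lifting: it solves the \emph{divergence} equation $\dvrg\boldsymbol{\psi}=w$ in $\O$ with a prescribed Dirichlet trace $\boldsymbol{\psi}=\mathbf{g}$ on $\partial\O$, where $\mathbf{g}\in H^{1/2}(\partial\O,\R^d)$ is built so that $\mathbf{g}\cdot\n=0$ on $\Gn$ and the compatibility condition $\int_{\partial\O}\mathbf{g}\cdot\n=\int_\O w$ holds. By a Bogovski\u{\i}-type result (the paper cites Galdi), this underdetermined system has a solution $\boldsymbol{\psi}\in H^1(\O,\R^d)$ with $\|\boldsymbol{\psi}\|_1\le C\|w\|$ on any Lipschitz domain, with no convexity or smoothness assumption. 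That is exactly the $H^1$ regularity needed to apply $\Pi_*^k$, after which the rest of the argument proceeds as you outlined.
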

	\begin{proof}
We consider the auxiliary problem
\begin{align}
\dvrg\boldsymbol{\psi} = w\quad \mbox{in } \O, \quad
\boldsymbol{\psi} = \mathbf{g} \quad \mbox{on } \partial\O, \label{aux-1}
\end{align}
where $\mathbf{g}\in H^{1/2}(\partial\O,\R^d)$ is constructed such
that it satisfies $\int_{\partial\O}\mathbf{g}\cdot\n = \int_{\O}w$
and $\mathbf{g}\cdot \n = 0$ on $\Gn$. More specifically, we choose
$\mathbf{g} = (\int_{\partial\O}w)\phi\n$, where $\phi \in
C^0(\partial\O)$ is such that $\int_{\partial\O}\phi = 1$ and $\phi =
0$ on $\Gn$. Clearly, such construction implies
$\|\mathbf{g}\|_{1/2,\partial\O} \le C\|w\|$. It is known
\cite{galdi2011introduction} that the problem
\eqref{aux-1} has a solution satisfying
\begin{align}
\|\boldsymbol{\psi}\|_1 \le C\left( \|w\| 
+ \|\mathbf{g}\|_{1/2,\partial\O}\right) \le C\|w\|.
\end{align}
As the solution $\boldsymbol{\psi}$ is regular enough,
$\Pi_*^k\boldsymbol{\psi}$ is well defined. Using \eqref{rte-prop-div-E}, 
the choice $\q = \Pi_*^k\boldsymbol{\psi} \in \V^k_h$ yields
		\begin{align*}
		\inp[\dvrg \q]{w} = \inp[\dvrg\Pi_*^k\boldsymbol{\psi}]{w} = \inp[\dvrg\boldsymbol{\psi}]{w} = \|w\|^2.
		\end{align*}
We complete the proof by {\blue exploiting} the continuity bound
$ \|\Pi_*^k \boldsymbol{\psi}\|_{\dvr} \leq C\|\boldsymbol{\psi}\|_1$, 
which is stated in \eqref{div-bound} below.
\end{proof}
	We also note that since $\V_{RT,h}^{k-1}\subset \V_h^k$, it 
follows from the definition of $\Pi_{RT}^k$ that
\begin{align}
\nabla \cdot \v = \nabla \cdot \Pi^{k-1}_{RT}\v, 
\quad \forall \v \in \V^k_h \label{div-preserv},\\
\|\Pi^{k-1}_{RT}\v\| \leq C \|\v\|, \quad \forall \v \in \V^k_h \label{rt-cont}.
\end{align}
	
	
\subsection{Quadrature rule.}
We next present the quadrature rule for the velocity bilinear
form, which is designed to allow for local velocity elimination around
finite element nodes. We perform the integration on any element by
mapping to the reference element $\Eh$. The quadrature rule is defined
on $\Eh$. We have for $\q, \,\v\in\V^k_h$,
\begin{align*}
\int_E \Ki\q\cdot\v\,d\x &= \int_{\Eh} \hat{K}^{-1} 
\frac{1}{J_E}DF_E\qh\cdot\frac{1}{J_E}DF_E\vh\, J_Ed\xbh 
= \int_{\Eh}\frac{1}{J_E}DF_E^T\hat{K}^{-1}DF_E \qh\cdot\vh\,d\xbh 
\equiv \int_{\Eh}\Kci\qh\cdot\vh\,d\xbh,
\end{align*}
	where
	\begin{align}
	\mathcal{K} = J_EDF_E^{-1}\hat{K}(DF^{-1}_E)^{T}. \label{def-kci}
	\end{align}
It is straightforward to show that \eqref{perm-bounds} and \eqref{mapping-bounds} 
imply that
	\begin{align}
	\|\Kc\|_{0,\infty,\Eh} \sim h^{d-2}\|K\|_{0,\infty,E},\quad \|\Kci\|_{0,\infty,\Eh} \sim h^{2-d}\|\Ki\|_{0,\infty,E}. \label{perm-mapped-bounds}
	\end{align}
Let $\Xi_k := \{\xi_k(i)\}_{i=0}^{k}$ and $\Lambda_k :=
\{\lambda_k(i)\}_{i=0}^k$ be the points and weights of the
Gauss-Lobatto quadrature rule on $[-1,1]$.  If $k$ is clear in
context, we use $(p, q)_Q$ to denote the evaluation of Gauss-Lobatto
quadrature with $k+1$ points for $(p,q)$.  We also define
	\begin{align}
	\ph_{\bs{i}} := (\xi_k(\bs{i}_1), ..., \xi_k(\bs{i}_d)), 
\quad w_k(\bs{i}) := \lambda_k(\bs{i}_1) \cdots \lambda_k(\bs{i}_d) 
\qquad \text{for }\quad \bs{i} \in \Ic_k \equiv
\{ (\bs{i}_1, ..., \bs{i}_d), \,\, \bs{i}_j \in \{0,...,k\} \}. \label{r-lambda-def}
	\end{align}
	For the method of order $k$, the quadrature rule is defined on an element $E$ as follows
	\begin{align}
	\inp[\Ki\q]{\v}_{Q,E} \equiv \inp[\Kci\qh]{\vh}_{\hat{Q},\Eh} 
\equiv \sum_{\bs{i} \in \Ic_k} w_k({\bs{i}})\Kci(\ph_{\bs{i}})
\qh(\ph_{\bs{i}})\cdot\vh(\ph_{\bs{i}}) . \label{quad-rule-def}
	\end{align}
	The global quadrature rule can then be defined as
	\begin{align*}
	\inp[\Ki\q]{\v}_Q \equiv \sum_{E\in\Tc_h}\inp[\Ki\q]{\v}_{Q,E}.
	\end{align*}
Note that the method in the lowest order case $k=1$ is very similar in
nature to the one developed in
\cite{wheeler2006multipoint,ingram2010multipoint}, although we use
different finite element spaces.

We next show that the evaluation at the tensor-product quadrature points is a set of DOFs of
$\Vh^k(\Eh)$, so the bilinear form with the quadrature is not degenerate.
\begin{lemma} \label{quad-vanish}
For $p \in \Qc^k(\Eh)$, if the evaluations of $p$ vanish at all the
quadrature nodes of the tensor product Gauss--Lobatto rules on $\Eh$,
then $p=0$.
\end{lemma}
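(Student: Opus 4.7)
The claim is essentially a unisolvency statement: the $(k+1)^d$ tensor-product Gauss--Lobatto nodes form a unisolvent set for the $(k+1)^d$-dimensional space $\Qc^k(\Eh)$. My plan is to reduce to the one-dimensional case and then apply a straightforward tensor-product induction on the spatial dimension.

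First I would recall the one-dimensional fact: the Gauss--Lobatto quadrature rule on $[-1,1]$ with $k+1$ points $\Xi_k = \{\xi_k(0),\dots,\xi_k(k)\}$ has $k+1$ distinct nodes (they include the endpoints $\pm 1$ and the $k-1$ interior zeros of $P'_k$). Consequently, any univariate polynomial $q \in \Pc^k([-1,1])$ that vanishes at every point of $\Xi_k$ is identically zero, since a nonzero polynomial of degree $\le k$ has at most $k$ roots.

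Next I would carry out the tensor-product argument. Writing $p \in \Qc^k(\Eh)$ as $p(\xh_1,\dots,\xh_d)$, I fix any $(d-1)$-tuple of indices $(\bs{i}_2,\dots,\bs{i}_d) \in \{0,\dots,k\}^{d-1}$ and consider the univariate slice
\[
q_{\bs{i}_2,\dots,\bs{i}_d}(\xh_1) := p(\xh_1,\xi_k(\bs{i}_2),\dots,\xi_k(\bs{i}_d)) \in \Pc^k([-1,1]).
\]
By the hypothesis, this slice vanishes at each $\xh_1 = \xi_k(\bs{i}_1)$ for $\bs{i}_1 = 0,\dots,k$, so the 1D result forces $q_{\bs{i}_2,\dots,\bs{i}_d} \equiv 0$. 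Since this is true for every choice of the trailing indices, a standard induction on the coordinate (or equivalently, expanding $p$ in the tensor-product Lagrange basis associated with $\Xi_k^d$) shows $p \equiv 0$.

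No step is genuinely hard; the only thing to be careful about is making sure the $(k+1)$-point Gauss--Lobatto rule is used (matching the dimension count $\dim \Qc^k(\Eh) = (k+1)^d$) rather than a rule with fewer nodes, which would fail to be unisolvent. Once the one-dimensional distinctness of the Gauss--Lobatto abscissae is in hand, the rest is a routine tensor-product reduction.
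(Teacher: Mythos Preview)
Your proposal is correct and is precisely the unisolvency argument the paper has in mind; the paper does not give a proof but simply declares the statement ``obvious, because the evaluations at the tensor product quadrature nodes become a set of DOFs of $\Qc^k(\Eh)$.'' Your reduction to the one-dimensional case (distinct Gauss--Lobatto abscissae force a degree-$k$ polynomial with $k+1$ roots to vanish) followed by the tensor-product induction is the standard way to justify exactly this DOF claim, so there is no substantive difference in approach---you have merely spelled out what the paper leaves implicit.
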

	
The above statement is obvious, because the evaluations at the tensor product quadrature nodes 
become a set of DOFs of $\Qc^k(\Eh)$.
\begin{lemma}
For $\qh\in\Vh^k(\Eh)$, if $\qh(\ph_{\bs{i}}) = 0$ 
for all $\ph_{\bs{i}}$ in \eqref{r-lambda-def}, then $\qh = 0$. 
\end{lemma}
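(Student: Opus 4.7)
The plan is to reduce the hypothesis to the canonical form \eqref{dof-red2} used in the preceding unisolvency theorem, together with $\nablah\cdot\qh = 0$, after which that earlier proof's endgame can be invoked verbatim.

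\emph{Step 1 (canonical form).} I would expand the first component as $q_1 = \sum_{j=0}^{k+1} a_j(\yh,\zh)\,\xh^j$ with $a_j \in \Qc^k(\yh,\zh)$. For each fixed pair $(\xi_k(i_2),\xi_k(i_3))$ of Gauss--Lobatto nodes, the slice $q_1(\,\cdot\,,\xi_k(i_2),\xi_k(i_3)) \in \Pc^{k+1}(\xh)$ vanishes at all $k+1$ Gauss--Lobatto nodes in $\xh$, so it lies in the one-dimensional kernel spanned by $L(\xh) := \prod_{i=0}^k(\xh - \xi_k(i))$. Matching the leading coefficient identifies the multiplier as $a_{k+1}(\xi_k(i_2),\xi_k(i_3))$, and matching lower-order coefficients yields $a_j(\xi_k(i_2),\xi_k(i_3)) = l_j\,a_{k+1}(\xi_k(i_2),\xi_k(i_3))$ with $L = \sum_j l_j\xh^j$. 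Applying Lemma~\ref{quad-vanish} to $a_j - l_j a_{k+1} \in \Qc^k(\yh,\zh)$ promotes each pointwise equality to a polynomial identity, giving $q_1 = L(\xh)\,w_1(\yh,\zh)$ with $w_1 \in \Qc^k(\yh,\zh)$, and analogously for $q_2,q_3$. Since $L$ vanishes at $\pm 1$ while its interior roots coincide with those of $L_w^{k-1}$, $L(t)$ is proportional to $(1-t^2)L_w^{k-1}(t)$, putting $\qh$ exactly in the form~\eqref{dof-red2}.

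\emph{Step 2 (divergence-free).} Because $L(\pm 1) = 0$, Step~1 gives $\qh\cdot\nh = 0$ on every facet, so Green's identity \eqref{green} collapses $\int_{\Eh}(\nablah\cdot\qh)\,q\,d\xbh$ to $-\int_{\Eh}\qh\cdot\nablah q\,d\xbh$ for every $q \in \Qc^{k-1}(\Eh)$. A per-variable degree count puts $q_j\,\partial_j q$ in $\Qc^{2k-1}(\Eh)$ (each $q_j$ has degree $\le k+1$ in its own variable and $\le k$ in the others, while $\partial_j q$ has degree $\le k-2$ in variable $j$ and $\le k-1$ otherwise), which is exactly the range of exactness of the $(k+1)$-point tensor-product Gauss--Lobatto rule; the integral therefore equals its quadrature sum, which vanishes by hypothesis. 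Taking $q = \nablah\cdot\qh \in \Qc^{k-1}(\Eh)$ (see \eqref{rt-prop-00}) then forces $\nablah\cdot\qh \equiv 0$.

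\emph{Step 3 (endgame).} With $\qh$ in the form \eqref{dof-red2} and divergence-free, the tail of the preceding unisolvency proof applies line by line: the bound on total degree $\le 3k$ in $\Vh^k(\Eh)$ together with the explicit shapes of \eqref{bt-3d-1}--\eqref{bt-3d-3} forces $w_j \in \Qc^{k-1}$, and the orthogonality $\int_{-1}^{1}\tilde L^k(t)\,t^i\,dt = 0$ for $0\le i\le k-1$ combined with $\nablah\cdot\qh = 0$ forces each $w_j \equiv 0$, so $\qh = 0$. I expect the main technical obstacle to be the coefficient-matching in Step~1, where the one-dimensional kernel structure of univariate polynomial evaluation must be lifted to a tensor-product identity via Lemma~\ref{quad-vanish}; Step~2 rests only on the degree count and Gauss--Lobatto exactness, and Step~3 is a direct citation of the earlier argument.
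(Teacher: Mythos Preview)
Your argument is correct, but it reaches the canonical form \eqref{dof-red2} by a different route than the paper. The paper's proof proceeds by first showing that the vanishing quadrature assumption forces all of the moment DOFs \eqref{dof1} and \eqref{dof2} to vanish: the facet DOFs \eqref{dof1} go to zero because $\qh\cdot\nh_{\eh}\in\Qc^k(\eh)$ vanishes at all facet nodes (Lemma~\ref{quad-vanish}), and then an expansion of each $\tilde q_i$ in the weighted Legendre basis $\{L_w^j\}$, combined with the $(2k{-}1)$-exactness of the Gauss--Lobatto rule in the normal variable, kills the coefficients $r_0,\dots,r_{k-2}$, leaving exactly the form \eqref{dof-red2} and hence vanishing interior DOFs \eqref{dof2}. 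The unisolvency theorem is then cited wholesale.

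Your Step~1 shortcuts this by a direct univariate root-counting argument: each one-variable slice lies in the one-dimensional kernel $\operatorname{span}\{L\}$, and Lemma~\ref{quad-vanish} promotes the nodewise identities to global ones, giving $q_i = L(x_i)\,w_i$ with $L\propto(1-t^2)L_w^{k-1}$ in one stroke. Because you bypass the DOFs, you must establish $\nablah\cdot\qh=0$ separately (your Step~2), whereas the paper inherits this from the Green's identity \eqref{green} inside the unisolvency proof once the DOFs are known to vanish. Your route is a little more elementary (no Legendre expansion) at the cost of needing Step~2 explicitly; the paper's route has the conceptual advantage of reducing the lemma to the already-proved unisolvency statement via the moment DOFs. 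Both converge on the same endgame from \eqref{dof-red2} onward.
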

\begin{proof}
Without loss of generality, we present the proof for $\Eh = [-1,1]^d$.
It suffices to show that the vanishing quadrature evaluation
assumption implies that the moments in \eqref{dof1} and
\eqref{dof2} vanish. Since $\qh \cdot \n_e \in \Qc^k(e) \,\, \forall \, e \subset
\partial \Eh$, the vanishing quadrature assumption for nodes on $e$ implies 
that $\qh \cdot \n_e = 0$. Therefore the moments in \eqref{dof1} vanish and 
$\qh$ is reduced to the form in \eqref{dof-red1}, i.e.,
	\begin{align*}
	\qh = \begin{pmatrix} q_1\\q_2\\q_3 \end{pmatrix} = \begin{pmatrix} (1-\xh^2)\tilde{q}_1(\xh,\yh,\zh) \\ (1-\yh^2)\tilde{q}_2(\xh,\yh,\zh) \\ (1-\zh^2)\tilde{q}_3(\xh,\yh,\zh) \end{pmatrix}, 
	\end{align*}
	with
	$$ \tilde{q}_1 \in \Pc^{k-1}(\xh)\otimes \Qc^{k}(\yh,\zh), \quad \tilde{q}_2 \in \Pc^{k-1}(\yh)\otimes \Qc^{k}(\xh,\zh), \quad \tilde{q}_3 \in \Pc^{k-1}(\zh)\otimes \Qc^{k}(\xh,\yh). $$
We want to show that all moments \eqref{dof2} of $\qh$ are zeros.
To do it, we first express $\tilde{q}_1$ as
	\begin{align}
	\tilde{q}_1 = \sum_{j=0}^{k-1} L_w^{j}(\xh) r_j(\yh, \zh), \qquad  r_j(\yh, \zh) \in \Qc^k(\yh,\zh), \label{q1-expan}
	\end{align}
where $L_w^j$ is the Legendre polynomial of degree $j$ with weight
$(1-\xh^2)$ as before. For fixed $\yh$ and $\zh$, let us consider the
Gauss-Lobatto quadrature of $q_1 v$ along $\xh$ with $v \in
\Pc^{k-2}(\xh)$. For fixed values of $\yh$ and $\zh$, $q_1$ is a
polynomial of degree $\le k+1$, so this quadrature evaluation of $q_1
v$ equals the integration of $q_1 v$ in $\xh$ with the fixed $\yh$ and
$\zh$. In particular, if $v = L_w^m(\xh)$, $0 \le m \le k-2$, $\yh =
\xi_k(i)$, $\zh = \xi_k(j)$, then the vanishing quadrature assumption
and the expression of $\tilde{q}_1$ in \eqref{q1-expan} give
	\begin{align*}
	0 = \sum_{l=0}^k \lambda_k(l) q_1 (\xi_k(l), \xi_k(i), \xi_k(j)) v(\xi_k(l)) = \int_{-1}^1 q_1 (\xh , \xi_k(i), \xi_k(j)) v(\xh))\, d\xh = \int_{-1}^1 (1-\xh^2) (L_w^m(\xh))^2 r_m(\xi_k(i), \xi_k(j)).
	\end{align*}
	This implies that $r_m(\yh, \zh ) = 0$ for any $\yh = \xi_k(i)$, $\zh = \xi_k(j)$, $0 \le i,j \le k$ if $0 \le m \le k-2$, and therefore $r_m = 0$ for $0 \le m \le k-2$ by Lemma~\ref{quad-vanish}. As a consequence, $q_1 = (1-\xh^2) L_w^{k-1}(\xh) r_{k-1}(\yh, \zh)$ with $r_{k-1} \in \Qc^k(\yh, \zh)$ and its evaluations at the DOFs given by the first component in \eqref{dof2} vanish. We can derive similar results for $q_2$ and $q_3$, i.e., $\qh$ gives only vanishing moments for the DOFs \eqref{dof2}. We can conclude that $\qh = 0$ by the same argument as in the previous proof of unisolvency.
	\end{proof}

The above result allows us to define a set of DOFs of $\Vh^k(\Eh)$ as
the evaluations of the vectors at the tensor-product quadrature points
$\ph_{\bs{i}}$, $\bs{i} \in \Ic_k$. Examples were given in
Figure~\ref{fig:2_1}. Recall that for points on $\partial\Eh$, some of
the vector components are facet degrees of freedom for which
continuity across facets is required, while some are "interior"
degrees of freedom, local to each element. For convenience of
notation, denote the set of points $\ph_{\bs{i}}$ by $\ph_i$,
$i=1,\dots,n_k$, $n_k = (k+1)^d$. Any vector $\qh(\ph_i)$ at the node
$\ph_i$ is uniquely determined by its $d$ components evaluated at
this node. Since we chose the Gauss-Lobatto (or trapezoid, when $k=1$)
quadrature points for the construction of the velocity degrees of freedom, 
we are guaranteed to have $d$
orthogonal DOFs associated with each node (quadrature point) $\ph_i$,
and they uniquely determine the nodal vector $\qh(\ph_i)$. More
precisely,
\begin{align}
\qh(\ph_i) = \sum_{j=1}^{d} (\qh\cdot\nh_{ij})(\ph_i)\nh_{ij},
\end{align}
where $\nh_{ij},\,j=1,\dots,d$, are the outward unit normal vectors to
the $d$ hyperplanes of dimension $(d-1)$ that intersect at $\ph_i$,
each one parallel to one of the three mutually orthogonal
facets of the reference element. Denote the velocity basis functions
associated with $\ph_i$ by $\vh_{ij},\,j=1,\dots,d$, i.e.,
\begin{align}\label{basis}
	(\vh_{ij}\cdot\nh_{ij})(\ph_i) = 1,\quad (\vh_{ij}\cdot\nh_{im})(\ph_i) = 0,\, m\neq j,\:\mbox{and}\: (\vh_{ij}\cdot\nh_{lm})(\ph_l) = 0, l\neq i,\, m=1,\dots,d.
\end{align}
The quadrature rule \eqref{quad-rule-def} couples only $d$ basis
functions associated with a node. For example, in 3d, for any node 
$i=1,\dots,n_k$,
	\begin{align}
	&\inp[\Kci\vh_{i1}]{\vh_{i1}}_{\hat{Q},\Eh} = \Kci_{11}(\ph_i)w_k(i) , 
\quad \inp[\Kci\vh_{i1}]{\vh_{i2}}_{\hat{Q},\Eh} = \Kci_{21}(\ph_i)w_k(i),
\nonumber \\
	&\inp[\Kci\vh_{i1}]{\vh_{i3}}_{\hat{Q},\Eh} = \Kci_{31}(\ph_i)w_k(i), 
\quad \inp[\Kci\vh_{i1}]{\vh_{mj}}_{\hat{Q},\Eh} = 0 \quad \forall mj 
\neq i1,i2,i3. \label{local}
	\end{align}
By mapping back \eqref{quad-rule-def} to the physical element $E$, we obtain
	\begin{align}
	\inp[\Ki\q]{\v}_{Q,E} = \sum_{i=1}^{n_k}J_E(\ph_i)w_k(i)\Ki(\p_i)\q(\p_i)\cdot\v(\p_i). \label{quad-def-phys}
	\end{align}
	
	Denote the element quadrature error by
	\begin{align}
	\s_E\inp[K^{-1}\q]{\v} \equiv \inp[\Ki\q]{\v}_E - \inp[\Ki\q]{\v}_{Q,E},
	\end{align}
	and define the global quadrature error by $\sigma\inp[\Ki\q]{\v}\big|_E = \s_E\inp[K^{-1}\q]{\v}$. Similarly, denote the quadrature error on the reference element by
	\begin{align}
	\hat{\s}_E\inp[\Kci\qh]{\vh} \equiv \inp[\Kci\qh]{\vh}_{\Eh} - \inp[\Kci\qh]{\vh}_{\hat{Q},\Eh}.
	\end{align}
	The following lemma will be used to bound the quadrature error.
	\begin{lemma}
		For any $\qh\in\Vh^k(\Eh)$ and for any $k \ge 1$,
		\begin{align}
		\inp[\qh - \Pih^{k-1}_{RT}\qh]{\vh}_{\hat{Q},\Eh} = 0, 
\quad\mbox{for all vectors } \vh\in\Qc^{k-1}(\Eh,\R^d).\label{exactness-quad}
		\end{align}
	\end{lemma}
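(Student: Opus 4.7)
I would decouple the quadrature componentwise: at each node, $(\qh - \Pih^{k-1}_{RT}\qh)(\ph_i)\cdot\vh(\ph_i) = \sum_d u_d(\ph_i)v_d(\ph_i)$, where $u_d$ is the $d$-th component of $\qh - \Pih^{k-1}_{RT}\qh$, so that $(\qh - \Pih^{k-1}_{RT}\qh,\vh)_{\hat Q,\Eh} = \sum_d (u_d,v_d)_{\hat Q}$ with the scalar Gauss--Lobatto quadrature on the right. It therefore suffices to prove the scalar identity $(u_1,v_1)_{\hat Q} = 0$ for every $v_1\in\Qc^{k-1}(\Eh)$; the remaining components follow by the obvious symmetric argument.

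The first component satisfies $u_1\in\Pc^{k+1}(\xh)\otimes\Rc^k(\mathbf y)$ (with $\mathbf y=\yh$ in 2d and $\mathbf y=(\yh,\zh)$ in 3d), while $v_1\in\Qc^{k-1}(\Eh)$ has degree at most $k-1$ in every variable. The key step is the decomposition
\[
v_1 = \tilde p(\xh,\mathbf y) + \alpha(\mathbf y)\,L_w^{k-1}(\xh),\qquad \tilde p\in\Pc^{k-2}(\xh)\otimes\Rc^{k-1}(\mathbf y),\ \alpha\in\Rc^{k-1}(\mathbf y),
\]
possible because $\Pc^{k-1}(\xh) = \Pc^{k-2}(\xh)\oplus\mathrm{span}\{L_w^{k-1}\}$. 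This peels off the single direction ($\xh$) in which $u_1 v_1$ can reach total degree $2k$ and in which the tensor-product $(k+1)$-node Gauss--Lobatto rule, exact only through degree $2k-1$ per variable, can fail.

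For the $\tilde p$ summand, $u_1\tilde p$ has degree at most $2k-1$ in every variable, so the quadrature is exact: $(u_1,\tilde p)_{\hat Q} = \int_{\Eh} u_1\tilde p\,d\xbh$, and this integral vanishes by the interior moment condition \eqref{RT-dof-2} (with $k$ replaced by $k-1$) applied to the test vector carrying $\tilde p$ in the first slot and zeros elsewhere. For the $\alpha L_w^{k-1}$ summand, the defining property that the interior Gauss--Lobatto nodes on $[-1,1]$ are exactly the roots of $L_w^{k-1}$ collapses the $\xh$-sum to the two endpoints:
\[
(u_1,\alpha L_w^{k-1})_{\hat Q} = \sum_{s\in\{-1,1\}} \lambda_k(s)\,L_w^{k-1}(s)\,\bigl\langle u_1(s,\cdot),\alpha\bigr\rangle_{\hat Q_{\mathbf y}},
\]
where $\langle\cdot,\cdot\rangle_{\hat Q_{\mathbf y}}$ denotes the lower-dimensional Gauss--Lobatto quadrature in $\mathbf y$. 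Since $u_1(\pm 1,\mathbf y)\in\Rc^k(\mathbf y)$ and $\alpha\in\Rc^{k-1}(\mathbf y)$, this reduced quadrature is exact and equals $\int u_1(\pm 1,\mathbf y)\,\alpha(\mathbf y)\,d\mathbf y$, which vanishes by the facet moment condition \eqref{RT-dof-1} applied with $\hat p=\alpha$.

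The main obstacle is identifying the right decomposition of $v_1$: one must peel off precisely the direction in which Gauss--Lobatto loses exactness, and do so using $L_w^{k-1}$, whose zeros align with the interior quadrature nodes so that the residual boundary terms line up with the facet Raviart--Thomas degrees of freedom. With this alignment in hand, the rest is a two-step exactness-plus-moment-condition reduction, and the argument transfers verbatim between 2d and 3d.
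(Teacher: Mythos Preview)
Your proof is correct and takes a genuinely different route from the paper's. The paper decomposes the \emph{projection error} $u_1 = (\qh - \Pih^{k-1}_{RT}\qh)_1$: using the constraints \eqref{RT-dof-1}--\eqref{RT-dof-2} it expands $u_1$ in standard Legendre polynomials and shows it has the form $(L^{k-1}(\xh)-L^{k+1}(\xh))p_1(\mathbf y) + L^k(\yh)\,u_1(\xh,\zh) + L^k(\zh)\,w_1(\xh,\yh)$; it then argues separately that each summand yields zero under the quadrature against any $q\in\Qc^{k-1}$, the first via the identity $(L^{k+1}-L^{k-1})(t) = \tfrac{2k+1}{k(k+1)}(t^2-1)\frac{d}{dt}L^k(t)$ (so it vanishes at all Gauss--Lobatto nodes), the others via one-dimensional exactness and orthogonality of $L^k$. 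By contrast, you decompose the \emph{test function} $v_1$ along $\Pc^{k-1}(\xh)=\Pc^{k-2}(\xh)\oplus\mathrm{span}\{L_w^{k-1}\}$, and then the two RT moment conditions dispatch the two pieces directly, once you use that the interior Gauss--Lobatto nodes are precisely the zeros of $L_w^{k-1}$. Your argument is shorter and more self-contained: it requires no structural analysis of $\Vh^k(\Eh)$ beyond the crude inclusion $u_1\in\Pc^{k+1}(\xh)\otimes\Rc^k(\mathbf y)$ and no Legendre recurrence identities. The paper's approach, on the other hand, extracts a rather explicit normal form for $\qh - \Pih^{k-1}_{RT}\qh$ that could be of independent interest.
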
 
	\begin{proof}
Without loss of generality, we present the proof for $\Eh = [-1,1]^d$.
	We show a detailed proof only for the 3d case because the 2d case is similar. Let $v_i$, $i=1,2,3$ be the $i$-th component of $\qh - \Pih^{k-1}_{RT}\qh$.
	Considering the expression $v_1$ with the basis of Legendre polynomials, the definition of shape functions in $\Vh^k(\Eh)$ and the constraints from \eqref{RT-dof-2} yield that $v_1$ has the form
	\begin{align}
	v_1 = L^{k-1}(\xh) p_1(\yh, \zh) + L^k(\xh) q_1(\yh, \zh) + L^{k+1}(\xh) r_1(\yh, \zh) + L^k(\yh) u_1 (\xh, \zh) + L^k(\zh) w_1 (\xh, \yh) 
	\end{align}
	where $L^i$ is the standard $i$-th Legendre polynomial as before, $p_1, q_1, r_1 \in \Qc^{k-1}(\yh, \zh)$, 
	\begin{align}
	u_1 \in \Pc^{k+1}(\xh) \otimes \Pc^{k-1} (\zh) + \Qc^k(\xh, \zh) , \qquad w_1 \in \Pc^{k+1}(\xh) \otimes \Pc^{k-1} (\yh) + \Qc^k(\xh, \yh) . \label{u1-w1-express}
	\end{align}
	From \eqref{RT-dof-1}, the restrictions of $v_1$ on $\xh = -1$ and on $\xh = 1$ are orthogonal to $\Qc^{k-1}(\yh, \zh)$, and it gives two equations 
	\begin{align}
	p_1 + q_1 + r_1 = 0, \qquad p_1 - q_1 + r_1 = 0, 
	\end{align}
	therefore $q_1 = 0$ and $r_1 = -p_1$. A similar argument can be applied to $v_2$ and $v_3$. In summary, we have 
	\begin{align}
	v_1 &= (L^{k-1}(\xh) - L^{k+1}(\xh)) p_1(\yh, \zh) +  L^k(\yh) u_1 (\xh, \zh) + L^k(\zh) w_1 (\xh, \yh) , \\
	v_2 &= (L^{k-1}(\yh) - L^{k+1}(\yh)) p_2(\zh, \xh) +  L^k(\zh) u_2 (\xh, \yh) + L^k(\xh) w_2 (\yh, \zh) , \\
	v_3 &= (L^{k-1}(\zh) - L^{k+1}(\zh)) p_3(\xh, \yh) +  L^k(\xh) u_3 (\yh, \zh) + L^k(\yh) w_3 (\yh, \zh), 
	\end{align}
where $u_2$, $u_3$, $w_2$, $w_3$ belong to polynomial spaces similar to the spaces 
in \eqref{u1-w1-express} with variable permutation. 
To prove $(v_1, q)_{\hat{Q}, \Eh} = 0$ for $q \in \Qc^{k-1}(\Eh)$, we will show 
	\begin{align}
	((L^{k-1}(\xh) - L^{k+1}(\xh)) p_1(\yh, \zh), q)_{\hat{Q}, \Eh} = 0, \quad ( L^k(\yh) u_1 (\xh, \zh), q)_{\hat{Q}, \Eh} = 0, \quad (L^k(\zh) w_1 (\xh, \yh) , q)_{\hat{Q}, \Eh} = 0 . \label{Qhat-vanish}
	\end{align}
For the first equality, recall that the quadrature points of the
Gauss-Lobatto rules are the two endpoints and the zeros of
$\frac{d}{dt} L^k(t)$ in $[-1, 1]$. It is clear that $L^{k-1} -
L^{k+1}$ vanishes at the two endpoints. In addition, $L^{k-1} - L^{k+1}$
vanishes at the zeros of $\frac{d}{dt} L^k(t)$ in $[-1,1]$ from the
identities
	\begin{align*}
	(k+1) (L^{k+1} - L^{k-1})(t) = (2k+1) (t L^k(t) - L^{k-1}(t)) = (2k+1) \frac{t^2-1}{k} \frac{d}{dt} L^k(t) . 
	\end{align*}
Therefore, the first equality in \eqref{Qhat-vanish} holds. To prove
the second equality in \eqref{Qhat-vanish}, let us consider a
restriction of the tensor product Gauss-Lobatto rule for fixed
quadrature points of $\xh$ and $\zh$.  For fixed $\xh$ and $\zh$, the
product $L^k(\yh) u_1(\xh, \zh) q(\xh, \yh, \zh)$ is a polynomial in
$\yh$ of degree at most $2k-1$, so evaluation of $L^k(\yh) u_1(\xh,
\zh) q(\xh, \yh, \zh)$ with the restricted Gauss-Lobatto rule is the same
as the integration of the function in $\yh$. However, this integration
in $\yh$ is zero because $L^k(\yh)$ and $q \in \Qc^{k-1}(\xh, \yh,
\zh)$ are orthogonal. Since $(\cdot, \cdot)_{\hat{Q}, \Eh}$ is a sum
of these restricted Gauss-Lobatto rules, $( L^k(\yh) u_1 (\xh, \zh),
q)_{\hat{Q}, \Eh} = 0$. The third equality in \eqref{Qhat-vanish}
follows from the same argument as the second equality. Finally,
the same argument can be used for $v_2$ and $v_3$, so the assertion 
is proved.
\end{proof}

	\subsection{The $k$-th order MFMFE method.}

We first define an appropriate projection to be used in the method for
the Dirichlet boundary data $g$. This is necessary for optimal
approximation of the boundary condition term. Moreover, the numerical
tests suggest that this is not a purely theoretical artifact, as
without the projection we indeed see a deterioration in the rates of
convergence.  For a facet $\eh \in \partial \Eh$, let $\hat\Rc^{k-1}_{\eh}$ be
the $L^2(\eh)$-orthogonal projection onto $\Qc^{k-1}(\eh)$, satisfying for any
$\hat \phi \in L^2(\eh)$, 
$$
\gnp[\hat\phi - \hat\Rc^{k-1}_{\eh}\hat\phi]{\wh}_{\eh} = 0 \quad \forall \, \wh \in \Qc^{k-1}(\eh).
$$
Let $\Rc^{k-1}_h : L^2(\partial\O) \to W_h^{k-1}|_{\partial\O}$ be such 
that for any
$\phi \in L^2(\partial\O)$, $\Rc^{k-1}_h \phi = \hat\Rc^{k-1}_{\eh}\hat\phi 
\circ F^{-1}_E$ on all $e \in \partial\O$. Recall that, c.f. \eqref{rt-prop-0},
if $\vh \in \Vh^{k-1}_{RT}(\Eh)$, then $\vh\cdot\nh_{\hat{e}} \in 
\Qc^{k-1}(\eh)$ for all $\eh\subset \partial \Eh$. Then, using 
\eqref{RT-dof-1} and \eqref{rt-prop-3}, we have that
\begin{equation}\label{R-prop-1}
\forall \phi \in L^2(\partial\O), \quad 
\gnp[\phi - \Rc^{k-1}_h \phi]{\v\cdot\n}_{\partial\O} = 0 \quad 
\forall \v \in \Vh^{k-1}_{RT}(\Eh)
\end{equation}
and
\begin{equation}\label{R-prop-2}
\forall \v \in H^1(\O,\R^d), \quad 
\gnp[(\v - \Pi_{RT}^{k-1}\v)\cdot\n]{\Rc^{k-1}_h \phi}_{\partial\O} = 0 \quad
\phi \in L^2(\partial\O).
\end{equation}
The method is defined as follows: find $(\u_h,p_h) \in \V^k_h\times
W^{k-1}_h$, where $k\ge 1$, such that
	\begin{align}
	\inp[\Ki\u_h]{\v}_Q - \inp[p_h]{\dvrg\v} &=  - \gnp[\Rc^{k-1}_h g]{\v\cdot\n}_{\Gd}, \quad \v\in\V^k_h, \label{mfmfe-1}\\
	\inp[\dvrg\u_h]{w} &= \inp[f]{w}, \quad w\in W_h^{k-1}. \label{mfmfe-2}
	\end{align}
Following the terminology from
\cite{wheeler2006multipoint,ingram2010multipoint} we call the method
\eqref{mfmfe-1}-\eqref{mfmfe-2} a $k$-th order MFMFE method, due to
its relation to the MPFA scheme.
	
In order to prove that the method stated above has a unique solution, we first present several useful results.
	\begin{lemma}\label{lem-q-sim-qhat}
		If $E\in\Tc_h$ and $\q\in L^2(E,\R^d)$, then
		\begin{align}
		\|\q\|_E \sim h^{\frac{2-d}{2}}\|\qh\|_{\Eh}. \label{q-sim-qhat}
		\end{align}
	\end{lemma}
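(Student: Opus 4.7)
The plan is to prove both directions of the equivalence by a direct change of variables between $E$ and $\Eh$ using the Piola transformation \eqref{piola} together with the mapping estimates \eqref{mapping-bounds}. Writing $\q(\x) = \frac{1}{J_E(\xbh)} DF_E(\xbh)\,\qh(\xbh)$ with $\x = F_E(\xbh)$, the standard change of variable $d\x = J_E(\xbh)\,d\xbh$ gives
\begin{equation*}
\|\q\|_E^2 = \int_{\Eh} \frac{1}{J_E(\xbh)}\, |DF_E(\xbh)\,\qh(\xbh)|_{\R^d}^2 \, d\xbh .
\end{equation*}

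For the upper bound on $\|\q\|_E$, I would apply $\|DF_E\|_{0,\infty,\Eh} \sim h$ to the numerator and the pointwise lower bound $J_E(\xbh) \ge C\, h^d$ to the denominator. The latter is an immediate consequence of $\|J_{F^{-1}_E}\|_{0,\infty,E} \sim h^{-d}$ in \eqref{mapping-bounds} combined with the reciprocal identity $J_E(\xbh) = 1/J_{F^{-1}_E}(F_E(\xbh))$. This yields $|DF_E\qh|_{\R^d}^2/J_E \le C\, h^{2-d} |\qh|_{\R^d}^2$ pointwise, and integrating over $\Eh$ produces $\|\q\|_E \le C\, h^{(2-d)/2} \|\qh\|_{\Eh}$.

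For the reverse inequality, I would invert the Piola transformation as $\qh(\xbh) = J_E(\xbh)\, DF_E^{-1}(\x)\, \q(\x)$ and change variables from $\Eh$ back to $E$ using $d\xbh = J_{F^{-1}_E}(\x)\,d\x = d\x/J_E(\xbh)$, obtaining
\begin{equation*}
\|\qh\|_{\Eh}^2 = \int_E J_E(\xbh)\, |DF_E^{-1}(\x)\,\q(\x)|_{\R^d}^2\, d\x .
\end{equation*}
Then the bounds $\|J_E\|_{0,\infty,\Eh} \sim h^d$ and $\|DF_E^{-1}\|_{0,\infty,E} \sim h^{-1}$ from \eqref{mapping-bounds} directly give $\|\qh\|_{\Eh}^2 \le C\, h^{d-2} \|\q\|_E^2$, which is equivalent to the matching bound $h^{(2-d)/2}\|\qh\|_{\Eh} \le C\|\q\|_E$.

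There is essentially no real obstacle here: the entire argument is Jacobian bookkeeping, and each ``$\sim$'' in \eqref{mapping-bounds} already carries both an upper and a lower bound. The only minor subtlety to spell out is the pointwise lower estimate on $J_E$ extracted from the reciprocal identity $J_E\cdot J_{F^{-1}_E}\equiv 1$; once this is noted, both directions reduce to a single termwise estimate inside the integral.
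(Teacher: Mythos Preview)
Your proposal is correct and follows essentially the same route as the paper: two change-of-variable identities via the Piola map and its inverse, then termwise application of the bounds in \eqref{mapping-bounds}. Your explicit remark that the pointwise lower bound $J_E \ge c\,h^d$ comes from $\|J_{F_E^{-1}}\|_{0,\infty,E}\sim h^{-d}$ via the reciprocal identity is a nice clarification that the paper leaves implicit.
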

	\begin{proof}
		The statement of the lemma follows from \eqref{piola}:
		\begin{align*}
		\int_{E}\q\cdot\q\,d\x &= \int_{\Eh} \frac{1}{J_E}DF_E \qh\cdot \frac{1}{J_E}DF_E\qh\,J_E d\xbh, \\
		\int_{\Eh}\qh\cdot\qh\,d\xbh &= \int_{E} \frac{1}{J_{F_E^{-1}}}DF_E^{-1} \q\cdot \frac{1}{J_{F_E^{-1}}}DF_E^{-1}\q\, J_{F_E^{-1}}d\x,
		\end{align*}
		and bounds \eqref{mapping-bounds}.
	\end{proof}
	\begin{lemma} \label{inner-prod-norm}
The bilinear form $\inp[\Ki\q]{\v}_{Q}$ is an inner product on
$\V_h^k$ and $\inp[\Ki\q]{\q}_{Q}^{1/2}$ is a norm in
$\V_h^k$ equivalent to $\|\cdot\|$.
	\end{lemma}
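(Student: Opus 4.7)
The plan is to verify the two defining properties of an inner product (symmetry and positive definiteness) and then establish the two-sided bound against $\|\cdot\|$ with constants independent of $h$. Symmetry is immediate from the symmetry of $K^{-1}$ (and hence of $\Kci$, which is $\frac{1}{J_E}DF_E^T\hat K^{-1}DF_E$), since the quadrature form is just a weighted sum of $\Kci(\ph_{\bs{i}})\qh(\ph_{\bs{i}})\cdot\vh(\ph_{\bs{i}})$ with positive weights. Positive definiteness reduces to the preceding uniqueness lemma: because $\Kci(\ph_{\bs{i}})$ is symmetric positive definite and $w_k(\bs{i})>0$, vanishing of $\inp[\Ki\q]{\q}_{Q,E}$ forces $\qh(\ph_{\bs{i}})=0$ at every Gauss--Lobatto node, and the previous lemma then gives $\qh=0$ on $\Eh$, hence $\q=0$ on $E$.

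For the norm equivalence I would work on the reference element first. The map $\qh\mapsto\bigl(\sum_{\bs{i}\in\Ic_k} w_k(\bs{i})|\qh(\ph_{\bs{i}})|^2\bigr)^{1/2}$ is a norm on the finite-dimensional space $\Vh^k(\Eh)$ by the lemma just cited, so by equivalence of norms on a finite-dimensional space there exist constants $0<c_1\le c_2<\infty$, depending only on $k$ and $\Eh$, with
\begin{equation*}
c_1\|\qh\|_{\Eh}^2 \le \sum_{\bs{i}\in\Ic_k} w_k(\bs{i})|\qh(\ph_{\bs{i}})|^2 \le c_2\|\qh\|_{\Eh}^2, \qquad \forall\,\qh\in\Vh^k(\Eh).
\end{equation*}
Combining this with the eigenvalue estimate $\xi^T\Kci(\xbh)\xi\sim h^{2-d}|\xi|^2$ for $\xbh\in\Eh$, which follows from \eqref{perm-bounds} and \eqref{mapping-bounds} applied to $\Kci=\frac{1}{J_E}DF_E^T\hat K^{-1}DF_E$, yields
\begin{equation*}
\inp[\Ki\q]{\q}_{Q,E}=\inp[\Kci\qh]{\qh}_{\hat Q,\Eh}\sim h^{2-d}\|\qh\|_{\Eh}^2.
\end{equation*}
Finally I invoke Lemma~\ref{lem-q-sim-qhat}, $\|\q\|_E\sim h^{(2-d)/2}\|\qh\|_{\Eh}$, to convert the right-hand side to $\|\q\|_E^2$ up to constants independent of $h$. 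Summing over $E\in\Tc_h$ gives the desired two-sided bound $\inp[\Ki\q]{\q}_Q\sim \|\q\|^2$.

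The only nontrivial point is keeping track of the powers of $h$: the quadrature form scales like $h^{2-d}$ on the reference element (through $\Kci$), the $L^2$-norm on $E$ also scales like $h^{(2-d)/2}$ relative to the reference $L^2$-norm via the Piola mapping, and these two scalings match exactly so that all $h$-dependence cancels, leaving constants that depend only on the shape-regularity of $\Tc_h$, the bounds $k_0,k_1$ on $K$, and the polynomial degree $k$. No new ingredient beyond what has already been set up in the preceding subsections is needed.
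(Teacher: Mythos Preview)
Your proof is correct and follows essentially the same strategy as the paper: finite-dimensional norm equivalence on the reference element combined with the eigenvalue scaling of $\Kci$ and Lemma~\ref{lem-q-sim-qhat}. The paper's argument differs only cosmetically, establishing the lower bound via the physical-element quadrature formula \eqref{quad-def-phys} and a basis expansion $\q=\sum_{i,j}q_{ij}\v_{ij}$ on $E$, while you handle both bounds uniformly on $\Eh$; the ingredients and the $h$-bookkeeping are the same.
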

	\begin{proof}
Let $\q\in \V_h^k$ be given on an element $E$ as
 $\q= \sum_{i=1}^{n_k}\sum_{j=1}^d q_{ij}\v_{ij}$. Using
\eqref{perm-bounds}, \eqref{mapping-bounds}, \eqref{quad-def-phys}, and
the basis property \eqref{basis}, we obtain
		\begin{align*}
		\inp[\Ki\q]{\q}_{Q,E} =  \sum_{i=1}^{n_k}J_E(\ph_i)w_k(i)\Ki(\p_i)\q(\p_i)\cdot\q(\p_i) \geq C h^d\sum_{i=1}^{n_k}\sum_{j=1}^d q_{ij}^2.
		\end{align*}
		On the other hand,
		\begin{align*}
		\|\q\|_{E}^2 = \inp[\sum_{i=1}^{n_k}\sum_{j=1}^d q_{ij}\v_{ij}]{\sum_{k=1}^{n_k}\sum_{l=1}^d q_{kl}\v_{kl}} \leq   C h^d\sum_{i=1}^{n_k}\sum_{j=1}^d q_{ij}^2.
		\end{align*}
		Hence,
		\begin{align}
		\inp[\Ki\q]{\q}_{Q} \geq C \|\q\|^2, \label{q_norm_ineq_1}
		\end{align}
and due to the linearity and symmetry, we conclude that
$\inp[\Ki\q]{\v}_Q$ is an inner product and
$\inp[\Ki\q]{\q}^{1/2}_{Q}$ is a norm in $\V_h^k$.  Using
\eqref{perm-bounds},\eqref{perm-mapped-bounds} \eqref{quad-rule-def},
\eqref{q-sim-qhat}, and the equivalence of norms on $\Eh$,
we obtain
		\begin{align}
		\inp[\Ki\q]{\q}_{Q,E} = \sum_{\bs{i} \in \Ic_k} w_k({\bs{i}})\Kci(\ph_{\bs{i}})\qh(\ph_{\bs{i}})\cdot\qh(\ph_{\bs{i}}) \leq Ch^{2-d}\|\qh\|^2_{\Eh} \leq C \|\q\|^2_E. \label{q_norm_ineq_2}
		\end{align}
Combining \eqref{q_norm_ineq_1} and \eqref{q_norm_ineq_2} results in the equivalence of norms
		 \begin{align}
		 c_0\|\q\| \leq \inp[\Ki\q]{\q}^{1/2}_{Q} \leq c_1\|\q\|. \label{norm-equiv}
		 \end{align}
	\end{proof}
We now proceed with the solvability of the method \eqref{mfmfe-1}-\eqref{mfmfe-2}.
	\begin{theorem}\label{well-posedness}
The $k$-th order MFMFE method \eqref{mfmfe-1}-\eqref{mfmfe-2} has a unique solution for any $k\ge 1$.
	\end{theorem}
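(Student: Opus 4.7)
My plan is to exploit the fact that \eqref{mfmfe-1}--\eqref{mfmfe-2} is a square finite-dimensional linear system, so it suffices to prove uniqueness. I would consider the homogeneous problem (set $f=0$ and $g=0$) and show that the only solution is $(\u_h, p_h) = (0,0)$; standard linear algebra then gives existence of a solution for arbitrary data.

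The first step is to test \eqref{mfmfe-1} with $\v = \u_h \in \V_h^k$ and \eqref{mfmfe-2} with $w = p_h \in W_h^{k-1}$, then add the two equations. The mixed terms $\inp[p_h]{\dvrg \u_h}$ cancel, leaving $\inp[\Ki \u_h]{\u_h}_Q = 0$. By Lemma~\ref{inner-prod-norm}, the quadrature bilinear form defines a norm on $\V_h^k$ equivalent to $\|\cdot\|$ (see \eqref{norm-equiv}), so $\u_h = 0$. Substituting back into \eqref{mfmfe-1} with $\u_h=0$ and $g=0$ yields $\inp[p_h]{\dvrg \v} = 0$ for every $\v \in \V_h^k$.

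Finally, I would invoke the inf-sup condition established in Lemma~\ref{lemma-inf-sup}: since $\beta \|p_h\| \le \sup_{\q \in \V_h^k} \inp[\dvrg \q]{p_h}/\|\q\|_{\dvr} = 0$, we conclude $p_h = 0$. This finishes uniqueness, hence well-posedness.

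The argument is essentially routine once the two pillars are in place, so there is no real obstacle left in this theorem itself; the genuine work has already been done upstream, namely in verifying that $\inp[\Ki\cdot]{\cdot}_Q$ is coercive on $\V_h^k$ (which required the unisolvency of the enhanced Raviart--Thomas DOFs at Gauss--Lobatto nodes) and in establishing the discrete inf-sup condition (which relied on the commuting-type property \eqref{rte-prop-div-E} of the projection $\Pi_*^k$ together with the divergence bound \eqref{div-bound}).
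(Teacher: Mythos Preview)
Your proposal is correct and follows essentially the same approach as the paper: reduce to uniqueness for the homogeneous system, use the coercivity from Lemma~\ref{inner-prod-norm} (equation~\eqref{norm-equiv}) to conclude $\u_h=0$, and then apply the discrete inf-sup condition of Lemma~\ref{lemma-inf-sup} to obtain $p_h=0$. The paper's proof is identical in substance, only phrasing the final step by plugging $\inp[\dvrg\q]{p_h} = \inp[\Ki\u_h]{\q}_Q$ directly into the inf-sup supremum rather than first observing $\inp[p_h]{\dvrg\v}=0$.
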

	\begin{proof}
Since \eqref{mfmfe-1}-\eqref{mfmfe-2} is a square system, it is enough
to prove uniqueness of the solution. Letting $f=0,\,g=0$ and choosing
$\v=\u_h$ and $w=p_h$, one immediately obtains $\inp[\Ki\u_h]{\u_h}_Q
= 0$, which yields $\u_h = 0$ due to \eqref{norm-equiv}. Next, we use
the inf-sup condition \eqref{inf-sup} to obtain
		\begin{align*}
		\|p_h\| \leq C\sup\limits_{\q\in \V^k_h} 
\frac{\inp[\dvrg \q]{p_h}}{\|\q\|_{\dvr}} = \sup\limits_{\q\in \V^k_h} \frac{\inp[\Ki\u_h]{\q}_Q}{\|\q\|_{\dvr}} = 0
		\end{align*}
		and thus $p_h = 0$, which concludes the proof of the theorem.
	\end{proof}
	
\subsection{Reduction to a pressure system and its stencil.}	
In this section we describe how the MFMFE method reduces to a system
for the pressures by local velocity elimination. Recall that the DOFs
of $\Vh^k(\Eh)$ are chosen as the $d$ vector components at the
tensor-product Gauss-Lobatto quadrature points, see Figure
\ref{fig:2_1}.  As a result, in the velocity mass matrix obtained from
the bilinear form $(K^{-1} \u_h,\v)$, the $d$ DOFs associated with a
quadrature point in an element $E$ are completely decoupled from other
DOFs in $E$, see \eqref{local}. Due to the continuity of normal components
across facets, there are couplings with DOFs from neighboring
elements. We distinguish three types of velocity couplings. The first
involves localization of degrees of freedom around each vertex in the
grid. Only this type occurs in the lowest order case $k=1$, similar to
the previously developed lowest order MFMFE method
\cite{wheeler2006multipoint,ingram2010multipoint}. The number of DOFs
that are coupled around a vertex equals the number of facets $n_v$
that share the vertex. For example, on logically rectangular grids,
$n_v=12$ (faces) in 3d and $n_v=4$ (edges) in 2d. The second type of
coupling is around nodes located on facets, but not at vertices. In
2d, these are edge DOFs. The number of coupled DOFs is three - one
normal to the edge, which is continuous across the edge, and two
tangential to the edge, one from each of the two neighboring
elements. In 3d, there are two cases to consider for this type of
coupling. One case is for nodes located on faces, but not on edges. In
this case the number of coupled DOFs is five - one normal to the face,
which is continuous across the face, and four tangential to the face,
two from each of the two neighboring elements. The second case in 3d
is for nodes located on edges, but not at vertices. Let $n_e$ be the
number of elements that share the edge, which also equals the number
of faces that share the edge. In this case the number of coupled DOFs
is $2n_e$. These include $n_e$ DOFs normal to the $n_e$ faces, which
are continuous across the faces, and $n_e$ DOFs tangential to the
edge, one per each of the $n_e$ neighboring elements. For example, on
logically rectangular grids, $n_e = 4$, resulting in eight coupled
DOFs. Finally, the third type of coupling involves nodes interior to
the elements, in which case only the $d$ DOFs associated with the node
are coupled. 

Due to the localization of DOF interactions described above, the
velocity mass matrix obtained from the bilinear form $(K^{-1}
\u_h,\v)$, is block-diagonal with blocks associated with the
Gauss-Lobatto quadrature points. In particular, in 2d, there are $n_v
\times n_v$ blocks at vertices ($n_v$ is the number of neighboring
edges), $3 \times 3$ blocks at edge points, and $2 \times 2$ blocks at
interior points. In 3d, there are $n_v \times n_v$ blocks at vertices
($n_v$ is the number of neighboring faces), $2n_e \times 2n_e$ blocks
at edge points ($n_e$ is the number of neighboring elements), $5
\times 5$ blocks at face points, and $3 \times 3$ blocks at interior
points.

\begin{proposition}\label{reduct-prop-1}
The local matrices described above are symmetric and positive definite.
\end{proposition}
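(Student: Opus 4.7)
The plan is to exploit the quadrature-point localization already established in the paragraphs preceding the proposition. By \eqref{quad-def-phys} together with the basis property \eqref{basis}, at each Gauss--Lobatto node $\p_i$ the local block $M_{\p_i}$ coming from the bilinear form $\inp[K^{-1}\u_h]{\v}_Q$ acts on a coefficient vector $\mathbf{c}=(c_a)$ according to
\begin{equation*}
\mathbf{c}^{T} M_{\p_i} \mathbf{c} \;=\; \sum_{E \ni \p_i} J_E(\ph_i)\,w_k(i)\; K^{-1}(\p_i)\,\q_E(\p_i)\cdot\q_E(\p_i),
\end{equation*}
where $\q_E(\p_i) = \sum_{a} c_a\,\v_a(\p_i)|_E$ and the sum over $a$ runs over the DOFs at $\p_i$ whose basis function is supported on $E$; facet DOFs contribute to both elements sharing the facet, whereas interior DOFs appear in only one element.

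Symmetry of $M_{\p_i}$ is immediate from the symmetry of $K^{-1}$ and the symmetric appearance of the two arguments in the quadrature rule. Non-negativity of each summand follows from $w_k(i)>0$, from $J_E(\ph_i)>0$ guaranteed by \eqref{mapping-bounds}, and from the SPD property of $K^{-1}$ in \eqref{perm-bounds}. Hence $M_{\p_i}$ is at least positive semidefinite, and the remaining task is the implication $\mathbf{c}^{T} M_{\p_i}\mathbf{c}=0 \Rightarrow \mathbf{c}=\mathbf{0}$.

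If $\mathbf{c}^{T} M_{\p_i}\mathbf{c}=0$, then every summand vanishes, forcing $\q_E(\p_i)=0$ for each $E\ni\p_i$. By the basis property \eqref{basis} and the Piola transform $\v=\frac{1}{J_E}DF_E\vh$, for each such $E$ the vectors $\{\v_{ij}(\p_i)|_E\}_{j=1}^d$ equal $\frac{1}{J_E(\ph_i)}DF_E(\ph_i)\nh_{ij}$ and thus form a basis of $\R^d$, since $DF_E(\ph_i)$ is invertible and the reference normals $\nh_{ij}$ are orthonormal. Consequently the $d$ coefficients $c_a$ attached to the DOFs supported on $E$ must all vanish. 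Iterating over all $E\ni\p_i$ and using the bookkeeping of the three coupling types (vertex, facet-interior, edge-interior in 3d, and cell-interior) guarantees that every DOF contributing to $M_{\p_i}$ is supported on at least one such $E$, which yields $\mathbf{c}=\mathbf{0}$.

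The main obstacle is not the linear-algebra step but the combinatorial accounting: for each of the block shapes enumerated before the proposition, the $n_v\times n_v$ block at a vertex, the $3\times 3$ block at a 2d facet-interior point, the $5\times 5$ block at a 3d face-interior point, the $2n_e\times 2n_e$ block at a 3d edge-interior point, and the $d\times d$ block at a cell-interior point, one must verify that the DOFs indexed by $\mathbf{c}$ are precisely the DOFs of all elements $E\ni\p_i$ evaluated at $\p_i$. Once this identification is made, the argument collapses to the invertibility of $DF_E(\ph_i)$ together with the SPD of $K^{-1}$.
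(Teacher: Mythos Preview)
Your argument is correct, but it takes a different route from the paper. The paper's proof is a one-liner: since Lemma~\ref{inner-prod-norm} already shows that $\inp[\Ki\cdot]{\cdot}_Q$ is an inner product on $\V_h^k$, the local block $M_{\p_i}$ is simply the Gram matrix of the (linearly independent) basis functions $\v_1,\dots,\v_m$ associated with $\p_i$, hence automatically symmetric and positive definite. You instead bypass Lemma~\ref{inner-prod-norm} and argue directly from \eqref{quad-def-phys}, reducing definiteness to the invertibility of $DF_E(\ph_i)$ and the SPD property of $K^{-1}$, followed by a case-by-case accounting over the coupling types. Both approaches rest on the same underlying facts ($J_E>0$, $w_k(i)>0$, \eqref{perm-bounds}, \eqref{basis}), but the paper's version is shorter because it reuses the global coercivity result, while yours is more self-contained at the cost of the combinatorial bookkeeping you flag in your last paragraph. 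That bookkeeping is indeed routine: in every case each element $E\ni\p_i$ contributes exactly $d$ DOFs at $\p_i$, and your linear-independence step handles them all uniformly.
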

\begin{proof}
For any quadrature point, the local matrix is obtained by taking $\v =
\v_1,\dots,\v_m$ in \eqref{mfmfe-1}, where $\v_i$ are the velocity
basis functions associated with that point. We have
\begin{align*}
\inp[\Ki\u_h]{\v_i}_Q = \sum_{j=1}^m u_j \inp[\Ki\v_j]{\v_i} 
\equiv \sum_{j=1}^m a_{ij}u_j, \quad i=1,\dots,m.
\end{align*}
Using Lemma \ref{inner-prod-norm} we conclude that the matrix 
$M = \{a_{ij}\}$ is symmetric and positive definite.
\end{proof}
	
The block-diagonal structure of the velocity mass matrix allows for
local velocity elimination. In particular, solving the local linear
systems resulting from \eqref{mfmfe-1} allows us to express the
associated velocities in terms of the pressures from the neighboring
elements and boundary data. This implies that the method reduces the
saddle-point problem to an element-based pressure system.

\begin{lemma}
The pressure system resulting from \eqref{mfmfe-1}-\eqref{mfmfe-2}
using the procedure described above is symmetric and positive
definite.
\end{lemma}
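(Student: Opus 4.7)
The plan is to work with the matrix form of the saddle-point system \eqref{mfmfe-1}--\eqref{mfmfe-2}, perform the Schur-complement reduction that corresponds algebraically to the local velocity elimination described above, and then read off symmetry and positive definiteness from two facts already established: Proposition \ref{reduct-prop-1} (block-wise SPD structure of the velocity mass matrix) and the discrete inf-sup condition in Lemma \ref{lemma-inf-sup}.

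First I would write the algebraic system in block form as
\begin{equation*}
\begin{pmatrix} A & B^T \\ B & 0 \end{pmatrix} \begin{pmatrix} U \\ P \end{pmatrix} = \begin{pmatrix} F \\ G \end{pmatrix},
\end{equation*}
where $A$ has entries $\inp[\Ki \v_j]{\v_i}_Q$ for a basis $\{\v_i\}$ of $\V_h^k$, $B$ has entries $\inp[\dvrg \v_j]{w_i}$ for a basis $\{w_i\}$ of $W_h^{k-1}$, and the right-hand side encodes $f$ and $\Rc^{k-1}_h g$. By Proposition \ref{reduct-prop-1}, $A$ is block-diagonal with symmetric positive-definite blocks (and in particular $A$ is itself SPD and invertible), which is precisely what enables the local velocity elimination procedure. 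Setting $U = A^{-1}(F - B^T P)$ and substituting into the second block yields the reduced pressure system
\begin{equation*}
S\,P := B A^{-1} B^T P = B A^{-1} F - G.
\end{equation*}

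Symmetry of $S$ is immediate, since $A^{-1}$ is symmetric: $S^T = (BA^{-1}B^T)^T = B A^{-1} B^T = S$. For positive semidefiniteness, observe that $P^T S P = (B^T P)^T A^{-1} (B^T P) \ge 0$ because $A^{-1}$ is SPD. To upgrade this to strict positive definiteness I need $B^T$ to have trivial kernel on $W_h^{k-1}$. This is exactly what the inf-sup condition of Lemma \ref{lemma-inf-sup} provides: if $B^T P = 0$, then $\inp[\dvrg \q]{P} = 0$ for every $\q \in \V_h^k$, and the bound
\begin{equation*}
\beta \|P\| \le \sup_{\q \in \V_h^k} \frac{\inp[\dvrg \q]{P}}{\|\q\|_{\dvr}} = 0
\end{equation*}
forces $P = 0$. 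Hence $P^T S P > 0$ for every nonzero $P$, and $S$ is SPD.

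There is no real obstacle here; the only thing to be careful about is making sure the algebraic Schur-complement coincides with the pressure system one actually assembles after local velocity elimination, which is just a matter of noting that the block-diagonal structure of $A$ allows inversion to be done one local block at a time (as described in the preceding paragraphs of the paper), so the resulting sparse cell-based stencil is exactly $S$. Once this identification is made, symmetry follows from $A = A^T$ and definiteness from the combination of Proposition \ref{reduct-prop-1} and Lemma \ref{lemma-inf-sup}.
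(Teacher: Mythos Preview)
Your proof is correct and follows essentially the same approach as the paper: form the block saddle-point system, use Proposition~\ref{reduct-prop-1} to conclude $A$ is SPD, and then argue that the Schur complement $BA^{-1}B^T$ is SPD because $B^T$ has trivial kernel. The only cosmetic difference is that you invoke the inf-sup condition (Lemma~\ref{lemma-inf-sup}) directly to kill $\ker B^T$, whereas the paper phrases this by referring back to the uniqueness argument in Theorem~\ref{well-posedness}, which itself rests on that same inf-sup bound.
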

\begin{proof}
The proof follows from the argument presented in Proposition 2.8 in
\cite{wheeler2006multipoint}. We present it here for the sake of
completeness. Denoting the bases of $\V^k_h$ and $W^{k-1}_h$ by
$\{\v_i\}$ and $\{w_i\}$, respectively, we obtain the saddle-point
type algebraic system arising from \eqref{mfmfe-1}-\eqref{mfmfe-2},
		\begin{align}
		\begin{pmatrix}
		A & B^T \\
		B & 0   
		\end{pmatrix}
		\begin{pmatrix}
		U \\
		P   
		\end{pmatrix}
		=
		\begin{pmatrix}
		G \\
		F   
		\end{pmatrix}, \label{saddle-point}
		\end{align}
where $A_{ij} = \inp[\Ki\v_i]{\v_j}_Q$ and $B^T_{ij} =
-\inp[\dvrg\v_i]{w_j}$. The matrix $A$ obtained by the above procedure
is symmetric and positive definite, as it is block diagonal with SPD
blocks associated with quadrature nodes shown in Proposition
\ref{reduct-prop-1}. The elimination of $U$ leads to a system for $P$
with a symmetric and positive semidefinite matrix $BA^{-1}B^T$. It follows
immediately from the proof of Theorem \ref{well-posedness} that $B^TP
= 0$ if and only if $P=0$. Therefore, $BA^{-1}B^T$ is positive
definite.
	\end{proof}
	
\begin{remark}
We note that while $\V^k_h$ has more DOFs than $\V^{k-1}_{RT,h}$ with
comparable accuracy, cf. Section~\ref{sec:vel}, the above reduction
technique allows for local elimination of all velocity DOFs, resulting
in a symmetric and positive definite system only for the pressure DOFs
in $W^{k-1}_h$. This is computationally more efficient than solving a
saddle point problem for the classical Raviart-Thomas MFE method in 
$\V^{k-1}_{RT,h}\times W^{k-1}_h$.
\end{remark}	

{\blue
\begin{remark} \label{tensor}
It was pointed out by an anonymous reviewer that it can be shown that
the matrix $B$ has a tensor product structure if it is formed using
the tensor product Gauss quadrature rule. This property can be
exploited for faster and low storage matrix-free assembly and
application of the matrix $B A^{-1} B^T$, resulting in further gain in
efficiency.  We thank the reviewer for noting this important property.
\end{remark}
}

	\section{Velocity error analysis.}\label{sec:vel}
	
Although the proposed schemes can be defined and are well posed on
general quadrilateral or hexahedra, for the convergence analysis we
need to impose a restriction on the element geometry. This is due to
the reduced approximation properties of the MFE spaces on arbitrary
shaped quadrilaterals or hexahedra that our new family of elements
inherits as well. The necessity of said restriction is confirmed by
the numerical computations.
We recall that, since the mapping $F_E$ is
trilinear in 3d, the faces of an element $E$ may be non-planar. We
will refer to the faces as generalized quadrilaterals. We recall the notation 
of $\r_i$, $i=1,\dots, 2^d$, and edges $\r_{ij} = \r_i - \r_j$ from 
Section~\ref{sec:map}.

\begin{definition}\label{def-h2-par2d}
A (generalized) quadrilateral with vertices $\r_i$, $i=1,\dots,4$,
is called an $h^2$-parallelogram if
$$
|\r_{34} - \r_{21}|_{\R^d} \le Ch^2.
$$
\end{definition}
The name follows the terminology from
\cite{ewing1999superconvergence,ingram2010multipoint}. Note that
elements of this type in 2d can be obtained by uniform refinements of a
general quadrilateral grid. It follows from \eqref{map-def-2d} that
$\frac{\partial^2F_E}{\partial\xh\partial\yh}$ is $\mathcal{O}(h^2)$
for $h^2$-parallelograms.
	
\begin{definition}\label{def-h2-par3d}
A hexahedral element is called an $h^2$-parallelepiped if all of its faces 
are $h^2$-parallelograms.
\end{definition}
	
\begin{definition}\label{def-reg-h2-par3d}
An $h^2$-parallelepiped with vertices $\r_i$, $i=1,\dots,8$, is called regular if
\begin{align*}
|(\r_{21} - \r_{34}) - (\r_{65} - \r_{78})|_{\R^3} \le Ch^3.
\end{align*}
\end{definition}
It is clear from \eqref{map-def-3d} that for
$h^2$-parallelepipeds,
$\frac{\partial^2F_E}{\partial\xh\partial\yh},\,\frac{\partial^2F_E}{\partial\yh\partial\zh}$
and $\frac{\partial^2F_E}{\partial\xh\partial\zh}$ are
$\mathcal{O}(h^2)$. Moreover, in case of regular $h^2$-parallelepipeds,
$\frac{\partial^3F_E}{\partial\xh\partial\yh\partial\zh}$ is
$\mathcal{O}(h^3)$.

We next present some bounds on the derivatives of the mapping $F_E$.
\begin{lemma}
Let $j\geq 0$. Then the bounds
\begin{equation}\label{J-bound}
|J_E|_{j,\infty,\Eh} \leq C h^{j+d}, \,\, j \leq \alpha, 
\mbox{ where } \alpha = 1 
\mbox{ in } 2d, \, \,
\alpha = 4 \mbox{ in } 3d, \,\, |J_E|_{j,\infty,\Eh} = 0,
\,\, j > \alpha,
\end{equation}
and
\begin{align}
		|DF_E|_{j,\infty,\Eh} \le\begin{cases}
		Ch^{j+1}, & j<d,\\
		0, & j\geq d
		\end{cases} ,\quad \left| \frac{1}{J_E}DF_E \right|_{j,\infty,\Eh} \le Ch^{j-d+1}, \quad |J_E DF_E^{-1}|_{j,\infty,\Eh} \leq \begin{cases}
		Ch^{j+d-1}, & j\leq d\\
		0, & j>d
		\end{cases} \label{map-deriv-bounds}
		\end{align}
hold if $E$ is an $h^2$-parallelogram or a regular
$h^2$-parallelepiped. Moreover, the estimates \eqref{map-deriv-bounds} 
hold for $j=0$ if $E$ is
a general quadrilateral or hexahedron and for $j=0,1$ if $E$ is an
$h^2$-parallelepiped.
	\end{lemma}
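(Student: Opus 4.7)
The plan is to work directly from the explicit formulas \eqref{map-def-2d}--\eqref{map-def-3d} for $F_E$, since $F_E$ is a multilinear polynomial of low degree (bilinear in 2d, trilinear in 3d) and so the problem reduces to (i) accounting for which partial derivatives of $F_E$ vanish identically by degree considerations, and (ii) tracking the size of the surviving coefficients under the $h^2$-parallelogram and regular $h^2$-parallelepiped hypotheses.

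First I would verify the bounds on $|DF_E|_{j,\infty,\Eh}$ by differentiating \eqref{map-def-2d}, \eqref{map-def-3d} term by term. Shape regularity and quasi-uniformity give $|\r_{ij}|_{\R^d}\sim h$, which yields the $j=0$ bound on any quadrilateral/hexahedron. The coefficients of the genuinely nonlinear terms, namely $\r_{34}-\r_{21}$ in 2d and $\r_{34}-\r_{21}$, $\r_{65}-\r_{21}$, $\r_{85}-\r_{41}$ in 3d, are $O(h^2)$ by Definitions \ref{def-h2-par2d}--\ref{def-h2-par3d}; they are precisely what appears in the mixed second partials of $F_E$, so this delivers $|DF_E|_{1,\infty,\Eh}\le C h^2$. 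In 3d, the third-order cross partial $\partial^3_{\xh\yh\zh}F_E$ equals the coefficient $(\r_{21}-\r_{34})-(\r_{65}-\r_{78})$, which is $O(h^3)$ by Definition \ref{def-reg-h2-par3d}, giving the $j=2$ bound. Finally, since $F_E$ is degree-$d$ multilinear, $|DF_E|_{j,\infty,\Eh}=0$ for $j\ge d$. The weaker statements for general elements and for (non-regular) $h^2$-parallelepipeds at $j=0,1$ follow from the same argument, simply omitting the regularity hypothesis.

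Next I would handle $J_E=\det(DF_E)$. In 2d one expands the $2\times 2$ determinant directly; the $\xh\yh$ terms cancel, so $J_E$ is affine in $(\xh,\yh)$, hence $|J_E|_{j,\infty,\Eh}=0$ for $j>1=\alpha$. The constant term is the parallelogram-like area and is $O(h^2)$; the linear coefficients combine one $O(h)$ and one $O(h^2)$ factor and are therefore $O(h^3)$. In 3d one carries out the $3\times 3$ expansion; a direct (but tedious) inspection shows that $J_E$ is a polynomial of total degree at most $4$, giving $|J_E|_{j,\infty,\Eh}=0$ for $j>4=\alpha$, while the constant term is the element volume, $O(h^3)$. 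Each derivative then either swaps an $O(h)$ factor for an $O(h^2)$ (or $O(h^3)$) coefficient coming from the mixed terms, so a direct count of the multilinear products gives $|J_E|_{j,\infty,\Eh}\le Ch^{j+d}$ for $0\le j\le\alpha$. Shape regularity also gives the pointwise lower bound $J_E\ge c h^d$, which I record for the next step.

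Finally I would bound $\frac{1}{J_E}DF_E$ and $J_E DF_E^{-1}$. For the first, writing $\frac{1}{J_E}DF_E = J_E^{-1}\cdot DF_E$ and applying the Leibniz rule, the task reduces to bounding $|J_E^{-1}|_{l,\infty,\Eh}$. An induction on $l$ using the Faà di Bruno formula for the composition $t\mapsto 1/t$ with $t=J_E$, together with the lower bound $J_E\ge c h^d$ and the derivative bounds on $J_E$ just obtained, yields $|J_E^{-1}|_{l,\infty,\Eh}\le C h^{l-d}$; combined with $|DF_E|_{j-l,\infty,\Eh}\le C h^{j-l+1}$ this gives the claimed $C h^{j-d+1}$. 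For $J_E DF_E^{-1}$ I would use the cofactor formula $J_E DF_E^{-1}=\operatorname{adj}(DF_E)$, whose entries are signed $(d-1)\times(d-1)$ minors and hence polynomials of degree $d-1$ in the entries of $DF_E$. A Leibniz-rule count on such a product, using $|DF_E|_{j,\infty,\Eh}\le C h^{j+1}$ and $|DF_E|_{j,\infty,\Eh}=0$ for $j\ge d$, bounds each derivative of order $j\le d$ by a sum of products whose total $h$-exponent is at least $(d-1)\cdot 1+j=j+d-1$, and forces vanishing for $j>d$.

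The main obstacle is the 3d determinant: one must verify both the degree reduction (the $\xh^2$, $\yh^2$, $\zh^2$ and other high-degree monomials cancel, leaving a polynomial of total degree $4$) and the precise scaling of the coefficients that result from sums and differences of the $\r_{ij}$ and the mixed-term vectors. Once these cancellations are pinned down, the remaining estimates for $1/J_E$ and $\operatorname{adj}(DF_E)$ are routine Leibniz-rule bookkeeping.
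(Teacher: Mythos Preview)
Your strategy is the paper's: work from the explicit multilinear formulas \eqref{map-def-2d}--\eqref{map-def-3d}, read off which derivatives vanish by degree, track the coefficient sizes under Definitions~\ref{def-h2-par2d}--\ref{def-reg-h2-par3d}, and handle $1/J_E$ and $\mathrm{adj}(DF_E)$ by Leibniz/Fa\`a di Bruno and the cofactor formula. Two corrections are needed.

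First, a minor imprecision for $J_E$ in 3d: the terms in $\xh^2$, $\yh^2$, $\zh^2$ do \emph{not} cancel in general. What one checks (as the paper does) is $(J_E)_{\xh\xh\xh}=(J_E)_{\yh\yh\yh}=(J_E)_{\zh\zh\zh}=0$, so $J_E$ has degree $\le 2$ in each variable separately and total degree $\le 4$; quadratic-in-one-variable terms may survive. Your conclusion (total degree $\le 4$) is still correct.

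Second, a genuine gap for $J_E DF_E^{-1}=\mathrm{adj}(DF_E)$ in 3d. Your Leibniz bookkeeping gives the scaling $h^{j+d-1}$ for $j\le d$, but it does \emph{not} force vanishing for $j>d$. Each entry of $DF_E$ has degree $\le d-1=2$ in $\xbh$, so a $2\times 2$ minor has naive degree $\le 4$; at $j=4$ the Leibniz sum for a product of two entries retains the $(\partial^2\cdot)(\partial^2\cdot)$ term, which is not killed by $|DF_E|_{j,\infty,\Eh}=0$ for $j\ge 3$. You must check, exactly as you flagged for $J_E$, that the degree-$4$ monomials cancel in each cofactor: e.g.\ in the $(1,1)$ cofactor the two products contribute identical $D_2D_3\,\xh^2\yh\zh$ terms (with $D=(\r_{21}-\r_{34})-(\r_{65}-\r_{78})$) that subtract out, leaving degree $\le 3$. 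The paper states this directly (``$J_E DF_E^{-1}$ is of total degree $3$'') and uses the vanishing at $j=4$ in the next lemma, so this cancellation cannot be skipped.
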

	\begin{proof}
We begin with the proof of \eqref{J-bound}. In 2d, \eqref{map-def-2d}
gives
\begin{align*}
DF_E = [\r_{21},\r_{41}] + [(\r_{34} - \r_{21})\yh,(\r_{34} - \r_{21})\xh],
\end{align*}
from which it can be shown easily that $J_E$ is a linear function satisfying
\eqref{J-bound}. In 3d, \eqref{map-def-3d} gives
\begin{equation}\label{DF}
\begin{split}
DF_E = [& \r_{21} + (\r_{34} - \r_{21})\yh + (\r_{65} - \r_{21})\zh
+ ((\r_{21} - \r_{34}) - (\r_{65} - \r_{78}))\yh\zh;\\
& \r_{41} + (\r_{34} - \r_{21})\xh + (\r_{85} - \r_{41})\zh
+ ((\r_{21} - \r_{34}) - (\r_{65} - \r_{78}))\xh\zh;\\
& \r_{51} + (\r_{65} - \r_{21})\xh + (\r_{85} - \r_{41})\yh
+ ((\r_{21} - \r_{34}) - (\r_{65} - \r_{78}))\xh\yh].
\end{split}
\end{equation}
It can be verified that $J_E$ is a polynomial of three variables of total 
power at most 4 with 
\begin{align}
(J_E)_{\xh\xh\xh} = (J_E)_{\yh\yh\yh} = (J_E)_{\zh\zh\zh} =0, 
\label{j-zero-partials}
\end{align}
and it can be written as
$
J_E = \sum_{0 \le r_1 + r_2 + r_3 \le 4} \alpha_{r_1r_2r_3} \xh^{r1}\yh^{r2}\zh^{r3},
$
where 
\begin{equation}\label{j-coeff-scaling}
|\alpha_{r_1r_2r_3}| \le C h^{r_1+r_2+r_3+3},
\end{equation}
from which \eqref{J-bound} follows immediately.

We proceed with the proof of \eqref{map-deriv-bounds}.  If $E$ is a
general quadrilateral or hexahedron, the bounds with
$j=0$ are stated in \eqref{mapping-bounds}. The estimates in 2d and
for $j = 1,\,2$ in 3d were shown in
\cite{wheeler2006multipoint,ewing1999superconvergence,ingram2010multipoint}.
We now focus on the case when $E$ is a regular $h^2$-parallelepiped
and $j>2$. Since {\blue $F_E$} is bilinear, 
$|DF_E|_{k,\infty,\Eh} =0, \, \forall k>2$, and \eqref{DF} gives
\begin{align}
|DF_E|_{k,\infty,\Eh} \leq Ch^{k+1}, \quad k=0,\,1,\,2. \label{df-scaling}
\end{align}
Therefore, it follows from the product rule that for any $j>2$,
		\begin{align}
		\left| \frac{1}{J_E} DF_E \right|_{j,\infty,\Eh} \leq C\left(\left| \frac{1}{J_E}\right|_{j,\infty,\Eh}|DF_E|_{0,\infty,\Eh}+ \left|\frac{1}{J_E}\right|_{j-1,\infty,\Eh}|DF_E|_{1,\infty,\Eh} +\left|\frac{1}{J_E}\right|_{j-2,\infty,\Eh}|DF_E|_{2,\infty,\Eh}\right). \label{j-inv-df-scaling}
		\end{align}
		We further compute the derivatives of $\displaystyle \frac{1}{J_E}$:
		\begin{align*}\allowdisplaybreaks
		\left(\frac{1}{J_E} \right)_{\xh} &= -\frac{1}{J_E^2}(J_E)_{\xh},  \quad \left(\frac{1}{J_E} \right)_{\xh\xh\xh} =-\frac{6}{J_E^4}(J_E)^3_{\xh} +\frac{6}{J_E^3}(J_E)_{\xh}(J_E)_{\xh\xh}  , \\
		\left(\frac{1}{J_E} \right)_{\xh\xh} &= \frac{2}{J_E^3}(J_E)^2_{\xh} -\frac{1}{J_E^2}(J_E)_{\xh\xh}, \quad\left(\frac{1}{J_E} \right)_{\xh\yh} = \frac{2}{J_E^3}(J_E)_{\xh}(J_E)_{\yh} -\frac{1}{J_E^2}(J_E)_{\xh\yh},\quad \\
		\left(\frac{1}{J_E} \right)_{\xh\xh\yh} &= -\frac{6}{J_E^4}(J_E)^2_{\xh} (J_E)_{\yh} +\frac{4}{J_E^3}(J_E)_{\xh}(J_E)_{\xh\yh} +\frac{2}{J_E^3} (J_E)_{\yh}(J_E)_{\xh\xh} -\frac{1}{J_E^2}(J_E)_{\xh\xh\yh}   \\
		\left(\frac{1}{J_E} \right)_{\xh\yh\zh} &=-\frac{6}{J_E^4}(J_E)_{\xh}(J_E)_{\yh}(J_E)_{\zh}+\frac{2}{J_E^3}(J_E)_{\xh\zh}(J_E)_{\yh}+\frac{2}{J_E^3}(J_E)_{\xh}(J_E)_{\yh\zh} +\frac{2}{J_E^3}(J_E)_{\zh}(J_E)_{\xh\yh}-\frac{1}{J_E^2}(J_E)_{\xh\yh\zh},\\
		\left(\frac{1}{J_E} \right)_{\xh\xh\yh\zh} &= \frac{24}{J_E^5}(J_E)^2_{\xh}(J_E)_{\yh}(J_E)_{\zh}-\frac{12}{J_E^4}(J_E)_{\xh}(J_E)_{\yh}(J_E)_{\xh\zh}-\frac{6}{J_E^4}(J_E)^2_{\xh}(J_E)_{\yh\zh} -\frac{12}{J_E^4}(J_E)_{\xh}(J_E)_{\zh}(J_E)_{\xh\yh}\\
		& +\frac{4}{J_E^3}(J_E)_{\xh\zh}(J_E)_{\xh\yh} +\frac{4}{J_E^3}(J_E)_{\xh}(J_E)_{\xh\yh\zh} - \frac{6}{J_E^4}(J_E)_{\zh}(J_E)_{\yh}(J_E)_{\xh\xh}+ \frac{2}{J_E^3}(J_E)_{\xh\xh}(J_E)_{\yh\zh} \\
		& + \frac{2}{J_E^3}(J_E)_{\yh}(J_E)_{\xh\xh\zh}+ \frac{2}{J_E^3}(J_E)_{\zh}(J_E)_{\xh\xh\yh} - \frac{1}{J_E^2}(J_E)_{\xh\xh\yh\zh}.
		\end{align*}
We note that due to \eqref{j-zero-partials} the higher order partial
derivatives will consist of the same partials that appear above, while
the power of $J_E$ in the denominator will continue to
grow. Therefore, it follows from \eqref{j-coeff-scaling} that
$\left|\frac{1}{J_E} \right|_{k,\infty,\Eh} \leq Ch^{k-3}$, which, combined
with \eqref{df-scaling} and \eqref{j-inv-df-scaling}, implies that
\begin{align*}
\left| \frac{1}{J_E} DF_E \right|_{j,\infty,\Eh} \leq C\left( h^{j-3}h + h^{j-4}h^2 + h^{j-5}h^3 \right) \leq Ch^{j-2}.
\end{align*}
To show the last inequality in \eqref{map-deriv-bounds}, we note that
using the cofactor formula for inverse of a matrix, one can verify
that $J_E DF_E^{-1}$ is of total degree 3, which implies that for
every $k>3$, $|J_E DF_E^{-1}|_{k,\infty,\Eh} = 0$. 
We also compute
		\begin{align*}
		((J_E DF_E^{-1})_{11})_{\xh\xh\yh} &= 2\big[(y_1-y_2)+(y_3-y_4)\big]\big[(z_5-z_6)+(z_7-z_8)+(z_2-z_1)+(z_4-z_3) \big] \\
		&+ 2\big[(z_1-z_2)+(z_3-z_4)\big]\big[(y_6-y_5)+(y_8-y_7)+(y_1-y_2)+(y_3-y_4)\big],
		\end{align*}
with similar expressions for the rest of partial derivatives. Therefore
$ |J_E DF_E^{-1}|_{3,\infty,\Eh} \leq Ch^5$.
\end{proof}
	
The above bounds allow us to control the norms of the velocity and permeability on the reference element. 
\begin{lemma}
For all $\q\in H^j(E)$, there exists a constant $C$ independent of $h$ such 
that the bound
\begin{align}
|\qh|_{j,\Eh} \le Ch^{j+\frac{d-2}{2}}\|\q\|_{j,E} \label{vel-bound}
\end{align}
holds for every $j\ge 0$ if $E$ is an $h^2$-parallelogram or regular
$h^2$-parallelepiped, for $j=0,1$ if $E$ is an $h^2$-parallelepiped
and for $j=0$ if $E$ is a general quadrilateral or
hexahedron. 
\end{lemma}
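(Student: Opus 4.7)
My plan is to start from the Piola transformation \eqref{piola}, which gives the pointwise identity
\[
\qh(\xbh) = (J_E\, DF_E^{-1})(\xbh)\,\q(F_E(\xbh)),
\]
and then differentiate both sides using the Leibniz product rule followed by Faà di Bruno's chain rule. The case $j=0$ is Lemma~\ref{lem-q-sim-qhat}; for $j\ge 1$ the argument relies entirely on the geometric bounds from the preceding lemma, which is why the restrictions on $j$ will match exactly the admissible element types.

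Fix a multiindex $\alpha$ with $|\alpha|=j$. I first use Leibniz to write $\partial^\alpha\qh$ as a sum of terms of the form $\partial^\beta(J_E\, DF_E^{-1})\cdot \partial^{\alpha-\beta}(\q\circ F_E)$, with $\beta\le\alpha$. The first factor is controlled in $L^\infty(\Eh)$ by $|J_E\, DF_E^{-1}|_{|\beta|,\infty,\Eh}\le C\,h^{|\beta|+d-1}$ from \eqref{map-deriv-bounds}. Next, I expand the composition factor by Faà di Bruno: each summand has the form $(\partial^\gamma\q)(F_E(\xbh))\,\prod_i \partial^{\delta_i}F_E(\xbh)$, where $|\gamma|\le |\alpha-\beta|$, each $|\delta_i|\ge 1$, and $\sum_i |\delta_i| = |\alpha-\beta|$. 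The bound $|DF_E|_{i,\infty,\Eh}\le C\,h^{i+1}$ in \eqref{map-deriv-bounds} gives $|\partial^{\delta_i}F_E|_{0,\infty,\Eh}\le C\,h^{|\delta_i|}$, so the product of the chain-rule factors is pointwise bounded by $C\,h^{|\alpha-\beta|}$.

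I then square, integrate over $\Eh$, and convert each integral of $|(\partial^\gamma\q)\circ F_E|^2$ to one over $E$ by the change of variables $\xbh = F_E^{-1}(\x)$; the resulting Jacobian contributes the factor $\|J_{F_E^{-1}}\|_{0,\infty,E}\sim h^{-d}$ from \eqref{mapping-bounds}. Collecting the exponents of $h$,
\[
\|\partial^\alpha\qh\|_{\Eh}^2 \;\le\; C\sum_{\beta\le\alpha} h^{2(|\beta|+d-1)}\,\cdot\,h^{2|\alpha-\beta|}\,\cdot\,h^{-d}\,\|\q\|_{j,E}^2 \;=\; C\,h^{2j+d-2}\,\|\q\|_{j,E}^2,
\]
and summing over $|\alpha|=j$ produces the desired estimate \eqref{vel-bound}.

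The main obstacle is the combinatorial bookkeeping: I must verify that every term produced by the product and chain rules collapses to the same power $h^{2j+d-2}$, regardless of the partition of $\alpha$. Once this cancellation is checked, the admissible range of $j$ is dictated purely by which derivative orders of $F_E$ and $J_E\, DF_E^{-1}$ are controlled in \eqref{map-deriv-bounds}: only $|\cdot|_{0,\infty,\Eh}$-bounds for a general quadrilateral/hexahedron (yielding $j=0$), up to first derivatives for an $h^2$-parallelepiped (yielding $j\le 1$), and arbitrary order for an $h^2$-parallelogram or a regular $h^2$-parallelepiped (yielding all $j\ge 0$), precisely matching the hypotheses in the statement.
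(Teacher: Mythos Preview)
Your proof is correct and follows essentially the same approach as the paper: both write $\qh = (J_E\,DF_E^{-1})\,(\q\circ F_E)$, apply Leibniz to the product and the chain rule to the composition, invoke the bounds \eqref{map-deriv-bounds} on the geometric factors, and finish with a change of variables. The paper separates the argument by naming $\tilde\q = \q\circ F_E$ and recording the intermediate bound $|\tilde\q|_{j,\Eh}\le Ch^{j-d/2}\|\q\|_{j,E}$ before combining with the matrix factor, whereas you carry out both expansions in one pass; this is a presentational, not a substantive, difference.
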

\begin{proof}
The result in 2d was shown in 
\cite{ewing1999superconvergence,wheeler2006multipoint}, while the 
cases $j=0,1,2$ in 3d
were proven in \cite{ingram2010multipoint}. It
then suffices to prove the case $j\ge 3$ for regular $h^2$-parallelepipeds. Let
		$$\tilde{\q} = \q\circ F_E(\xbh), \quad \qh = J_E DF^{-1}_E\tilde{\q}. $$
As it was shown in the previous lemma $|J_E DF_E^{-1}|_{4,\infty,\Eh} = 0$, 
hence \eqref{map-deriv-bounds} implies that for $r\ge 3$,
\begin{align}
		|\qh|_{r,\Eh} \le C\left( h^2|\tilde{\q}|_{r,\Eh} + h^3|\tilde{\q}|_{r-1,\Eh} +  h^4|\tilde{\q}|_{r-2,\Eh} + h^5|\tilde{\q}|_{r-3,\Eh} \right). \label{tmp2}
\end{align}
By change of variables and the chain rule, we have that 
$|\tilde{\q}|_{j,\Eh} \le Ch^{j-3/2}\|\q\|_{j,E}$, which, combined with 
\eqref{tmp2}, completes the proof.
	\end{proof}
	
	\begin{lemma}\label{perm-bound-lemma}
There exists a constant $C$ independent of $h$ such that the bound 
\begin{align}
&|\Kci|_{j,\infty,\Eh} \le Ch^{j-d+2}\|K^{-1}\|_{j,\infty,E}.\label{perm-bound}
\end{align}
holds with $j\ge 0$ on $h^2$-parallelograms and regular
$h^2$-parallelepipeds, with $j=0,1$ on $h^2$-parallelepipeds and with
$j=0$ on general quadrilaterals and hexahedra.
	\end{lemma}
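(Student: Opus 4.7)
The plan is to start from the explicit expression $\Kci = \frac{1}{J_E} DF_E^T \hat{K}^{-1} DF_E$ derived in \eqref{def-kci} (in the bilinear-form computation preceding it), apply Leibniz's rule component-wise, and bound each factor separately using the estimates \eqref{map-deriv-bounds} from the previous lemma together with a chain-rule bound on $\hat{K}^{-1} = K^{-1} \circ F_E$. The final estimate should come out by just counting powers of $h$.

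First, I would record the intermediate bound
\begin{align*}
\left| \frac{1}{J_E} \right|_{j,\infty,\Eh} \le C h^{j-d},
\end{align*}
which is already extracted in the preceding proof (in 3d it is the bound $|1/J_E|_{k,\infty,\Eh} \le C h^{k-3}$), and which holds for $j$ in the same range as \eqref{map-deriv-bounds}. Together with $|DF_E|_{j,\infty,\Eh} \le Ch^{j+1}$ this controls the geometric factors.

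Next, I would treat $\hat{K}^{-1}$ via Faà di Bruno / iterated chain rule: each $j$-th partial of $\hat{K}^{-1}$ is a sum of products $(\partial^m K^{-1})(F_E)\cdot \prod_{s} \partial^{\alpha_s} F_E$ with $\sum_s |\alpha_s| = j$ and $m \le j$. Using $|DF_E|_{i,\infty,\Eh} \le C h^{i+1}$ and $\|\partial^{\alpha_s} F_E\|_{0,\infty,\Eh} \le C h^{|\alpha_s|}$, each such term contributes at most $C h^{j}\|K^{-1}\|_{m,\infty,E} \le C h^{j}\|K^{-1}\|_{j,\infty,E}$, so
\begin{align*}
|\hat{K}^{-1}|_{j,\infty,\Eh} \le C h^{j} \|K^{-1}\|_{j,\infty,E}.
\end{align*}

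Finally, Leibniz gives
\begin{align*}
|\Kci|_{j,\infty,\Eh} \le C \sum_{a+b+c+e=j} \left|\tfrac{1}{J_E}\right|_{a,\infty,\Eh} |DF_E|_{b,\infty,\Eh} |\hat{K}^{-1}|_{c,\infty,\Eh} |DF_E|_{e,\infty,\Eh},
\end{align*}
and plugging the three bounds above into a generic term yields
\begin{align*}
h^{a-d}\cdot h^{b+1}\cdot h^{c}\|K^{-1}\|_{c,\infty,E}\cdot h^{e+1} = h^{j-d+2}\|K^{-1}\|_{c,\infty,E} \le h^{j-d+2}\|K^{-1}\|_{j,\infty,E},
\end{align*}
which is exactly \eqref{perm-bound}. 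The range of admissible $j$ for each grid type is inherited directly from the range of \eqref{map-deriv-bounds}: $j=0$ for general quadrilaterals/hexahedra, $j\le 1$ for $h^2$-parallelepipeds, and arbitrary $j$ for $h^2$-parallelograms and regular $h^2$-parallelepipeds.

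The only mildly delicate step is the chain-rule bound for $\hat{K}^{-1}$: one must verify that every Faà di Bruno term scales like $h^j$ (not worse) using that each $\partial^{\alpha} F_E$ gains exactly one power of $h$ relative to $|\alpha|$, i.e.\ $\|\partial^\alpha F_E\|_{0,\infty,\Eh} \le Ch^{|\alpha|} \cdot h^{1-|\alpha|+ |\alpha|}$; once this bookkeeping is done the rest is pure power-counting. No new ideas beyond the derivative estimates already established are required.
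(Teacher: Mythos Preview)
Your proposal is correct and follows essentially the same approach as the paper: establish the chain-rule bound $|\hat{K}^{-1}|_{j,\infty,\Eh}\le Ch^{j}\|K^{-1}\|_{j,\infty,E}$, then apply Leibniz to $\Kci=\frac{1}{J_E}DF_E^T\hat K^{-1}DF_E$ and count powers of $h$ using \eqref{map-deriv-bounds}. The only cosmetic difference is that the paper groups $\frac{1}{J_E}DF_E$ as a single factor (using the middle estimate in \eqref{map-deriv-bounds} directly) rather than splitting it into $\frac{1}{J_E}$ and $DF_E$ as you do.
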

	\begin{proof}
The above result with $j=0$ was already stated in
\eqref{perm-mapped-bounds}. Moreover, for $j=1,2$ \eqref{perm-bound}
was shown in \cite{wheeler2006multipoint,ingram2010multipoint}, so we
focus on the case $j\ge 3$ for $h^2$-parallelograms and regular
$h^2$-parallelepipeds.  By the use of a change of variables, the
chain rule, and \eqref{map-deriv-bounds}, it is easy to see that
\begin{align}
|\hat{K}^{-1}|_{j,\infty,\Eh} \le Ch^j|K^{-1}|_{j,\infty,E}. \label{k-scaling}
\end{align}
Using \eqref{map-deriv-bounds} and the definition of $\Kci$ given in 
\eqref{def-kci}, we have
\begin{align*}
|\Kci|_{j,\infty,\Eh} &\le C\sum_{\substack{0\le\alpha,\beta,\gamma\le j\\\alpha+\beta+\gamma = j}}|\frac{1}{J_E}DF_E|_{\alpha,\infty,\Eh}|\hat{K}^{-1}|_{\beta,\infty,\Eh}|DF_E|_{\gamma,\infty,\Eh} \\
&\le C\sum_{\substack{0\le\alpha,\beta,\gamma\le j\\\alpha+\beta+\gamma = j}}h^{\alpha-d+1}h^{\beta}h^{\gamma+1}\|K^{-1}\|_{j,\infty,E} \le Ch^{j-d+2}\|K^{-1}\|_{j,\infty,E},
\end{align*}
where we also used \eqref{k-scaling} for the second inequality.
	\end{proof}
	
	\begin{lemma}\label{app-prop-rtbubbles}
There exists a constant $C$ independent of $h$ such that on $h^2$-parallelograms and regular $h^2$-parallelepipeds
		\begin{align}
		\|\q - \Pi_*^k\q\| + \|\q - \Pi_{RT}^{k-1}\q\| &\le Ch^j \|\q\|_j, \label{app-prop-1}\\
		\|\q - \Pi_*^k\q\| &\le Ch^{j+1} \|\q\|_{j+1}, \label{app-prop-2}\\
		\|\dvrg\left(\q - \Pi_*^k\q\right)\| + \|\dvrg\left(\q - \Pi_{RT}^{k-1}\q\right)\| &\le Ch^j \|\dvrg\q\|_j,\label{app-prop-3} 
		\end{align}
		for $1\le j \le k$. Moreover, \eqref{app-prop-1} and \eqref{app-prop-3} also hold on $h^2$-parallelepipeds with $j=1$.
	\end{lemma}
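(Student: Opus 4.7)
The plan is to combine standard Bramble--Hilbert arguments on the reference element with the Piola-induced scaling recorded in Lemma~\ref{lem-q-sim-qhat} and estimate \eqref{vel-bound}. The key polynomial-preservation facts are that $\Pc^k(\Eh,\R^d)\subset\Vh^k(\Eh)$ componentwise (read off directly from the characterisation $q_1\in\Pc^{k+1}(\xh)\otimes\Rc^k(\yh,\zh)$ etc.) and $\Pc^{k-1}(\Eh,\R^d)\subset\Vh^{k-1}_{RT}(\Eh)$, so the projections $\Pih^k_*$ and $\Pih^{k-1}_{RT}$ fix these polynomial subspaces. Standard Bramble--Hilbert (boundedness of the projection on $L^2(\Eh)$ via norm equivalence on finite-dimensional spaces, combined with polynomial preservation) then yields
\[
\|\qh-\Pih^k_*\qh\|_{\Eh}\le C|\qh|_{\ell,\Eh},\ 1\le\ell\le k+1,\qquad \|\qh-\Pih^{k-1}_{RT}\qh\|_{\Eh}\le C|\qh|_{\ell,\Eh},\ 1\le\ell\le k.
\]

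First I would derive \eqref{app-prop-1} and \eqref{app-prop-2}. Chaining \eqref{q-sim-qhat}, the above Bramble--Hilbert bound, and \eqref{vel-bound} produces
\[
\|\q-\Pi^k_*\q\|_E \le C h^{(2-d)/2}\|\qh-\Pih^k_*\qh\|_{\Eh} \le C h^{(2-d)/2}|\qh|_{\ell,\Eh} \le C h^{\ell}\|\q\|_{\ell,E};
\]
choosing $\ell=j$ yields \eqref{app-prop-1}, while $\ell=j+1$ (valid since $j+1\le k+1$) yields \eqref{app-prop-2}. The same sequence with $\Pih^{k-1}_{RT}$ in place of $\Pih^k_*$ gives the Raviart--Thomas half of \eqref{app-prop-1}. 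The $h^2$-parallelepiped, $j=1$ case goes through because \eqref{vel-bound} is available there for $j=0,1$.

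Next, for \eqref{app-prop-3} I would use the commuting property \eqref{rte-prop-div} for $\Pih^k_*$, together with the analogous identity for $\Pih^{k-1}_{RT}$ derived from Green's identity applied to the defining moments \eqref{RT-dof-1}--\eqref{RT-dof-2}: the boundary term vanishes because traces of $\wh\in\Qc^{k-1}(\Eh)$ on a facet lie in $\Rc^{k-1}$, and the interior term vanishes because each component of $\hat\nabla\wh$ lies in the test space for the interior moments. In both cases $\hat\nabla\cdot\Pih\qh$ is therefore the $L^2(\Eh)$-projection of $\hat\nabla\cdot\qh$ onto $\Qc^{k-1}(\Eh)$, whence Bramble--Hilbert for the scalar $L^2$-projection gives $\|\hat\nabla\cdot(\qh-\Pih\qh)\|_{\Eh}\le C|\hat\nabla\cdot\qh|_{j,\Eh}$ for $1\le j\le k$. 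Using $\hat\nabla\cdot\qh=J_E\,(\nabla\cdot\q)\circ F_E$ from \eqref{rt-prop-4}, a change of variables, and Leibniz plus the chain rule controlled by \eqref{J-bound} and \eqref{map-deriv-bounds} (in the same spirit as the argument that yields \eqref{vel-bound}, but applied to the scalar $\nabla\cdot\q$) gives $|\hat\nabla\cdot\qh|_{j,\Eh}\le C h^{j+d/2}\|\nabla\cdot\q\|_{j,E}$ and $\|\nabla\cdot(\q-\Pi\q)\|_E\le C h^{-d/2}\|\hat\nabla\cdot(\qh-\Pih\qh)\|_{\Eh}$, which combine to \eqref{app-prop-3}.

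The main obstacle is the bookkeeping of $h$ powers: verifying that at each seminorm order $j$ the Leibniz expansion of $J_E(\nabla\cdot\q)\circ F_E$, controlled by \eqref{J-bound} and \eqref{map-deriv-bounds}, delivers exactly the power of $h$ that cancels the Piola prefactor $h^{(2-d)/2}$ (or $h^{-d/2}$ in the divergence case) and leaves the stated $h^j$. This is precisely where the geometric hypothesis enters, and explains why only $j=1$ is available on a general $h^2$-parallelepiped while regular $h^2$-parallelepipeds and 2d $h^2$-parallelograms support the full range $1\le j\le k$.
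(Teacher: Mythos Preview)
Your proposal is correct and follows essentially the same route as the paper: map to the reference element via \eqref{q-sim-qhat}, invoke polynomial preservation $\Pc^k(\Eh,\R^d)\subset\Vh^k(\Eh)$ (resp.\ $\Pc^{k-1}\subset\Vh^{k-1}_{RT}$) together with Bramble--Hilbert, and return via \eqref{vel-bound}; for the divergence, use the commuting relation \eqref{rte-prop-div}, Bramble--Hilbert for the scalar projection onto $\Qc^{k-1}(\Eh)$, and the Leibniz expansion of $J_E\,\widehat{\dvrg\q}$ controlled by \eqref{J-bound}, exactly as in the paper's \eqref{tmp4}--\eqref{tmp5}. One small wording issue: the projections $\Pih^k_*$ and $\Pih^{k-1}_{RT}$ are not bounded on $L^2(\Eh)$ (their facet moments require traces), so the phrase ``boundedness of the projection on $L^2(\Eh)$'' should be replaced by boundedness from $H^1(\Eh,\R^d)$ into $L^2(\Eh,\R^d)$; with that fix the Bramble--Hilbert step goes through as you intend.
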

	\begin{proof}
We present the proof for $\Pi_*^k$ only, as the argument for $\Pi_{RT}^{k-1}$ is similar. Using \eqref{q-sim-qhat} and \eqref{vel-bound}, 
we have 
		\begin{align*}
			\|\q - \Pi_*^k \q\|_E \le Ch^{\frac{d-2}{2}}\|\qh - \Pih_*^k \qh\|_{\Eh} \le Ch^{\frac{d-2}{2}}|\qh|_{j+1,\Eh} \le Ch^{j+1}\|\q\|_{j,E},
		\end{align*}
		where $1\le j \le k$. For the second inequality in the above, we used the fact that $\Pih_*^k$ preserves all polynomials of degree up to $k$, i.e., $\Pc^k(\Eh) \subset \Vh^k(\Eh)$, and applied the Bramble-Hilbert lemma \cite{ciarlet2002finite}. Summing over the elements completes the proof of the first two statements of the lemma.
		
		For the last inequality, it follows from \eqref{piola} that
		\begin{align}
			\int_E \left( \dvrg(\q - \Pi_*^k\q) \right)^2 d\x = \int_{\Eh} \frac{1}{J_E^2} \left( \nablah\cdot(\qh - \Pih_*^k\qh) \right)^2 J_E \,d\xbh \le Ch^{-d} |\nablah\cdot \qh|^2_{j,\Eh}, \label{tmp4}
		\end{align}
where we have used \eqref{mapping-bounds}, \eqref{rte-prop-div}, and
the Bramble-Hilbert lemma in the 
inequality. We also have
{\blue
		\begin{align}
		\begin{aligned}
|\nablah\cdot \qh|_{j,\Eh} = |J_E \widehat{\nabla\cdot\q}|_{j,\Eh} 
\le C\sum_{i=0}^{j} |J_E|_{i,\infty,\Eh} |\widehat{\nabla\cdot\q}|_{j-i,\Eh} 
\le C\sum_{i=0}^{j} h^{i+d}h^{j-i-\frac{d}{2}}|\dvrg\q|_{j-i,E}
\le Ch^{j+\frac{d}{2}}\|\dvrg\q\|_{j,E}, \label{tmp5}
		\end{aligned}
		\end{align}
		}
where we used \eqref{J-bound} and change of variables back to $E$ in the
second inequality. A combination of \eqref{tmp4} and \eqref{tmp5}, and a
summation over all elements completes the proof of \eqref{app-prop-3}.
	\end{proof}
	
	Let $\hat{\Qc}^{k-1}$ be the $L^2(\Eh)$-orthogonal projection onto $\Wh^{k-1}(\Eh)$, satisfying for any $\hat{\phi}\in L^2(\Eh)$,
	$$ \inp[\hat{\phi} - \hat{\Qc}^{k-1}\hat{\phi}]{\wh}_{\Eh} = 0\quad \forall\wh\in \Wh^{k-1}(\Eh). $$
	Let $\Qc_h^{k-1} : L^2(\O) \to W_h^{k-1}$ be the projection operator, satisfying for any $\phi\in L^2(\O)$,
	$$ \Qc_h^{k-1}\phi = \hat{\Qc}^{k-1}\hat{\phi}\circ F^{-1}_E \quad \mbox{on all }E. $$
	It follows from \eqref{rt-prop-00} that
	\begin{align}
	\inp[\phi - \Qc_h^{k-1}\phi]{\dvrg \v} = 0\quad \forall\v\in\V^k_h. \label{l2-prop-div}
	\end{align}
Using a scaling argument similar to \eqref{tmp4}-\eqref{tmp5}, one can
show that on $h^2$-parallelograms and regular $h^2$-parallelepipeds,
	\begin{align}
	\|\phi - \Qc_h^{k-1}\phi\| &\le Ch^j\|\phi\|_j, 
\quad 1\leq j \leq k. \label{l2-app-prop}
	\end{align}
Moreover, the above bound holds with $j=1$ on general quadrilaterals
and hexahedra and with $j=2$ on $h^2$-parallelepipeds. 

\begin{lemma}
For general quadrilaterals and hexahedra there exists a constant $C$ 
independent of $h$ such that for any finite element function $\varphi$
\begin{align}
\|\varphi\|_{j,E} \leq Ch^{-1}\|\varphi\|_{j-1,E}, \quad j=1,\dots,k.
\label{inverse-ineq}
\end{align}
\end{lemma}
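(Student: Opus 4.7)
The strategy is the classical scaling argument. First, I would pull $\varphi$ back to the reference element $\Eh$, either by composition $\hat\varphi = \varphi\circ F_E$ for scalar finite element functions in $W_h^{k-1}$ or through the inverse Piola transform \eqref{piola} for vector-valued functions in $\V_h^k$. In either case $\hat\varphi$ lies in a fixed finite-dimensional polynomial space on $\Eh$. Because that space is finite-dimensional, all Sobolev norms on it are equivalent, so there is a constant $C$ depending only on $k$ and $\Eh$ with
$$
\|\hat\varphi\|_{j,\Eh} \le C\,\|\hat\varphi\|_{j-1,\Eh}, \qquad j=1,\dots,k.
$$

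Second, I would transfer this reference bound back to $E$ via change of variables and the chain rule, using the mapping bounds \eqref{mapping-bounds}. Combined with shape regularity, these yield the two scalings
$$
\|\varphi\|_{j,E} \le C\,h^{d/2-j}\|\hat\varphi\|_{j,\Eh},
\qquad
\|\hat\varphi\|_{j-1,\Eh} \le C\,h^{j-1-d/2}\|\varphi\|_{j-1,E}.
$$
Chaining these three bounds gives precisely $\|\varphi\|_{j,E} \le Ch^{-1}\|\varphi\|_{j-1,E}$, as required.

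The main obstacle is justifying those scalings in the second step on \emph{general} quadrilaterals or hexahedra, where the higher-derivative bounds on $F_E$ in \eqref{map-deriv-bounds} are only stated for $j=0$. The key observation is that $F_E$ is bilinear in 2d and trilinear in 3d, so each partial derivative of $F_E$ of any order is explicitly a polynomial in the edge vectors $\r_{ij}$ and is therefore bounded by $Ch$, while shape regularity guarantees $J_E \sim h^d$ uniformly. Using the cofactor representation $DF_E^{-1}=\mathrm{adj}(DF_E)/J_E$ together with iterated application of the chain rule (or Fa\`a di Bruno's formula), one obtains pointwise bounds of the form $|F_E^{-1}|_{m,\infty,E}\le Ch^{-m}$ for all $m\le k$. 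Substituting these into the chain-rule expansion of $D^j\varphi$ and applying the change of variables with $J_E\sim h^d$ then delivers the claimed scalings and completes the proof.
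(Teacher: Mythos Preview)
Your approach is correct and follows the classical scaling argument, which is also what the paper does. There is, however, a small but instructive difference in how the case $j>1$ is handled.

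The paper proves only the case $j=1$ explicitly: it pulls back by $\tilde\varphi=\varphi\circ F_E$, applies the inverse estimate $|\tilde\varphi|_{1,\Eh}\le C\|\tilde\varphi\|_{\Eh}$ on the reference element, and scales back using \emph{only} the zeroth-order mapping bounds \eqref{mapping-bounds}. For $j>1$ the paper simply says ``the general case follows by applying the above bound to any derivative of $\varphi$''. The point is that this shortcut never requires higher-derivative bounds on $F_E^{-1}$, so the difficulty you identify as the ``main obstacle'' is sidestepped entirely.

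You instead establish the full scalings $\|\varphi\|_{j,E}\le Ch^{d/2-j}\|\hat\varphi\|_{j,\Eh}$ and $\|\hat\varphi\|_{j-1,\Eh}\le Ch^{j-1-d/2}\|\varphi\|_{j-1,E}$ directly for all $j$, which does require $|F_E^{-1}|_{m,\infty,E}\le Ch^{-m}$ on general elements. Your derivation of that bound (via the cofactor formula, the fact that all $\xbh$-derivatives of $DF_E$ and $J_E$ scale like $h$ and $h^d$ respectively on shape-regular elements, and iterated chain rule) is correct. This route is somewhat more work but is also more self-contained: the paper's bootstrap step is terse, since a partial derivative $\partial_i\varphi$ does not pull back to a polynomial on $\Eh$ (it picks up factors of $(DF_E)^{-1}$) and hence is not literally a ``finite element function'' in the same sense. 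Your argument avoids that subtlety.
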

\begin{proof} 
Let $\tilde{\varphi} =\varphi \circ F_E(\xh)$. Using \eqref{mapping-bounds},
we have
\begin{align*}
|\varphi|_{1,E} &\leq \|DF^{-1}_E\|_{0,\infty,E}\|J_E\|^{1/2}_{0,\infty,\Eh}|\tilde{\varphi}|_{1,\Eh} \leq C\|DF^{-1}_E\|_{0,\infty,E}\|J_E\|^{1/2}_{0,\infty,\Eh}\|\tilde{\varphi}\|_{\Eh} \\
			& \leq C \|DF^{-1}_E\|_{0,\infty,E}\|J_E\|^{1/2}_{0,\infty,\Eh} \|J_{F^{-1}_E}\|^{1/2}_{0,\infty,E}\|\varphi\|_{E} \leq Ch^{-1}h^{d/2}h^{-d/2}\|\varphi\|_{E} \leq Ch^{-1}\|\varphi\|_{E}.
\end{align*}
The general case follows by applying the above bound to any derivative of 
$\varphi$.
\end{proof}
We will make use of the following continuity bounds for the mixed projection 
operators $\Pi_*^k$ and $\Pi_{RT}^k$.
\begin{lemma}
There exists a constant $C$ independent of $h$ such that on $h^2$-parallelograms and regular $h^2$-parallelepipeds
\begin{align}
& \|\Pi_{*}^k\q\|_{j,E} \leq C\|\q\|_{j,E}, \quad j=1,\dots, k+1,\label{op-cont} \\
& \|\Pi^{k-1}_{RT}\q\|_{j,E} \leq C\|\q\|_{j,E}, \quad j=1,\dots, k, \label{rt-op-cont}
\end{align}
The above bounds also hold with $j=1$ on $h^2$-parallelepipeds. Furthermore,
on general quadrilaterals or hexahedra
\begin{equation}\label{div-bound}
\|\Pi_{*}^k\q\|_{\dvr,E} + \|\Pi^{k-1}_{RT}\q\|_{\dvr,E} \le C \|\q\|_{1,E}.
\end{equation}

		\end{lemma}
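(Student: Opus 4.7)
The plan is to transfer everything to the reference element via the Piola transformation and exploit both the continuity of $\hat{\Pi}_*^k$ on the finite-dimensional reference space and the scaling bounds \eqref{map-deriv-bounds}--\eqref{J-bound} and \eqref{vel-bound} that were established in the preceding lemmas. I will present only the argument for $\Pi_*^k$; the corresponding bounds for $\Pi_{RT}^{k-1}$ follow in the same way, since that operator is constructed via the same Piola framework and its reference projection enjoys identical continuity properties.

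First, on the reference element I would observe that $\hat{\Pi}_*^k: H^1(\hat E,\R^d)\to \Vh^k(\Eh)$ is a bounded linear operator. Indeed, the degrees of freedom \eqref{dof1}--\eqref{dof2} are continuous functionals on $H^1(\hat E,\R^d)$ (using the trace theorem for the facet moments), and the range is finite dimensional, so by norm equivalence
\[
\|\hat{\Pi}_*^k \hat\q\|_{j,\Eh} \le C\|\hat\q\|_{1,\Eh}\quad\text{for every integer }j\ge 0.
\]
Next, for $j\ge 1$ on $h^2$-parallelograms and regular $h^2$-parallelepipeds (and $j=1$ on $h^2$-parallelepipeds), I would combine \eqref{piola}, the chain rule, and the mapping bounds \eqref{map-deriv-bounds} to show
\[
\|\Pi_*^k \q\|_{j,E}^2 \le C\sum_{\alpha+\beta\le j}\left|\tfrac{1}{J_E}DF_E\right|_{\alpha,\infty,\Eh}^{2} |\widehat{\Pi_*^k\q}|_{\beta,\Eh}^2 \cdot \|J_E\|_{0,\infty,\Eh}^{-1}\cdot h^{-2j},
\]
and a dual estimate for $\|\q\|_{j,E}$ versus $|\hat\q|_{j,\Eh}$ via \eqref{vel-bound}. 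The powers of $h$ cancel since $\left|\tfrac{1}{J_E}DF_E\right|_{\alpha,\infty,\Eh}\sim h^{\alpha-d+1}$ while the volume factor contributes $h^{d}$ and the scaling of $\hat \q$ contributes $h^{j+(d-2)/2}$, leaving \eqref{op-cont}.

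For the divergence bound \eqref{div-bound} on general quadrilaterals or hexahedra, the key observation is that \eqref{rte-prop-div} together with $\hat\nabla\cdot\hat{\Pi}_*^k\hat\q\in\Wh^{k-1}(\Eh)$ identifies $\hat\nabla\cdot\hat{\Pi}_*^k\hat\q$ with the $L^2(\Eh)$-orthogonal projection $\hat{\mathcal Q}^{k-1}(\hat\nabla\cdot\hat\q)$, giving the trivial stability $\|\hat\nabla\cdot\hat{\Pi}_*^k\hat\q\|_{\Eh}\le \|\hat\nabla\cdot\hat\q\|_{\Eh}$. Using \eqref{rt-prop-4} and the scaling $\|J_E\|_{0,\infty,\Eh}\sim h^{d}$ one then obtains $\|\nabla\cdot\Pi_*^k\q\|_E\le C\|\nabla\cdot\q\|_E\le C\|\q\|_{1,E}$. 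For the $L^2$ part, Lemma~\ref{lem-q-sim-qhat} gives $\|\Pi_*^k\q\|_E\le Ch^{(2-d)/2}\|\hat{\Pi}_*^k\hat\q\|_{\Eh}\le Ch^{(2-d)/2}\|\hat\q\|_{1,\Eh}$, and a direct computation of $\hat\q=J_EDF_E^{-1}\tilde\q$ using only the $j=0,1$ mapping bounds (which do hold on general elements for the cofactor matrix $J_EDF_E^{-1}$, as it is linear in 2d and of total degree three in 3d with the stated $h$-scalings) yields $\|\hat\q\|_{1,\Eh}\le Ch^{d/2}\|\q\|_{1,E}$. Combining these two gives the required bound.

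The main technical obstacle, and the reason the two cases must be separated, is exactly the last step: on general (not $h^2$-perturbed) elements the higher-order derivatives of $DF_E$ and $J_EDF_E^{-1}$ are not sufficiently small in $h$, so only the weak $H^1\to H(\dvr)$ continuity \eqref{div-bound} is accessible, whereas on the $h^2$-perturbed families the cancellation in the mapping derivatives yields the full $H^j\to H^j$ continuity. Once the $h$-tracking is done carefully for $j=1$, the higher $j$ cases on $h^2$-parallelograms and regular $h^2$-parallelepipeds follow by induction using exactly the same manipulation and the extended range of validity of \eqref{map-deriv-bounds} and \eqref{vel-bound}.
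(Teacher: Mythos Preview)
Your argument for \eqref{div-bound} is essentially the same scaling route the paper sketches (it refers to the argument of \eqref{tmp4}--\eqref{tmp5} and to \cite{ingram2010multipoint}), and it goes through. There is a slip in your intermediate power of $h$: on general elements one only has $|J_E DF_E^{-1}|_{1,\infty,\Eh}\le Ch^{d-1}$, not $Ch^d$, so $\|\hat\q\|_{1,\Eh}\le Ch^{(d-2)/2}\|\q\|_{1,E}$ rather than $Ch^{d/2}\|\q\|_{1,E}$; fortunately the final combination with $h^{(2-d)/2}$ still yields the desired $O(1)$ constant.

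For \eqref{op-cont} your direct scaling strategy has a genuine gap. The only reference-element continuity you invoke is $\|\hat\Pi_*^k\hat\q\|_{j,\Eh}\le C\|\hat\q\|_{1,\Eh}$, and since all norms of the discrete $\hat\Pi_*^k\hat\q$ are equivalent on $\Eh$, pushing this back through the Piola map and Fa\`a di Bruno gives at best
\[
|\Pi_*^k\q|_{j,E}\le C\,h^{1-j}\,\|\q\|_{1,E},
\]
which is the inverse inequality applied to $\|\Pi_*^k\q\|_E\le C\|\q\|_{1,E}$ and does \emph{not} imply $\|\Pi_*^k\q\|_{j,E}\le C\|\q\|_{j,E}$ for $j\ge 1$. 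Your proposed ``induction'' cannot recover the missing power, because $\Pi_*^k$ does not commute with differentiation and nothing in the reference bound sees the higher regularity of $\q$.

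The paper closes this gap differently: it never tries to scale $|\Pi_*^k\q|_{j,E}$ directly. Instead it writes, with $\Pc_E^{j-1}$ the $L^2(E)$-projection onto $\Pc^{j-1}(E,\R^d)$,
\[
|\Pi_*^k\q|_{j,E}=|\Pi_*^k\q-\Pc_E^{j-1}\q|_{j,E}\le Ch^{-j}\|\Pi_*^k\q-\Pc_E^{j-1}\q\|_{E}
\le Ch^{-j}\big(\|\Pi_*^k\q-\q\|_{E}+\|\q-\Pc_E^{j-1}\q\|_{E}\big)\le C\|\q\|_{j,E},
\]
using the inverse inequality \eqref{inverse-ineq} on the discrete difference and then the already-established approximation bounds \eqref{app-prop-1}--\eqref{app-prop-2} (each contributing $h^{j}$). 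The essential ingredient you are missing is the polynomial-preserving property of $\Pi_*^k$, which is what allows the $h^{-j}$ from the inverse estimate to be cancelled by an approximation error of order $h^{j}$; a bare $H^1\to L^2$ continuity bound on the reference element cannot supply this.
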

		\begin{proof}
It follows from \eqref{app-prop-1} and the triangle inequality that
\begin{align*}
\|\Pi_{*}^k\q\|_{0,E} \leq \|\q\|_{1,E}.
\end{align*}
Let $\Pc_E^j$ be the $L^2(E)$-projection onto $\Pc^j(E,\R^d)$. 
It is well known that
\cite{ciarlet2002finite} $\|\q - \Pc_E^j\q\|_E \leq C h^{j+1} \|\q\|_{j+1,E}$.
Using \eqref{inverse-ineq}, we have for any $j=1,\dots,k+1$,
\begin{align*}
|\Pi_{*}^k\q|_{j,E} &=  |\Pi_{*}^k\q - \Pc_E^{j-1}\q|_{j,E} 
\leq Ch^{-j} \|\Pi_{*}^k\q -\Pc_E^{j-1}\q\|_{0,E} 
\leq Ch^{-j}( \|\Pi_{*}^k\q - \q\|_{0,E} + \|\q - \Pc_E^{j-1}\q\|_{0,E})  
\leq C \|\q\|_j ,
\end{align*}
where we also used \eqref{app-prop-1}, \eqref{app-prop-2} and
\eqref{l2-app-prop}. This completes the proof of \eqref{op-cont}. The
proof of \eqref{rt-op-cont} is similar. The proof of \eqref{div-bound}
uses a scaling argument similar to \eqref{tmp4}-\eqref{tmp5} for the
divergence and a scaling argument using \eqref{vel-bound} for the
$L^2$-norm. Details can be found in Lemma 3.6 in
\cite{ingram2010multipoint}.
\end{proof}

\begin{remark}
For the rest of the paper, all results are stated for
$h^2$-parallelograms and regular $h^2$-parallelepipeds. We note that
the results also hold in 3d on $h^2$-parallelepipeds with $k=1$, except for 
the pressure superconvergence.
\end{remark}

In the next two lemmas we bound two terms arising in the
error analysis due to the use of the quadrature rule. We use the notation
$\varphi \in W^{k,\infty}_{\Tc_h}$ if 
$\varphi \in W^{k,\infty}(E) \,\, \forall \, E \in \Tc_h$ and 
$\|\varphi\|_{k,\infty,E}$ is uniformly bounded independently of $h$. 
\begin{lemma}
On $h^2$-parallelograms and regular $h^2$-parallelepipeds, if $\Ki \in
W^{k,\infty}_{\Tc_h}$, then there exists a constant $C$ independent of
$h$ such that for all $\v\in \V^k_h$,
\begin{align}
|\inp[K^{-1}\Pi_*^k \u]{\v-\Pi^{k-1}_{RT}\v}_Q| 
\leq Ch^k\|\u\|_k\|\v\|. \label{quad-error-1}
\end{align}
\end{lemma}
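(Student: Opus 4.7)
The plan is to localize to each element $E\in\Tc_h$, pass to the reference element via the Piola transform, and then exploit the exactness identity \eqref{exactness-quad}. On each $E$ we have
$$\inp[\Ki\Pi_*^k\u]{\v-\Pi^{k-1}_{RT}\v}_{Q,E}=\inp[\Kci\widehat{\Pi_*^k\u}]{\vh-\Pih^{k-1}_{RT}\vh}_{\hat Q,\Eh}.$$
Since $\vh-\Pih^{k-1}_{RT}\vh\in\Vh^k(\Eh)$, applying \eqref{exactness-quad} (with the roles of the two arguments swapped, using the symmetry of the discrete quadrature form) lets me subtract any $\ph\in\Qc^{k-1}(\Eh,\R^d)$ from the left-hand vector argument without changing the value of the pairing.

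Next, I apply Cauchy--Schwarz in the discrete quadrature form. Since $\vh-\Pih^{k-1}_{RT}\vh$ lies in the fixed finite-dimensional space $\Vh^k(\Eh)$ on $\Eh$, its discrete quadrature norm is equivalent to its $L^2(\Eh)$-norm; the other factor I control pointwise by its sup norm at the quadrature nodes. Taking the infimum over $\ph\in\Qc^{k-1}(\Eh,\R^d)$ and invoking the Bramble--Hilbert lemma on the fixed reference cube yields
$$\inf_{\ph}\|\Kci\widehat{\Pi_*^k\u}-\ph\|_{0,\infty,\Eh}\leq C\,|\Kci\widehat{\Pi_*^k\u}|_{k,\infty,\Eh}.$$
I then expand the right-hand side by the Leibniz rule: the estimate \eqref{perm-bound} provides $|\Kci|_{j,\infty,\Eh}\leq Ch^{j-d+2}$, while norm equivalence on $\Vh^k(\Eh)$ together with \eqref{vel-bound} and the continuity \eqref{op-cont} yields $|\widehat{\Pi_*^k\u}|_{k-j,\infty,\Eh}\leq Ch^{k-j+(d-2)/2}\|\u\|_{k,E}$.

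The two exponents of $h$ combine to the $j$-independent power $h^{k+1-d/2}$ in every term of the Leibniz sum, so the sup-norm factor is bounded by $Ch^{k+1-d/2}\|\u\|_{k,E}$. For the second, finite-element, factor, \eqref{q-sim-qhat} combined with \eqref{rt-cont} gives $\|\vh-\Pih^{k-1}_{RT}\vh\|_{\Eh}\leq Ch^{(d-2)/2}\|\v\|_E$. Multiplying the two estimates, the exponents of $h$ add to exactly $k$, producing the element-wise bound $Ch^k\|\u\|_{k,E}\|\v\|_E$, and a discrete Cauchy--Schwarz over $E\in\Tc_h$ yields the asserted global estimate. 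The hard part will be the scaling bookkeeping, specifically verifying the exact cancellation between the negative power $h^{j-d+2}$ coming from $\Kci$ and the positive power $h^{k-j+(d-2)/2}$ coming from the derivatives of the mapped polynomial, so that even the worst term in the Leibniz expansion (at $j=0$, where $\Kci$ itself contributes $h^{2-d}$) still produces the correct rate.
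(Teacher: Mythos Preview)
Your proposal is correct and follows essentially the same approach as the paper: localize, map to the reference element, use the orthogonality \eqref{exactness-quad} to subtract a $\Qc^{k-1}$-approximation from $\Kci\widehat{\Pi_*^k\u}$, bound the residual via a Bramble--Hilbert argument and the Leibniz rule together with the scaling estimates \eqref{perm-bound}, \eqref{vel-bound}, \eqref{op-cont}, and finish with \eqref{q-sim-qhat}--\eqref{rt-cont} on the second factor. The only cosmetic difference is that the paper applies Bramble--Hilbert in the $H^k(\Eh)$-seminorm (using the $L^2$-projection $\hat\Pc^{k-1}$) whereas you work in $W^{k,\infty}(\Eh)$; both produce the identical power $h^{k+1-d/2}$ on the first factor and $h^{(d-2)/2}$ on the second, yielding the claimed $h^k$.
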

\begin{proof}
Let $\hat\Pc^k$ be the $L^2(\Eh)$-orthogonal projection onto $\Pc^k(\Eh,\R^d)$. 
For any element $E\in \Tc_h$, we have
\begin{align*}
\inp[K^{-1}\Pi_*^k \u]{\v-\Pi^{k-1}_{RT}\v}_{Q,E} 
&= \inp[\Kc^{-1}\Pih_*^k \hat{\u}]{\hat{\v}-\Pih^{k-1}_{RT}\hat{\v}}_{Q,\Eh} \\
&= \inp[\hat\Pc^{k-1}(\Kc^{-1}\Pih_*^k \hat{\u})]{\hat{\v}
-\Pih^{k-1}_{RT}\hat{\v}}_{Q,\Eh} 
+ \inp[\Kc^{-1}\Pih_*^k \hat{\u}-\hat\Pc^{k-1}(\Kc^{-1}\Pih_*^k \hat{\u})]{\hat{\v}-\Pih^{k-1}_{RT}\hat{\v}}_{Q,\Eh}.
\end{align*}
		The first term on right is equal to zero due to \eqref{exactness-quad}. For the second term we use Bramble-Hilbert lemma:
		\begin{align*}
		\left|\inp[\Kc^{-1}\Pih_*^k \hat{\u}-\hat\Pc^{k-1}(\Kc^{-1}\Pih_*^k \hat{\u})]{\hat{\v}-\Pih^{k-1}_{RT}\hat{\v}}_{Q,\Eh}\right|&\leq C |\Kc^{-1}\Pih_*^k \hat{\u}|_{k,\Eh}\|\hat{\v}-\Pih^{k-1}_{RT}\hat{\v}\|_{0,\Eh}.
		\end{align*}
		Using \eqref{perm-bound} and \eqref{vel-bound}, we obtain
		\begin{align*}
		|\Kc^{-1}\Pih_*^k \hat{\u}|_{k,\Eh} \leq C\sum_{i=0}^k |\Kc^{-1}|_{k-i,\infty,\Eh}|\Pih_*^k \hat{\u}|_{i,\Eh} &\leq C\sum_{i=0}^k h^{k-i-d+2}\|K^{-1}\|_{k-i,\infty,E}h^{i+(d-2)/2}\|\Pi_*^k \u\|_{i,E} \\
		&\leq Ch^{k-d/2+1} \|K^{-1}\|_{k,\infty,E}\|\Pi_*^k \u\|_{k,E}.
		\end{align*}
Therefore, using \eqref{vel-bound}, \eqref{op-cont} and \eqref{rt-cont}, 
we get
		\begin{align*}
		\left|\inp[\Kc^{-1}\Pih_*^k \hat{\u}-\hat\Pc^{k-1}(\Kc^{-1}\Pih_*^k \hat{\u})]{\hat{\v}-\Pih^{k-1}_{RT}\hat{\v}}_{Q,\Eh}\right|&
		\leq Ch^{k-d/2+1} \|K^{-1}\|_{k,\infty,E}\|\u\|_{k,E}h^{(d-2)/2}\|\v\|_{0,E}\\
		&\leq Ch^k\|K^{-1}\|_{k,\infty,E}\|\u\|_{k,E}\|\v\|_{0,E}.
		\end{align*}
		The proof is completed by summing over all elements.
	\end{proof}
\begin{lemma}
On $h^2$-parallelograms and regular $h^2$-parallelepipeds, if $\Ki \in
W^{k,\infty}_{\Tc_h},$ then there exists a constant $C$ independent of
mesh size such that for all $\q\in \V^k_h$ and $\v \in
\V_{RT,h}^{k-1}$
\begin{align}
|\sigma\inp[K^{-1}\q]{\v}| \leq C \sum_{E \in \Tc_h}
h^k \|\Ki\|_{k,\infty,E}\|\q\|_{k,E}\|\v\|_E. \label{quad-error-2}
\end{align}
	\end{lemma}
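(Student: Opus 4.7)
The plan is to adapt the strategy from the proof of \eqref{quad-error-1}, now with the roles of the Raviart--Thomas and bubble components interchanged, since here $\vh\in\Vh^{k-1}_{RT}(\Eh)$ sits entirely in the RT part. First I would localize to individual elements via the Piola mapping, reducing the task to bounding $\hat\sigma_E(\Kci\qh,\vh)$ for each $E\in\Tc_h$.

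The essential observation is that the tensor-product Gauss--Lobatto rule with $k+1$ nodes per direction integrates $\Qc^{2k-1}(\Eh)$ exactly, and that for any $\hat\phi\in\Qc^{k-1}(\Eh,\R^d)$ and $\vh\in\Vh^{k-1}_{RT}(\Eh)$ each product $\hat\phi_i\vh_i$ has degree at most $(k-1)+k=2k-1$ in $\xh_i$ and at most $2k-2$ in the remaining variables, hence lies in $\Qc^{2k-1}(\Eh)$. Consequently $\hat\sigma_E(\hat\phi,\vh)=0$, and with $\hat\Pc^{k-1}$ denoting the componentwise $L^2(\Eh)$-projection onto $\Qc^{k-1}(\Eh,\R^d)$ this gives
\begin{equation*}
\hat\sigma_E(\Kci \qh, \vh) = \hat\sigma_E\bigl(\Kci \qh - \hat\Pc^{k-1}(\Kci \qh),\,\vh\bigr).
\end{equation*}

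To bound the right-hand side I would split $\hat\sigma_E$ into its integral and quadrature halves. The integral half is handled by Cauchy--Schwarz and the Bramble--Hilbert lemma, yielding $\|\Kci\qh - \hat\Pc^{k-1}(\Kci\qh)\|_{0,\Eh} \le C|\Kci\qh|_{k,\Eh}$. Expanding via the Leibniz rule and inserting \eqref{perm-bound} and \eqref{vel-bound} gives
\begin{equation*}
|\Kci\qh|_{k,\Eh} \le C \sum_{i+j=k} h^{i-d+2}\|\Ki\|_{i,\infty,E}\, h^{j+(d-2)/2}\|\q\|_{j,E} \le C h^{k+1-d/2}\|\Ki\|_{k,\infty,E}\|\q\|_{k,E},
\end{equation*}
and multiplying by $\|\vh\|_{0,\Eh}\sim h^{(d-2)/2}\|\v\|_{0,E}$ from Lemma~\ref{lem-q-sim-qhat} produces exactly the desired factor $h^k\|\Ki\|_{k,\infty,E}\|\q\|_{k,E}\|\v\|_{0,E}$.

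The main technical obstacle is the quadrature half $|(\Kci\qh - \hat\Pc^{k-1}(\Kci\qh),\vh)_{\hat Q,\Eh}|$, because here the integrand is no longer polynomial. I would apply the discrete Cauchy--Schwarz inequality for $(\cdot,\cdot)_{\hat Q,\Eh}$, use that $\|\vh\|_{\hat Q,\Eh}\sim\|\vh\|_{0,\Eh}$ on finite-dimensional polynomial spaces (which follows from the norm equivalence argument already used in Lemma~\ref{inner-prod-norm}), and then control the non-polynomial factor by the $L^\infty$-version of Bramble--Hilbert, $\|g - \hat\Pc^{k-1}g\|_{0,\infty,\Eh} \le C|g|_{k,\infty,\Eh}$. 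Expanding $|\Kci\qh|_{k,\infty,\Eh}$ by Leibniz and invoking the norm equivalence on finite-dimensional polynomial spaces to pass from $|\qh|_{\cdot,\infty,\Eh}$ to $|\qh|_{\cdot,\Eh}$, the scaling is identical to the one in the integral half and produces the same $h^k$ factor. A final summation over $E\in\Tc_h$ yields \eqref{quad-error-2}.
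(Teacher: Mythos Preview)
Your proposal is correct and follows essentially the same approach as the paper: subtract a low-order polynomial projection of $\Kci\qh$, use exactness of the Gauss--Lobatto rule on $\Qc^{2k-1}$ to kill the projected term, and bound the remainder via Bramble--Hilbert together with the Leibniz expansion and the scaling estimates \eqref{perm-bound}, \eqref{vel-bound}. The only cosmetic differences are that the paper projects onto $\Pc^{k-1}$ rather than $\Qc^{k-1}$ and applies Bramble--Hilbert directly to the functional $\phi\mapsto\hat\sigma_{\Eh}(\phi,\vh)$ in one stroke, whereas you split $\hat\sigma$ into its integral and quadrature halves and handle each separately.
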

	\begin{proof}
For each $E\in \Tc_h$ we have
		\begin{align*}
		\sigma_E\inp[K^{-1}\q]{\v} = \sigma_{\Eh}\inp[\hat\Pc^{k-1}(\Kc^{-1}\qh)]{\vh} + \sigma_{\Eh}\inp[\Kc^{-1}\qh-\hat\Pc^{k-1}(\Kc^{-1}\qh)]{\vh}.
		\end{align*}	
The first term on the right is equal to zero, since the tensor-product
Gauss-Lobatto quadrature rule is exact for polynomials of degree up to
$2k-1$. Using the Bramble-Hilbert lemma, \eqref{perm-bound}
and \eqref{vel-bound}, we bound the second term as follows:
		\begin{align*}
		\left|\sigma_{\Eh}\inp[\Kc^{-1}\qh-\hat\Pc^{k-1}(\Kc^{-1}\qh)]{\vh}\right|& \leq C|\Kc^{-1}\qh|_{k,\Eh}\|\vh\|_{\Eh} \leq C\sum_{i=0}^k |\Kc^{-1}|_{k-i,\infty,\Eh}|\qh|_{i,\Eh} \|\vh\|_{\Eh} \\
		&\leq Ch^{k-d/2+1} \|K^{-1}\|_{k,\infty,E}\|\q\|_{k,E}h^{(d-2)/2}\|\v\|_{E} \\
		&\leq Ch^k\|K^{-1}\|_{k,\infty,E}\|\q\|_{k,E}\|\v\|_{E}.
		\end{align*}
		Summing over all $E\in \Tc_h$, we obtain \eqref{quad-error-2}.
	\end{proof}

	\subsection{Optimal convergence for the velocity.}
	We subtract the numerical method \eqref{mfmfe-1}-\eqref{mfmfe-2} from the variational formulation \eqref{weak-1}-\eqref{weak-2} to obtain the error equations:
	\begin{align}
	\inp[\Ki\u]{\v} - \inp[\Ki\u_h]{\v}_Q - \inp[p-p_h]{\dvrg\v} &=  - \gnp[g -\Rc_h^{k-1} g]{\v\cdot\n}_{\Gd} , && \v\in\V^k_h, \label{error-eq-1}\\
	\inp[\dvrg(\u-\u_h)]{w} &= 0, && w\in W^{k-1}_h. \label{error-eq-2} 
	\end{align}
	Note that due to \eqref{rte-prop-div}, it follows from \eqref{error-eq-2} that
	\begin{align}
		\dvrg (\Pi_*^k\u -\u_h) = 0. \label{error-div-vanish}
	\end{align}
	If we take $\v = \Pi_*^k \u - \u_h$ in \eqref{error-eq-1}, then 
	\begin{align}
	\inp[\Ki\u]{\Pi_*^k \u - \u_h} - \inp[\Ki\u_h]{\Pi_*^k \u - \u_h}_Q 
+ \gnp[g -\Rc_h^{k-1} g]{(\Pi_*^k \u - \u_h )\cdot\n}_{\Gd} &= 0 .
	\end{align}
	Let $\w \equiv \Pi_*^k \u - \u_h$ then an algebraic manipulation of the above gives 
	\begin{align*}
	\inp[\Ki \w]{\w}_Q = -\inp[\Ki\u]{\w}+\inp[\Ki\Pi_*^k\u]{\w}_Q  
- \gnp[g -\Rc_h^{k-1} g]{\w\cdot\n}_{\Gd} .
	\end{align*}
	Moreover, rewriting the right-hand side gives 
	\begin{align} 
	\inp[\Ki \w]{\w}_Q 
	&= -\inp[\Ki\u]{\w-\Pi^{k-1}_{RT}\w} - \gnp[g -\Rc_h^{k-1} g]{\w\cdot\n}_{\Gd} -\inp[\Ki(\u-\Pi_*^k\u)]{\Pi^{k-1}_{RT}\w}  \label{er-eq-1-2} \\
	& \quad -\inp[\Ki\Pi_*^k\u]{\Pi^{k-1}_{RT}\w} + \inp[\Ki\Pi_*^k\u]{\Pi^{k-1}_{RT}\w}_Q + \inp[\Ki\Pi_*^k\u]{\w-\Pi^{k-1}_{RT}\w}_Q. \nonumber
	\end{align} 
Testing \eqref{weak-1} with $\w - \Pi^{k-1}_{RT}\w $ and using that
$\dvrg\w = \dvrg\Pi^{k-1}_{RT}\w = 0$, see \eqref{error-div-vanish} 
and \eqref{div-preserv},
we can rewrite the first two terms in \eqref{er-eq-1-2} as
\begin{align*}
	-\inp[\Ki\u]{\w - \Pi^{k-1}_{RT}\w}  - \gnp[g -\Rc_h^{k-1} g]{\w\cdot\n}_{\Gd} = \gnp[g]{(\w - \Pi^{k-1}_{RT}\w)\cdot\n}_{\Gd}  - \gnp[g -\Rc_h^{k-1} g]{\w\cdot\n}_{\Gd}  = 0,
	\end{align*}
using that, due to \eqref{R-prop-1}--\eqref{R-prop-2}, 
$\gnp[\Rc^{k-1}_h g]{(\w - \Pi^{k-1}_{RT}\w)\cdot\n}_{\Gd} = 0$
and  $\gnp[g - \Rc^{k-1}_h g]{\Pi^{k-1}_{RT}\w \cdot\n}_{\Gd} = 0$.
For the third term on the right in \eqref{er-eq-1-2} 
we use \eqref{app-prop-1} and \eqref{rt-cont} to get
	\begin{align*}
	|\inp[\Ki(\u-\Pi_*^k\u)]{\Pi^{k-1}_{RT}\w}| \leq Ch^k\|\Ki\|_{0, \infty}\|\u\|_k\|\w\|.
	\end{align*}
	To bound the fourth and fifth terms on the right in \eqref{er-eq-1-2}, we use \eqref{quad-error-2}, \eqref{op-cont} and \eqref{rt-cont}:
	\begin{align*}
	|-\inp[\Ki\Pi_*^k\u]{\Pi^{k-1}_{RT}\w} + \inp[\Ki\Pi_*^k\u]{\Pi^{k-1}_{RT}\w}_Q| = |\s(\Ki\Pi_*^k\u,\Pi^{k-1}_{RT}\w )| \leq Ch^k\|\Ki\|_{k,\infty}\|\u\|_{k}\|\w\|.
	\end{align*}
	For the last term on the right in \eqref{er-eq-1-2} we use \eqref{quad-error-1}:
	\begin{align*}
	|\inp[\Ki\Pi_*^k\u]{\w-\Pi^{k-1}_{RT}\w}_Q| \leq Ch^k\|\Ki\|_{k,\infty}\|\u\|_k\|\w\|.
	\end{align*}
Combining the above bounds, we obtain from \eqref{er-eq-1-2} that
	\begin{align}
	\inp[\Ki(\Pi_*^k\u -\u_h)]{\Pi_*^k \u -\u_h}_Q   
\leq  Ch^k\|\Ki\|_{k,\infty}\|\u\|_k\|\Pi_*^k \u -\u_h\|,
	\end{align}
implying that
\begin{align}
\|\Pi_*^k\u -\u_h\| \leq Ch^k\|\Ki\|_{k,\infty}\|\u\|_k. \label{er-eq-1-4}
\end{align}
Bounds \eqref{er-eq-1-4} and \eqref{error-div-vanish}, together with 
\eqref{app-prop-1} and \eqref{app-prop-3}, result in the following theorem.
\begin{theorem}\label{th-velocity-error}
Assume that the partition $\Tc_h$ consists of $h^2$-parallelograms in
2d or regular $h^2$-parallelepipeds in 3d. If $\Ki \in
W^{k,\infty}_{\Tc_h}$, for the velocity $\u_h$ of the MFMFE method
\eqref{mfmfe-1}-\eqref{mfmfe-2}, there exists a constant $C$
independent of $h$ such that
\begin{align}
\|\u-\u_h\| &\leq Ch^k\|\u\|_k, \label{velocity-error-est-1} \\
\|\dvrg(\u-\u_h)\| &\leq Ch^k \|\dvrg \u\|_k. \label{velocity-error-est-2}
\end{align}
\end{theorem}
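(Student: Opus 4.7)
The plan is to assemble the theorem from two pieces: the projection error $\u - \Pi_*^k \u$, controlled by the approximation properties of the enhanced Raviart--Thomas projection proved in Lemma \ref{app-prop-rtbubbles}, and the discrete error $\Pi_*^k \u - \u_h$, which has already been bounded in the discussion preceding the theorem. So the proof is essentially a triangle-inequality assembly; the heavy lifting (orthogonality tricks, quadrature-error bounds \eqref{quad-error-1} and \eqref{quad-error-2}, Piola-transformation scaling) was done in the lemmas above.

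For the $L^2$ estimate \eqref{velocity-error-est-1}, I would write
\[
\|\u - \u_h\| \le \|\u - \Pi_*^k \u\| + \|\Pi_*^k \u - \u_h\|,
\]
bound the first term by $C h^k \|\u\|_k$ using \eqref{app-prop-1} with $j=k$, and bound the second term by $C h^k \|K^{-1}\|_{k,\infty} \|\u\|_k$ using \eqref{er-eq-1-4}. The constants absorb the $W^{k,\infty}$-norm of $K^{-1}$, which is assumed uniformly bounded.

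For the divergence estimate \eqref{velocity-error-est-2}, the key observation is \eqref{error-div-vanish}, namely $\dvrg(\Pi_*^k \u - \u_h) = 0$, which follows directly from the error equation \eqref{error-eq-2} and the commuting property \eqref{rte-prop-div-E}. Therefore
\[
\dvrg(\u - \u_h) = \dvrg(\u - \Pi_*^k \u),
\]
and the right-hand side is controlled by $C h^k \|\dvrg \u\|_k$ via \eqref{app-prop-3} with $j=k$.

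Since the whole proof is a direct corollary of already-established estimates, there is really no obstacle left at this point; the technical difficulties were concentrated in (i) showing the commuting divergence property of $\Pi_*^k$, (ii) deriving the approximation estimates on $h^2$-perturbed parallelograms and regular $h^2$-parallelepipeds (which required the delicate derivative bounds on $F_E$ and $J_E$), and (iii) controlling the quadrature error between $(\Ki \cdot, \cdot)$ and $(\Ki \cdot, \cdot)_Q$ using the Gauss--Lobatto exactness together with the $\hat\Pc^{k-1}$-orthogonality \eqref{exactness-quad}. With those in hand, the statement of Theorem \ref{th-velocity-error} is a one-line assembly.
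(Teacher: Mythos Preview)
Your proposal is correct and matches the paper's own argument exactly: the paper states just before the theorem that ``Bounds \eqref{er-eq-1-4} and \eqref{error-div-vanish}, together with \eqref{app-prop-1} and \eqref{app-prop-3}, result in the following theorem,'' which is precisely the triangle-inequality assembly you describe. There is no gap and no meaningful difference in approach.
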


	\section{Error estimates for the pressure.}
	In this section we use a standard inf-sup argument to prove optimal convergence for the pressure. We also employ a duality argument to establish superconvergence for the pressure.
	\subsection{Optimal convergence for the pressure.}
	\begin{theorem}\label{th-pressure-error-1}
Assume that the partition $\Tc_h$ consists of $h^2$-parallelograms in
2d or regular $h^2$-parallelepipeds in 3d. If $\Ki \in
W^{k,\infty}_{\Tc_h}$, then for the pressure $p_h$ of the MFMFE method
\eqref{mfmfe-1}-\eqref{mfmfe-2}, there exists a constant $C$
independent of $h$ such that
\begin{align}
\|p-p_h\| &\leq Ch^k\left(\|\u\|_k +\|p\|_k\right). \label{pressure-error-est-1}
		\end{align}
	\end{theorem}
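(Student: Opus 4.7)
The plan is a standard inf-sup argument combined with the already-established velocity estimate. First, via the triangle inequality
\[
\|p - p_h\| \le \|p - \Qc_h^{k-1}p\| + \|\Qc_h^{k-1}p - p_h\|,
\]
the first term is immediately $O(h^k\|p\|_k)$ by \eqref{l2-app-prop}. For the finite element difference $\Qc_h^{k-1}p - p_h$, I would invoke the inf-sup condition of Lemma~\ref{lemma-inf-sup} together with the orthogonality \eqref{l2-prop-div} to reduce matters to controlling $(p-p_h,\nabla\cdot\v)$ over arbitrary $\v\in\V_h^k$.

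Using the error equation \eqref{error-eq-1},
\[
(p-p_h,\nabla\cdot\v) = (K^{-1}\u,\v) - (K^{-1}\u_h,\v)_Q + \langle g - \Rc_h^{k-1}g,\, \v\cdot\n\rangle_{\Gd},
\]
I would insert $\Pi_*^k\u$ into the bilinear forms and split the test function as $\v = \Pi^{k-1}_{RT}\v + (\v - \Pi^{k-1}_{RT}\v)$. This produces five contributions: (i) an approximation residual $(K^{-1}(\u-\Pi_*^k\u), \v)$, bounded by \eqref{app-prop-1}; (ii) a quadrature error $\sigma(K^{-1}\Pi_*^k\u,\Pi^{k-1}_{RT}\v)$, bounded by \eqref{quad-error-2} together with the continuity \eqref{rt-cont}; (iii) a ``bubble'' quadrature term $(K^{-1}\Pi_*^k\u,\v - \Pi^{k-1}_{RT}\v)_Q$, bounded by \eqref{quad-error-1}; (iv) a residual $(K^{-1}(\Pi_*^k\u - \u_h),\v)_Q$, controlled by Cauchy--Schwarz in the quadrature inner product (Lemma~\ref{inner-prod-norm}) and the velocity bound \eqref{er-eq-1-4}; and (v) an exact bilinear form $(K^{-1}\Pi_*^k\u,\v - \Pi^{k-1}_{RT}\v)$ which I will pair with the boundary term already present in the error equation.

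The main obstacle is term (v): it has no quadrature to exploit and $\|\v - \Pi^{k-1}_{RT}\v\|$ is only $O(\|\v\|)$. The key trick is to split $\Pi_*^k\u = (\Pi_*^k\u - \u) + \u$; the approximation piece is controlled by \eqref{app-prop-1}, while for $(K^{-1}\u,\v - \Pi^{k-1}_{RT}\v)$ I would use $K^{-1}\u = -\nabla p$ and integrate by parts. Because $\nabla\cdot(\v-\Pi_{RT}^{k-1}\v)=0$ by \eqref{div-preserv} and $\v\cdot\n=\Pi_{RT}^{k-1}\v\cdot\n=0$ on $\Gn$, only the Dirichlet boundary integral $-\langle g,\, (\v-\Pi_{RT}^{k-1}\v)\cdot\n\rangle_{\Gd}$ survives. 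The orthogonality $(\v-\Pi_{RT}^{k-1}\v)\cdot\n \perp \Rc^{k-1}(e)$ coming from \eqref{RT-dof-1} lets me replace $g$ by $g - \Rc_h^{k-1}g$, and the resulting expression cancels exactly with the boundary contribution from the error equation (which, by the same orthogonality, can equally be written as pairing against $(\v - \Pi_{RT}^{k-1}\v)\cdot\n$). Gathering the five bounds yields $|(p-p_h,\nabla\cdot\v)| \le Ch^k\|\u\|_k\|\v\|_{\dvr}$, which, combined with the approximation estimate for $\|p - \Qc_h^{k-1}p\|$, proves \eqref{pressure-error-est-1}.
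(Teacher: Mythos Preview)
Your argument is correct, but the paper takes a noticeably shorter route. Instead of invoking the inf-sup condition of Lemma~\ref{lemma-inf-sup} for the enhanced pair $\V_h^k\times W_h^{k-1}$, the paper observes that the same inf-sup proof works for the smaller pair $\V_{RT,h}^{k-1}\times W_h^{k-1}$ and tests the error equation only against $\v\in\V_{RT,h}^{k-1}$. This single choice eliminates most of the work you do: the boundary term $\langle g-\Rc_h^{k-1}g,\v\cdot\n\rangle_{\Gd}$ vanishes immediately by \eqref{R-prop-1}, and there is no need to split $\v=\Pi_{RT}^{k-1}\v+(\v-\Pi_{RT}^{k-1}\v)$, so your terms (iii), (v), and the entire integration-by-parts cancellation argument disappear. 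What remains is just three pieces,
\[
\inp[\Ki(\Pi_*^k\u-\u_h)]{\v}_Q - \inp[\Ki(\Pi_*^k\u-\u)]{\v} + \sigma\inp[\Ki\Pi_*^k\u]{\v},
\]
bounded respectively by \eqref{er-eq-1-4}, \eqref{app-prop-1}, and \eqref{quad-error-2}. Your approach effectively re-runs the boundary-cancellation machinery already used in the velocity estimate \eqref{er-eq-1-2}; the paper avoids this repetition by restricting the test space. Both proofs yield the same bound, but the paper's is more economical.
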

	\begin{proof}
We first note that the RT$_{k-1}$ spaces $\V^{k-1}_{RT,h} \times W^{k-1}_h$ on 
general quadrilaterals and hexahedra satisfy an inf-sup condition similar to
\eqref{inf-sup}. The proof is the same as the argument in 
Lemma~\ref{lemma-inf-sup}. Hence, using 
\eqref{error-eq-1} and \eqref{R-prop-1}, we obtain
\begin{align*}
\|\Qc^{k-1}_h p -p_h \| &\leq \frac{1}{\beta} 
\sup_{0\neq \v\in V^{k-1}_{RT,h}}\frac{\inp[\Qc^{k-1}_h p -p_h ]{\dvrg \v}}{\|\v\|_{\dvr}} \\
&= \frac{1}{\beta} \sup_{0\neq \v\in V^{k-1}_{RT,h}}\frac{\inp[\Ki(\Pi_*^k\u -\u_h)]{\v}_Q - \inp[\Ki(\Pi_*^k\u -\u)]{\v}+\s(\Ki\Pi_*^k\u,\v)}{\|\v\|_{\dvr}} \\
& \leq \frac{C}{\beta}h^k\|\Ki\|_{k,\infty}\|\u\|_k,
		\end{align*}
where we used \eqref{er-eq-1-4}, \eqref{app-prop-1}, \eqref{quad-error-2},
and \eqref{op-cont} in the last inequality.
The result then follows from \eqref{l2-app-prop} and the triangle inequality.
\end{proof}

	\subsection{Superconvergence of the pressure.}
In this subsection we prove superconvergence of the pressure, i.e., we
show that $\|\Qc_h^{k-1} p - p_h\|$ is $\mathcal{O}(h^{k+1})$
for the MFMFE method of order $k$. We also apply local postprocessing
to obtain an improved approximation $p_h^* \in W_h^k$ such that
$\|p - p_h^*\|$ is $\mathcal{O}(h^{k+1})$.

The following bound on the quadrature error will be used in the
superconvergence analysis.
	
	\begin{lemma}
		On $h^2$-parallelograms and regular $h^2$-parallelepipeds, if $\Ki\in W^{k+1,\infty}_{\Tc_h}$, then for all $\q\in \V^k_h$ and $\v\in \V^{0}_{RT,h}$, there exists a positive constant $C$ independent of $h$ such that
		\begin{align}
			|\sigma\inp[\Ki\q]{\v}| \le 
C \sum_{E\in\Tc_h}h^{k+1}\|K^{-1}\|_{k+1,\infty,E}\|\q\|_{k+1,E}\|\v\|_{1,E}.\label{super-q-error}
		\end{align}
	\end{lemma}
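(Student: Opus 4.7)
The plan is to mirror the argument for Lemma~\eqref{quad-error-2}, but gain one extra power of $h$ by exploiting that every component of $\vh \in \Vh^{0}_{RT}(\Eh)$ is polynomial of total degree at most one. On each element $E$, I would pass to the reference cube and split using the $L^2(\Eh)$-orthogonal projection $\hat\Pc^k$ onto $\Pc^k(\Eh,\R^d)$:
\begin{align*}
\hat\sigma_{\Eh}(\Kci\qh, \vh)
= \hat\sigma_{\Eh}(\hat\Pc^k(\Kci\qh), \vh)
+ \hat\sigma_{\Eh}((I - \hat\Pc^k)(\Kci\qh), \vh).
\end{align*}

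For $k \ge 2$, the first term vanishes: the product $\hat\Pc^k(\Kci\qh)\cdot \vh$ has total degree at most $k+1 \le 2k-1$, so it lies in the exactness range $\Qc^{2k-1}$ of the $(k+1)$-point tensor-product Gauss-Lobatto rule. For the second term, the key observation is that $\vh \in \Vh^{0}_{RT}(\Eh) \subset \Pc^1(\Eh,\R^d) \subset \Pc^k(\Eh,\R^d)$, which by $L^2$-orthogonality of $\hat\Pc^k$ forces the exact integral $\int_{\Eh}(I - \hat\Pc^k)(\Kci\qh)\cdot\vh\,d\xbh$ to vanish. Hence the quadrature error reduces to (minus) the quadrature sum alone, which is controlled by the $L^\infty(\Eh)$-norm of the integrand. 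The sup-norm Bramble--Hilbert lemma gives $\|(I - \hat\Pc^k)(\Kci\qh)\|_{0,\infty,\Eh} \le C|\Kci\qh|_{k+1,\infty,\Eh}$, and applying the product rule together with \eqref{perm-bound}, \eqref{vel-bound}, and norm equivalence on the finite-dimensional polynomial space containing $\qh$ yields $|\Kci\qh|_{k+1,\infty,\Eh} \le Ch^{k+2-d/2}\|\Ki\|_{k+1,\infty,E}\|\q\|_{k+1,E}$. Combined with the scaling $\|\vh\|_{0,\infty,\Eh} \le Ch^{(d-2)/2}\|\v\|_{1,E}$, this delivers the targeted $h^{k+1}$ order on each element.

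The case $k = 1$ is not covered by the above exactness argument, since then $\hat\Pc^1(\Kci\qh)\cdot\vh$ can have degree two in one variable and so is not exactly integrated by the two-point Gauss-Lobatto (trapezoidal) rule. I would handle this case by applying the Bramble--Hilbert lemma directly to $\hat\sigma_{\Eh}(\Kci\qh\cdot\vh)$: since the rule vanishes on $\Pc^{2k-1} = \Pc^1$, one has $|\hat\sigma_{\Eh}(\phi)| \le C|\phi|_{2,\Eh}$, and because $|\vh|_{j,\Eh} = 0$ for $j \ge 2$ the product rule reduces this bound to a sum of $|\Kci\qh|_{2,\Eh}\|\vh\|_{0,\Eh}$ and $|\Kci\qh|_{1,\Eh}|\vh|_{1,\Eh}$, each of which scales as $h^2$ by \eqref{perm-bound} and \eqref{vel-bound}. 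Summing the element-wise bounds completes the argument. The main obstacle I anticipate is keeping track of the scaling factors in the product-rule expansion for $|\Kci\qh|_{k+1,\Eh}$ (and for its sup-norm counterpart) so that they combine exactly with those of $\vh$ to produce $h^{k+1}$ rather than a lower order; recognizing that $L^2$-orthogonality against $\vh \in \Pc^1 \subset \Pc^k$ annihilates the exact integral of the remainder is the essential insight that unlocks one additional order beyond Lemma~\eqref{quad-error-2}.
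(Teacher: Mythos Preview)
Your proposal is correct, but the paper's proof is considerably simpler and uniform across all $k \ge 1$. Rather than splitting via $\hat\Pc^k$ and invoking the extra $L^2$-orthogonality of the remainder against $\vh \in \Pc^1$, the paper applies the Bramble--Hilbert lemma directly to the functional $\phi \mapsto \hat\sigma_{\Eh}(\phi)$ at order $k+1$ (it vanishes on $\Pc^k$ since $\Pc^k \subset \Qc^{2k-1}$ for $k \ge 1$), obtaining $|\hat\sigma_{\Eh}(\Kci\qh\cdot\vh)| \le C|\Kci\qh\cdot\vh|_{k+1,\Eh}$. Then the product rule is applied, and because $\vh \in \Vh^0_{RT}(\Eh)$ is componentwise linear, only the terms with $|\vh|_{1,\Eh}$ and $\|\vh\|_{\Eh}$ survive, giving precisely the two bracketed sums involving $|\Kci|_{i,\infty,\Eh}|\qh|_{k-i,\Eh}$ and $|\Kci|_{i,\infty,\Eh}|\qh|_{k+1-i,\Eh}$. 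Scaling with \eqref{perm-bound} and \eqref{vel-bound} then yields $h^{k+1}$ directly. In effect, your $k=1$ argument \emph{is} the paper's argument, and the paper simply uses that same argument for all $k$; your projection-and-orthogonality detour for $k\ge2$ is valid but unnecessary, and it forces you into sup-norm estimates and finite-dimensional norm equivalences that the paper avoids entirely.
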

	\begin{proof}
For any element $E$ we have $\sigma_E\inp[\Ki\q]{\v} =
\hat{\sigma}_{\Eh}\inp[\Kci\qh]{\vh}$. Since the quadrature rule is
{\blue exact for polynomials of degree up to $2k-1$} and $k \ge 1$, 
then it is exact for polynomials of degree up to $k$. An application of 
the Bramble-Hilbert lemma implies
\begin{align*}
\left| \hat{\sigma}_{\Eh}\inp[\Kci\qh]{\vh} \right| \le C\bigg( \big[\sum_{i=0}^{k} |\Kci|_{i,\infty,\Eh} |\qh|_{k-i,\Eh} \big] |\vh|_{1,\Eh} 
+ \big[\sum_{i=0}^{k+1} |\Kci|_{i,\infty,\Eh} |\qh|_{k+1-i,\Eh} \big] \|\vh\|_{\Eh} \bigg),
\end{align*}
where we used that $\vh$ is linear.
Using \eqref{vel-bound} and \eqref{perm-bound} we obtain
		\begin{align*}
\sigma_E\inp[\Ki\q]{\v} \le Ch^{k+1}\|\Ki\|_{k+1,\infty,E} \|\q\|_{k+1,E} \|\v\|_{1,E}.
		\end{align*}
		Summation over all elements completes the proof.
	\end{proof}
	The following result establishes superconvergence of the pressure if the $H^2$-elliptic regularity which is defined below holds. 
	Let $\phi$ be the solution of
	\begin{align} \label{adjoint-equation}
		-\dvrg K\nabla\phi &= -(\Qc_h^{k-1}p - p_h) 
\quad \mbox{in }\O, \qquad \phi = 0 \quad \mbox{on } \partial \Omega .
	\end{align}
We say that this problem satisfies $H^{2}$-elliptic regularity if 
\begin{align}
\| K \nabla \phi \|_1 + \|\phi\|_{2} \le C\|\Qc_h^{k-1}p - p_h\| \label{ell-reg}
\end{align}
with constant $C$ which may depend on $K$ and $\Omega$ but is 
independent of $\phi$. Some sufficient conditions for \eqref{ell-reg} can be found in \cite{grisvard1985elliptic,lions1972non}. 
	In the proof of the theorem below, we follow the argument in \cite{Douglas-Roberts-1985} with appropriate modification 
	to deal with the quadrature terms.
	\begin{theorem} \label{thm:superconvergence}
Assume that the partition $\Tc_h$ consists of $h^2$-parallelograms in
2d or regular $h^2$-parallelepipeds in 3d. Assume also 
that $\Ki\in W^{k+1,\infty}_{\Tc_h}$, and that the
$H^2$-elliptic regularity \eqref{ell-reg} holds. Then, for the
pressure $p_h$ of the MFMFE method \eqref{mfmfe-1}-\eqref{mfmfe-2},
there exists a constant $C$ independent of $h$ such that
\begin{align}\label{eqn:superconvergence}
	\|\Qc_h^{k-1} p - p_h\| \le Ch^{k+1}(\|\u\|_{k} + \|\dvrg\u\|_k).
\end{align}
	\end{theorem}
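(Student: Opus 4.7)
The plan is to adapt the classical Douglas--Roberts duality argument \cite{Douglas-Roberts-1985} to account for the quadrature perturbation. Let $\boldsymbol{\psi} = -K\nabla\phi$, where $\phi$ solves the adjoint problem \eqref{adjoint-equation}, so that $\dvrg\boldsymbol{\psi} = \Qc_h^{k-1}p - p_h \in W_h^{k-1}$, $K^{-1}\boldsymbol{\psi} = -\nabla\phi$, and the elliptic regularity \eqref{ell-reg} gives $\|\boldsymbol{\psi}\|_1 + \|\phi\|_2 \leq C\|\Qc_h^{k-1}p - p_h\|$.

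Since $\Pi_{RT}^{k-1}$ commutes with divergence modulo the $L^2$-projection and $\dvrg\boldsymbol{\psi} \in W_h^{k-1}$, one finds $(\dvrg \Pi_{RT}^{k-1}\boldsymbol{\psi}, w) = (\dvrg\boldsymbol{\psi}, w)$ for all $w \in W_h^{k-1}$; taking $w = \Qc_h^{k-1}p - p_h$ and using $L^2$-orthogonality of $\Qc_h^{k-1}$ then yields
\begin{align*}
\|\Qc_h^{k-1}p - p_h\|^2 = (p - p_h, \dvrg \Pi_{RT}^{k-1}\boldsymbol{\psi}).
\end{align*}
Testing \eqref{error-eq-1} with $\v = \Pi_{RT}^{k-1}\boldsymbol{\psi} \in \V_{RT,h}^{k-1} \subset \V_h^k$, the boundary term vanishes by \eqref{R-prop-1}, leaving
\begin{align*}
\|\Qc_h^{k-1}p - p_h\|^2 = (K^{-1}(\u - \u_h), \Pi_{RT}^{k-1}\boldsymbol{\psi}) + \sigma(K^{-1}\u_h, \Pi_{RT}^{k-1}\boldsymbol{\psi}).
\end{align*}
The goal becomes to bound both terms by $Ch^{k+1}(\|\u\|_k + \|\dvrg\u\|_k)\|\Qc_h^{k-1}p - p_h\|$.

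For the consistency term I would split $\Pi_{RT}^{k-1}\boldsymbol{\psi} = \boldsymbol{\psi} + (\Pi_{RT}^{k-1}\boldsymbol{\psi} - \boldsymbol{\psi})$. The $\boldsymbol{\psi}$ contribution becomes $-(\u-\u_h, \nabla\phi)$, and integration by parts (the boundary integrals vanish because $\phi=0$ on $\Gd$ and $(\u-\u_h)\cdot\n=0$ on $\Gn$) combined with \eqref{error-eq-2} turns it into $(\dvrg(\u-\u_h), \phi - \Qc_h^{k-1}\phi)$, which is $O(h^{k+1}\|\dvrg\u\|_k)\|\Qc_h^{k-1}p - p_h\|$ by \eqref{velocity-error-est-2}, \eqref{l2-app-prop} at $j=1$, and the regularity bound on $\phi$. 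The complementary contribution is controlled by \eqref{velocity-error-est-1} together with \eqref{app-prop-1} at $j=1$ and the regularity bound on $\boldsymbol{\psi}$.

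The main obstacle is the quadrature term, since the generic bound \eqref{quad-error-2} gives only $O(h^k)$ and an extra factor of $h$ must be extracted. I would decompose $\Pi_{RT}^{k-1}\boldsymbol{\psi} = (\Pi_{RT}^{k-1}\boldsymbol{\psi} - \Pi_{RT}^0\boldsymbol{\psi}) + \Pi_{RT}^0\boldsymbol{\psi}$ and treat the summands differently. For the first, applying \eqref{quad-error-2} together with $\|\Pi_{RT}^{k-1}\boldsymbol{\psi} - \Pi_{RT}^0\boldsymbol{\psi}\| \le Ch\|\boldsymbol{\psi}\|_1$ (from \eqref{app-prop-1}) produces the extra $h$. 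For the second, since $\Pi_{RT}^0\boldsymbol{\psi} \in \V_{RT,h}^0$, the superconvergent quadrature estimate \eqref{super-q-error} applies and delivers the full $h^{k+1}$; controlling $\|\u_h\|_{k+1,E}$ requires the inverse inequality \eqref{inverse-ineq}, while $\|\u_h\|_{k,E}$ is bounded via \eqref{er-eq-1-4} and \eqref{op-cont}, and $\|\Pi_{RT}^0\boldsymbol{\psi}\|_{1,h}$ by $\|\boldsymbol{\psi}\|_1$ via standard reference-element scaling. Collecting the estimates and dividing by $\|\Qc_h^{k-1}p - p_h\|$ produces \eqref{eqn:superconvergence}.
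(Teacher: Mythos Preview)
Your overall duality strategy matches the paper's, and the treatment of the consistency term $(K^{-1}(\u-\u_h),\Pi_{RT}^{k-1}\boldsymbol{\psi})$ is correct. The gap is in the quadrature term. After applying \eqref{super-q-error} to $\sigma(K^{-1}\u_h,\Pi_{RT}^0\boldsymbol{\psi})$ you obtain
\[
C\sum_{E}h^{k+1}\|\u_h\|_{k+1,E}\|\Pi_{RT}^0\boldsymbol{\psi}\|_{1,E},
\]
and you propose to control $\|\u_h\|_{k+1,E}$ via the inverse inequality \eqref{inverse-ineq}. But that step costs exactly one power of $h$: $\|\u_h\|_{k+1,E}\le Ch^{-1}\|\u_h\|_{k,E}$, so the bound collapses to $Ch^{k}\|\u\|_k\|\boldsymbol{\psi}\|_1$, which is only $\mathcal{O}(h^k)$. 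Since $\u_h$ is merely a discrete function, there is no regularity of $\u$ to invoke here, and no other mechanism in your argument recovers the lost factor.

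The paper avoids this precisely by \emph{not} testing with $\Pi_{RT}^{k-1}\boldsymbol{\psi}$. It introduces the MFMFE discretization $(\u_h^*,\phi_h^*)$ of the adjoint problem and tests \eqref{error-eq-1} with $\Pi_{RT}^{k-1}\u_h^*$. Combining this with the adjoint error equation $(K^{-1}\u^*,\Pi_{RT}^{k-1}\u-\u_h)-(K^{-1}\u_h^*,\Pi_{RT}^{k-1}\u-\u_h)_Q=0$ (valid because $\dvrg(\Pi_{RT}^{k-1}\u-\u_h)=0$) converts the quadrature residual into $\sigma(K^{-1}\Pi_{RT}^{k-1}\u_h^*,\Pi_{RT}^{k-1}\u)$. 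Now the roles are swapped: the argument that sits in the ``high-derivative'' slot of \eqref{super-q-error} is $\Pi_{RT}^{k-1}\u$, the projection of the \emph{exact} solution, whose higher norms are controlled by the regularity of $\u$ through \eqref{rt-op-cont} rather than by an inverse inequality; the adjoint quantity enters only through $\|\Pi_{RT}^{k-1}\u_h^*-\Pi_{RT}^0\u^*\|$ and $\|\Pi_{RT}^0\u^*\|_{1}$, both of order $\|\u^*\|_1$. This swap, effected by the discrete adjoint, is the missing idea in your proposal.
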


	\begin{proof}
The proof makes use of a duality argument. Let $\phi$ be the
solution of \eqref{adjoint-equation}.  Denoting $-K \nabla \phi$ by
$\u^*$, $(\u^*, \phi)$ satisfy
\begin{align}
\inp[\Ki \u^*]{\v} - \inp[\phi]{\dvrg \v} &= 0, \quad \v \in H(\dvr;\O), 
\label{super-err-1}\\
\inp[\dvrg \u^*]{q} &= -\inp[\Qc_h^{k-1}p - p_h]{q}, \quad q \in L^2(\O). 
\label{super-err-2}
\end{align}
{\blue
Taking $\v = \u - \u_h$, $q = -(\Qc_h^{k-1} p - p_h)$ and adding
the two equations gives
		\begin{align}
		\begin{aligned}
\inp[\Ki \u^*]{\u - \u_h} - \inp[\phi]{\dvrg (\u - \u_h)} 
- \inp[\dvrg \u^*]{\Qc_h^{k-1} p - p_h} = \| \Qc_h^{k-1}p - p_h \|^2 .
\end{aligned} \label{super-aux-1}
\end{align}
Consider the discretization of
\eqref{super-err-1}--\eqref{super-err-2} as in
\eqref{mfmfe-1}--\eqref{mfmfe-2} and let $(\u_h^*, \phi_h^*)$ be the
solution of the discrete problem.  We now use the Galerkin orthogonality
\eqref{error-eq-1}--\eqref{error-eq-2} with $\v = \Pi_{RT}^{k-1}\u_h^*$
and $w = \Qc_h^{k-1} \phi$ to get
\begin{equation}\label{Gal-orth}
\inp[\Ki\u]{\Pi_{RT}^{k-1}\u_h^*} - \inp[\Ki\u_h]{\Pi_{RT}^{k-1}\u_h^*}_Q 
- \inp[\Qc_h^{k-1}p-p_h]{\dvrg\Pi_{RT}^{k-1}\u_h^*} 
- \inp[\dvrg(\u-\u_h)]{\Qc_h^{k-1} \phi} = 0,
\end{equation}
where we used that $(p - \Qc_h^{k-1} p, \dvrg \Pi_{RT}^{k-1}\u_h^*) = 0$ 
due to \eqref{l2-prop-div} and 
$\gnp[g -\Rc_h^{k-1} g]{\Pi_{RT}^{k-1}\u_h^*\cdot\n}_{\Gd} = 0$
due to \eqref{R-prop-1}. Subtracting \eqref{Gal-orth} from \eqref{super-aux-1} 
and using the symmetry of $(\Ki \cdot, \cdot)$ and $(\Ki \cdot, \cdot)_Q$
gives
		\begin{multline*}
			\inp[\Ki (\u^* - \Pi_{RT}^{k-1}\u_h^*)]{\u} - \inp[\Ki \u^*]{\u_h}  + \inp[\Ki  \Pi_{RT}^{k-1}\u_h^*]{\u_h}_Q \\
			- \inp[\phi- \Qc_h^{k-1} \phi]{\dvrg (\u - \u_h)} 
- \inp[\dvrg (\u^*- \Pi_{RT}^{k-1} \u_h^*)]{\Qc_h^{k-1}p - p_h} 			= \| \Qc_h^{k-1}p - p_h \|^2 .
		\end{multline*}
Since $\dvrg \Pi_{RT}^{k-1}\u_h^* = \dvrg \u_h^*$, and $\inp[\dvrg
  (\u^*- \u_h^*)]{q} = 0$ holds for all $q \in W_h^{k-1}$ from the
definition of $\u_h^*$, the last term in the left-hand side
vanishes. Therefore we have
		\begin{align}
			\inp[\Ki (\u^* - \Pi_{RT}^{k-1}\u_h^*)]{\u - \u_h} - \sigma\inp[\Ki \Pi_{RT}^{k-1}\u_h^*]{\u_h}  - \inp[\phi- \Qc_h^{k-1} \phi]{\dvrg (\u - \u_h)} = \| \Qc_h^{k-1}p - p_h \|^2 . \label{duality-equation}
		\end{align}
		}
		with $\sigma\inp[\Ki \Pi_{RT}^{k-1}\u_h^*]{\u_h} = \inp[\Ki \Pi_{RT}^{k-1}\u_h^*]{\u_h} - \inp[\Ki \Pi_{RT}^{k-1}\u_h^*]{\u_h}_Q $. 
Observe that the difference of \eqref{super-err-1} and its discrete 
counterpart gives 
		\begin{align*}
		\inp[\Ki \u^*]{\Pi_{RT}^{k-1} \u - \u_h} - \inp[\Ki \u_h^*]{\Pi_{RT}^{k-1} \u - \u_h}_Q = 0,
		\end{align*}
because $\dvrg (\Pi_{RT}^{k-1} \u - \u_h) = 0$. From this we obtain 
		\begin{align*}
		\sigma\inp[\Ki \Pi_{RT}^{k-1} \u_h^*]{ \u_h} &= \sigma\inp[\Ki \Pi_{RT}^{k-1} \u_h^*]{ \Pi_{RT}^{k-1} \u} - \sigma\inp[\Ki \Pi_{RT}^{k-1} \u_h^*]{ \Pi_{RT}^{k-1} \u - \u_h } \\
		&= \sigma\inp[\Ki \Pi_{RT}^{k-1} \u_h^*]{ \Pi_{RT}^{k-1} \u} - \inp[\Ki \Pi_{RT}^{k-1} \u_h^*]{ \Pi_{RT}^{k-1} \u - \u_h } + \inp[\Ki \Pi_{RT}^{k-1} \u_h^*]{ \Pi_{RT}^{k-1} \u - \u_h }_Q \\
		&= \sigma\inp[\Ki \Pi_{RT}^{k-1} \u_h^*]{ \Pi_{RT}^{k-1} \u} + \inp[\Ki (\u^*-\Pi_{RT}^{k-1} \u_h^*)]{ \Pi_{RT}^{k-1} \u - \u_h } \\
		&\quad - \inp[\Ki (\u_h^* - \Pi_{RT}^{k-1} \u_h^*)]{\Pi_{RT}^{k-1} \u - \u_h}_Q,
		\end{align*}
and we can rewrite \eqref{duality-equation} further as
\begin{align}
\begin{aligned}
& \inp[\Ki (\u^* - \Pi_{RT}^{k-1}\u_h^*)]{\u - \Pi_{RT}^{k-1} \u} 
+ \inp[\Ki (\u_h^* - \Pi_{RT}^{k-1} \u_h^*)]{\Pi_{RT}^{k-1} \u - \u_h}_Q  \\
& \qquad\qquad 
- \sigma\inp[\Ki \Pi_{RT}^{k-1} \u_h^*]{ \Pi_{RT}^{k-1} \u} 
- \inp[\phi- \Qc_h^{k-1} \phi]{\dvrg (\u - \u_h)} = \| \Qc_h^{k-1}p - p_h \|^2 . 
\label{super-aux-2}
\end{aligned}
\end{align}
We will show that the terms on left above can be bounded as follows:
\begin{align}
\label{duality-estm-1} |\inp[\Ki (\u^* - \Pi_{RT}^{k-1}\u_h^*)]{\u - \Pi_{RT}^{k-1} \u}| 
&\le Ch^{k+1} \| \Qc_h^{k-1}p - p_h \| \| \u \|_k,  \\
\label{duality-estm-2} |\inp[\Ki (\u_h^* - \Pi_{RT}^{k-1} \u_h^*)]{\Pi_{RT}^{k-1} \u - \u_h}_Q| 
&\le Ch^{k+1} \| \Qc_h^{k-1}p - p_h \| \| \u \|_k , \\
\label{duality-estm-3} | \,\sigma\inp[\Ki \Pi_{RT}^{k-1} \u_h^*]{\Pi_{RT}^{k-1}\u}| &\le Ch^{k+1} \| \Qc_h^{k-1}p - p_h \| \| \u \|_k, \\
\label{duality-estm-4} | \inp[\phi- \Qc_h^{k-1} \phi]{\dvrg (\u - \u_h)} | &\le Ch^{k+1} \| \Qc_h^{k-1}p - p_h \| \| \dvrg \u \|_k,
\end{align}
which, combined with \eqref{super-aux-2}, imply the statement of the Theorem.
For \eqref{duality-estm-1}, we note that
\begin{align}
\begin{aligned}
\|\u^* - \Pi_{RT}^{k-1}\u_h^*\| & \le \|\u^* - \Pi_{RT}^{k-1}\u^*\| 
+ \|\Pi_{RT}^{k-1}(\Pi_{RT}^{k-1}\u^* - \u_h^*)\|
\le \|\u^* - \Pi_{RT}^{k-1}\u^*\| + 
C \|\Pi_{RT}^{k-1}\u^* - \u_h^*\| \\
& \le \|\u^* - \Pi_{RT}^{k-1}\u^*\| 
+ C(\|\Pi_{RT}^{k-1}\u^* - \u^*\| + \|\u^* - \u_h^*\|)
\le C h \|\u^*\|_1,
\label{super-aux-3}
\end{aligned}
\end{align}
where we used \eqref{rt-cont}, \eqref{app-prop-1}, and a bound for the
discretization error 
\begin{equation}\label{u-h-*-error}
\|\u^* - \u_h^*\| \le C h\|\u^*\|_1, 
\end{equation}
which is obtained in a manner similar to the velocity error estimate
\eqref{velocity-error-est-1}. Bound \eqref{duality-estm-1} 
follows from the use of the Cauchy--Schwarz inequality, \eqref{super-aux-3},
\eqref{app-prop-1}, and \eqref{ell-reg}. Bound \eqref{duality-estm-2}
is obtained in a similar way, by adding and subtracting $\u^*$ in the first
component and $\u$ in the second component, and using \eqref{u-h-*-error},
\eqref{super-aux-3}, \eqref{app-prop-1}, \eqref{velocity-error-est-1}, and
\eqref{ell-reg}. Bound \eqref{duality-estm-3} follows from 
\begin{align*}
| \sigma\inp[\Ki \Pi_{RT}^{k-1}\u_h^*]{\Pi_{RT}^{k-1}\u} | 
&\le 
|\sigma\inp[\Ki (\Pi_{RT}^{k-1}\u_h^* - \Pi_{RT}^0 \u^*)]{\Pi_{RT}^{k-1}\u} | 
+ | \sigma\inp[\Ki \Pi_{RT}^0 \u^* ]{\Pi_{RT}^{k-1}\u} | \\
& \le C (h^k \|\u \|_k \|\Pi_{RT}^{k-1}\u_h^* - \Pi_{RT}^0 \u^*\|
+ h^{k+1}\|\u\|_k \|\u^*\|_1)
\le C h^{k+1} \| \Qc_h^{k-1}p - p_h \| \| \u \|_k,
\end{align*}
where we used \eqref{quad-error-2}, \eqref{super-q-error}, 
\eqref{rt-op-cont}, \eqref{super-aux-3}, \eqref{app-prop-1}, 
and \eqref{ell-reg}. Finally, \eqref{duality-estm-4} follows from 
\eqref{l2-app-prop}, \eqref{velocity-error-est-2}, and \eqref{ell-reg}.
\end{proof}

Using the above result we can easily show superconvergence of the
pressure at the Gauss points. For an element $E$, let $|||\cdot|||_E$
denote the discrete $L^2(E)$-norm computed by mapping to the reference
element $\Eh$ and applying the tensor-product Gauss quadrature rule
with $k$ points in each variable. It is easy to see that $|||w|||_E =
\|w\|_E$ for $w \in W_h^{k-1}(E)$. Assuming continuous pressure $p|_E$, 
let $p^I|_E \in W_h^{k-1}(E)$ be 
the Lagrange interpolant of $p|_E$ at the $k^d$ Gauss points. It is shown in 
\cite[Lemma 4.3]{ELW} that
\begin{equation}\label{qp-pi}
\|\Qc_h^{k-1} p - p^I\| \le C h^{k+1} \|p\|_{k+1}.
\end{equation}
We now have
\begin{align*}
|||p - p_h||| = |||p^I - p_h||| = \|p^I -p_h\| \le \|p^I - \Qc_h^{k-1} p\|
+ \|\Qc_h^{k-1} p - p_h\| \le C h^{k+1} (\|\u\|_k + \|\nabla\cdot \u\|_k + \|p\|_{k+1}),
\end{align*}
using \eqref{qp-pi} and \eqref{eqn:superconvergence}.

We next show that the above superconvergence result for $\|
\Qc_h^{k-1}p - p_h \|$ can be used to compute a higher order
approximation to the pressure $p$ in the $L^2(\O)$-norm, 
using a variant of the local postprocessing proposed in \cite{Stenberg-91}. 
The postprocessing idea is also utilized for {\em a posteriori} 
error estimation (see e.g., \cite{Lovadina-Stenberg-06}).
Let $\tilde{W}_h^{k}$ be the $L^2$-orthogonal complement of 
$W_h^{0}$ in $W_h^{k}$. We now define $p_h^* \in W_h^k$ by  
	\begin{align}
\label{post-process1} \Qc_h^{0} p_h^* &= \Qc_h^0 p_h , \\
\label{post-process2} (\nabla p_h^*, \nabla q)_E &= - (\Ki \u_h, \nabla q)_E, \qquad q \in \tilde{W}_h^k (E), \forall E \in \Tc_h .
	\end{align}

	\begin{theorem}
Under the assumption of Theorem~\ref{thm:superconvergence}, 
there exists a constant $C$ independent of $h$ such that
\begin{align}
\| p - p_h^* \| \le Ch^{k+1} ( \|\u\|_{k} + \|\dvrg\u\|_k + \| p \|_{k+1}) .
\end{align}
	\end{theorem}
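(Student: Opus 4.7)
The plan is to prove the estimate by a triangle inequality argument, writing
$$
\|p - p_h^*\| \le \|p - \Qc_h^k p\| + \|\Qc_h^k p - p_h^*\|,
$$
and then bounding the two pieces separately. The first piece is immediate from the standard $L^2$-projection approximation property: $\|p - \Qc_h^k p\| \le Ch^{k+1}\|p\|_{k+1}$, which follows by the usual Bramble--Hilbert and scaling argument on the admissible element shapes, exactly as in \eqref{l2-app-prop}.

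For the second piece, I will decompose $w := \Qc_h^k p - p_h^*$ into its $L^2$-orthogonal mean part and zero-mean part,
$w = \Qc_h^0 w + \tilde w$, with $\tilde w = (I - \Qc_h^0) w \in \tilde{W}_h^k$. The mean part is easy: since $\Qc_h^0 \Qc_h^k = \Qc_h^0$ and the postprocessing condition \eqref{post-process1} gives $\Qc_h^0 p_h^* = \Qc_h^0 p_h$, we have $\Qc_h^0 w = \Qc_h^0(p - p_h) = \Qc_h^0(\Qc_h^{k-1} p - p_h)$, so that
$$
\|\Qc_h^0 w\| \le \|\Qc_h^{k-1} p - p_h\| \le Ch^{k+1}(\|\u\|_k + \|\dvrg \u\|_k)
$$
by Theorem~\ref{thm:superconvergence}.

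For the zero-mean part, I would use an element-wise Poincar\'e inequality $\|\tilde w\|_E \le C h\,\|\nabla \tilde w\|_E = C h\,\|\nabla w\|_E$ (valid on $h^2$-parallelograms/regular $h^2$-parallelepipeds by a scaling argument, since $\tilde w$ has zero mean on $E$), and then estimate $\|\nabla w\|_E$ by testing with $q = \tilde w \in \tilde{W}_h^k(E)$ in the postprocessing equation \eqref{post-process2}. Using $\u = -K\nabla p$, equation \eqref{post-process2} becomes
$$
(\nabla w,\nabla q)_E
= (\nabla(\Qc_h^k p - p),\nabla q)_E + (\Ki(\u_h - \u),\nabla q)_E, \quad q \in \tilde{W}_h^k(E),
$$
and since $\nabla q = \nabla w$ for $q = \tilde w$, Cauchy--Schwarz yields
$$
\|\nabla w\|_E \le \|\nabla(p - \Qc_h^k p)\|_E + C\|\u - \u_h\|_E.
$$
Squaring, summing over $E$, and using the standard $H^1$-estimate $\|\nabla(p - \Qc_h^k p)\| \le C h^k \|p\|_{k+1}$ together with the velocity estimate \eqref{velocity-error-est-1} gives
$$
\|\tilde w\|^2 \le Ch^2\bigl(\|\nabla(p - \Qc_h^k p)\|^2 + \|\u - \u_h\|^2\bigr) \le Ch^{2k+2}(\|p\|_{k+1}^2 + \|\u\|_k^2).
$$
Combining the three bounds yields the desired estimate.

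The main obstacle I anticipate is keeping the Poincar\'e constant and the $H^1$-projection estimate $\|\nabla(p - \Qc_h^k p)\|_E \le Ch^k\|p\|_{k+1,E}$ uniform in $h$ on the mapped $h^2$-parallelogram / regular $h^2$-parallelepiped elements; both require scaling via the Piola-type transformation and careful use of the mapping bounds \eqref{mapping-bounds}--\eqref{map-deriv-bounds}, but no new machinery beyond what is already established in the preceding sections.
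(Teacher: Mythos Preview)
Your proposal is correct and follows essentially the same route as the paper: the triangle inequality with $\Qc_h^k p$, the orthogonal decomposition of $\Qc_h^k p - p_h^*$ into its piecewise-constant part (controlled by Theorem~\ref{thm:superconvergence}) and its zero-mean part in $\tilde{W}_h^k$, and then testing \eqref{post-process2} with that zero-mean part together with the element-wise Friedrichs/Poincar\'e inequality. The paper phrases the zero-mean component as $\tilde{\Qc}_h^k p - \tilde{p}_h$ with $\tilde{p}_h := p_h^* - \Qc_h^0 p_h$, which is exactly your $\tilde w$, and obtains the same gradient bound $\|\nabla_h(p - \Qc_h^k p)\| + C\|\u - \u_h\|$ via Bramble--Hilbert plus an inverse estimate.
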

	\begin{proof}
Let $\tilde{\Qc}_h^k$ be the $L^2$ orthogonal projection onto
$\tilde{W}_h^k$.  By the triangle inequality it is enough to estimate
$\| \Qc_h^k p - p_h^* \|$. Let $\tilde{p}_h := p_h^* - \Qc_h^0 p_h$. 
Considering the decomposition 
$\Qc_h^k p - p_h^* = (\Qc_h^{0} p - \Qc_h^0 p_h) + (\tilde{\Qc}_h^k p - \tilde{p}_h)$, it is sufficient to estimate 
		$\| \tilde{\Qc}_h^k p - \tilde{p}_h \|$ by Theorem~\ref{thm:superconvergence}. Recalling that {\blue $\nabla p = - K^{-1} \u$}, we have 
		\begin{align*}
		(\nabla_h (p - p_h^*), \nabla_h q) = - (\Ki (\u - \u_h), \nabla_h q), \qquad \forall q \in \tilde{W}_h^k,
		\end{align*}
		where $\nabla_h$ is the element-wise gradient. From $p-p_h^* = (p - \Qc_h^k p) + (\Qc_h^{0} p - \Qc_h^0 p_h) + (\tilde{\Qc}_h^k p - \tilde{p}_h)$ 
		and by taking $q = \tilde{\Qc}_h^k p - \tilde{p}_h$ in the above equation, we get 
		\begin{align*}
		\| \nabla_h (\tilde{\Qc}_h^k p - \tilde{p}_h) \| &\le \| \nabla_h (p - \Qc_h^k p) \| + \| \Ki (\u - \u_h) \| 
		\le C h^k (\| p \|_k  + \| \u \|_k),
		\end{align*}
		where we used the Bramble--Hilbert lemma, an inverse estimate, 
and \eqref{velocity-error-est-1}.
		Since $W_h^{0}$ is the space of element-wise constants on $\Tc_h$, $\tilde{\Qc}_h^k p - \tilde{p}_h$ is orthogonal to element-wise constants. 
		Then the element-wise Friedrichs' inequality yields 
$\| \tilde{\Qc}_h^k p - \tilde{p}_h \|_E \le C h_E \| \nabla_h (\tilde{\Qc}_h^k p - \tilde{p}_h) \|_E$
for all $E \in \mathcal{T}_h$. The conclusion follows by combining this and the above inequality. 
	\end{proof}

\begin{remark}
Instead of the postprocessing 
\eqref{post-process1}-\eqref{post-process2}, one may use the 
postprocessing defined in \cite{Stenberg-91} and obtain a numerical
pressure that is convergent of order $\mathcal{O}(h^{k+1})$. The error
analysis is almost the same as the above.
\end{remark}

\section{Numerical results.}
In this section we present numerical experiments on
quadrilateral and hexahedral grids that validate the theoretical
results in the previous sections. {\blue The method has been implemented in
the finite element library deal.II \cite{dealii}. The code is available in 
the deal.II code gallery \cite{hMFMFE_code}}. In the first example we test the
method on a sequence of meshes obtained by a uniform isotropic
refinement of {\blue{an initial quadrilateral partition of the unit square}}. The boundary
conditions are chosen to be of Dirichlet type for simplicity. The test
case is constructed with the full permeability tensor coefficient
	\begin{align*}
	K = 
	\begin{pmatrix}
	(x+1)^2 + y^2 & \sin{(xy)} \\ 
	\sin{(xy)}	  & (x+1)^2
	\end{pmatrix}, 
	\end{align*}
	and the analytical solution
	\begin{align*}
	p = x^3y^4 + x^2 + \sin(xy)\cos(xy).
	\end{align*}
	%
  	\begin{figure}
  		\centering
  		\begin{subfigure}[b]{0.3\textwidth}
  			\includegraphics[width=\textwidth]{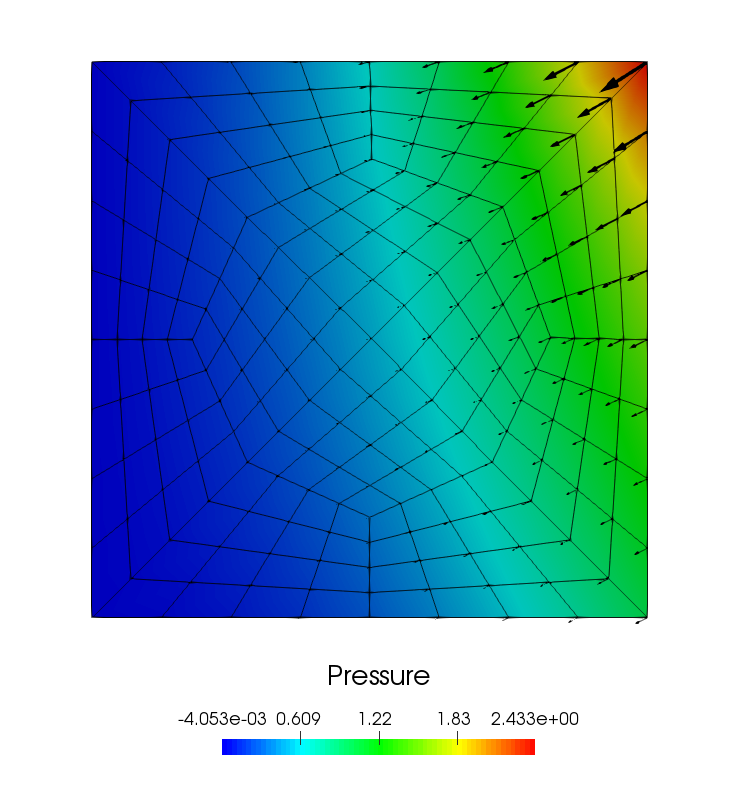}
  		\end{subfigure}
  		\begin{subfigure}[b]{0.3\textwidth}
  			\includegraphics[width=\textwidth]{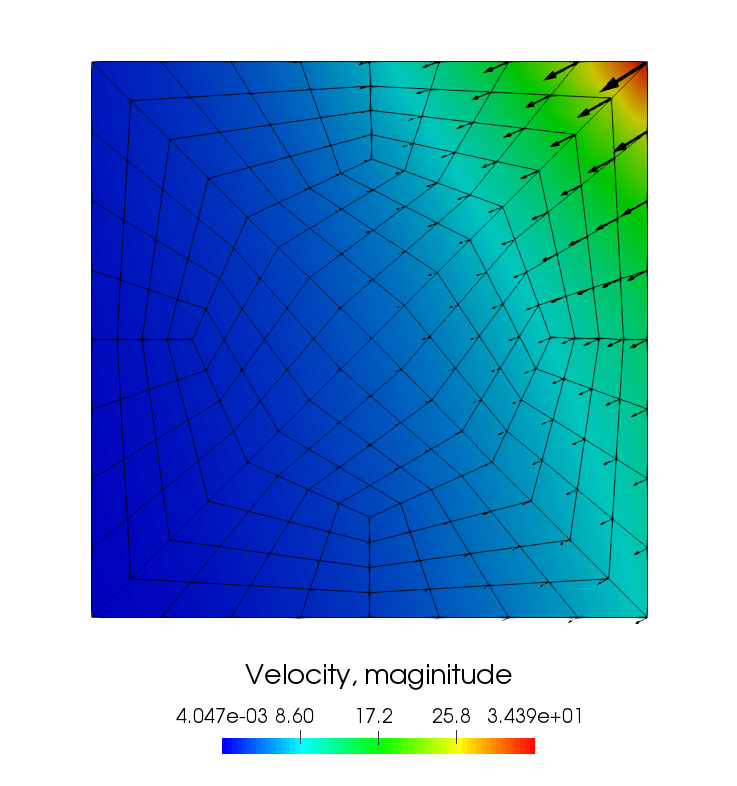}
  		\end{subfigure}
  		\caption{Computed solution for Example 1 
on the third level of refinement}\label{fig:4_1}
  	\end{figure}
	%

The computed pressure solution on the third level of refinement is
shown in Figure \ref{fig:4_1} (left), where the colors represent the
pressure values and the arrows represent the velocity
vectors. Similarly, Figure \ref{fig:4_1} (right) shows the velocity
solution, where colors represent the velocity magnitude. The numerical
relative errors and convergence rates are obtained on a sequence of
six mesh refinements and are reported in Table \ref{tab:4_1} for the
MFMFE methods of order $k=2,\,3,\,4$. We note that in all cases we see
the predicted convergence rate of order $\mathcal{O}(h^k)$ for all
variables in their natural norms, as well as superconvergence of
the pressures at the Gauss points, i.e., $|||p - p_h|||$ is of order
$\mathcal{O}(h^{k+1})$. We also observe
$\mathcal{O}(h^{k+1})$ convergence for the postprocessed pressure.
We note that the deterioration of the convergence rate of the divergence and
the superconvergence rate of the pressure for the $4$-th order method on the
finest grid is due to the fact that these errors
are very small and roundoff errors start having a noticeable effect.

	\begin{table}
		\begin{center}
			\begin{tabular}{r|cc|cc|cc|cc|cc} \hline
				\multicolumn{11}{c}{$k = 2$} \\ \hline
				& 
				\multicolumn{2}{c|}{$ \|\u - \u_h\| $} & 
				\multicolumn{2}{c|}{$ \|\nabla\cdot(\u - \u_h)\| $} &  
				\multicolumn{2}{c|}{$ \|p - p_h\| $} &
				\multicolumn{2}{c|}{$ |||p - p_h||| $} &
				\multicolumn{2}{c}{$ \|p - p_h^*\| $}\\ 
				$h$ & error & rate & error & rate & error & rate & error & rate & error & rate \\ \hline
				1/3	&	8.80E-02	&	--	&	1.46E-01	&	--	&	3.20E-02	&	--	&	5.80E-03	&	--	&	1.19E-02	&	--	\\	
				1/6	&	2.36E-02	&	1.9	&	3.74E-02	&	2.0	&	7.90E-03	&	2.0	&	7.73E-04	&	2.9	&	1.42E-03	&	3.1	\\	
				1/12	&	6.01E-03	&	2.0	&	9.41E-03	&	2.0	&	1.98E-03	&	2.0	&	1.18E-04	&	2.7	&	1.66E-04	&	3.1	\\	
				1/24	&	1.50E-03	&	2.0	&	2.36E-03	&	2.0	&	4.96E-04	&	2.0	&	1.70E-05	&	2.8	&	1.94E-05	&	3.1	\\	
				1/48	&	3.74E-04	&	2.0	&	5.89E-04	&	2.0	&	1.24E-04	&	2.0	&	2.30E-06	&	2.9	&	2.29E-06	&	3.1	\\	
				1/96	&	9.31E-05	&	2.0	&	1.47E-04	&	2.0	&	3.10E-05	&	2.0	&	2.99E-07	&	2.9	&	2.78E-07	&	3.1	\\	\hline\hline
				\multicolumn{11}{c}{$k = 3$} \\ \hline
				& 
				\multicolumn{2}{c|}{$ \|\u - \u_h\| $} & 
				\multicolumn{2}{c|}{$ \|\nabla\cdot(\u - \u_h)\| $} &  
				\multicolumn{2}{c|}{$ \|p - p_h\| $} &
				\multicolumn{2}{c|}{$ |||p - p_h||| $} &
				\multicolumn{2}{c}{$ \|p - p_h^*\| $}\\ 
				$h$ & error & rate & error & rate & error & rate & error & rate & error & rate \\ \hline
				1/3	&	1.35E-02	&	--	&	1.96E-02	&	--	&	3.16E-03	&	--	&	4.36E-04	&	--	&	1.03E-03	&	--	\\	
				1/6	&	1.69E-03	&	3.0	&	2.44E-03	&	3.0	&	3.95E-04	&	3.0	&	3.33E-05	&	3.7	&	5.33E-05	&	4.3	\\	
				1/12	&	2.09E-04	&	3.0	&	3.04E-04	&	3.0	&	4.95E-05	&	3.0	&	2.48E-06	&	3.8	&	2.79E-06	&	4.3	\\	
				1/24	&	2.59E-05	&	3.0	&	3.80E-05	&	3.0	&	6.19E-06	&	3.0	&	1.74E-07	&	3.8	&	1.55E-07	&	4.2	\\	
				1/48	&	3.22E-06	&	3.0	&	4.75E-06	&	3.0	&	7.73E-07	&	3.0	&	1.17E-08	&	3.9	&	9.04E-09	&	4.1	\\	
				1/96	&	4.02E-07	&	3.0	&	5.93E-07	&	3.0	&	9.67E-08	&	3.0	&	7.57E-10	&	4.0	&	5.44E-10	&	4.1	\\	\hline\hline
				\multicolumn{11}{c}{$k = 4$} \\ \hline
				& 
				\multicolumn{2}{c|}{$ \|\u - \u_h\| $} & 
				\multicolumn{2}{c|}{$ \|\nabla\cdot(\u - \u_h)\| $} &  
				\multicolumn{2}{c|}{$ \|p - p_h\| $} &
				\multicolumn{2}{c|}{$ |||p - p_h||| $} &
				\multicolumn{2}{c}{$ \|p - p_h^*\| $}\\ 
				$h$ & error & rate & error & rate & error & rate & error & rate & error & rate \\ \hline
				1/3	&	1.13E-03	&	--	&	1.52E-03	&	--	&	2.46E-04	&	--	&	2.83E-05	&	--	&	5.17E-05	&	--	\\	
				1/6	&	6.84E-05	&	4.1	&	9.24E-05	&	4.0	&	1.52E-05	&	4.0	&	1.00E-06	&	4.8	&	1.26E-06	&	5.4	\\	
				1/12	&	4.20E-06	&	4.0	&	5.74E-06	&	4.0	&	9.50E-07	&	4.0	&	3.55E-08	&	4.8	&	3.20E-08	&	5.3	\\	
				1/24	&	2.59E-07	&	4.0	&	3.58E-07	&	4.0	&	5.94E-08	&	4.0	&	1.20E-09	&	4.9	&	8.74E-10	&	5.2	\\	
				1/48	&	1.61E-08	&	4.0	&	2.25E-08	&	4.0	&	3.71E-09	&	4.0	&	3.98E-11	&	4.9	&	2.59E-11	&	5.1	\\	
				1/96	&	1.00E-09	&	4.0	&	4.96E-09	&	2.2	&	2.32E-10	&	4.0	&	8.78E-12	&	2.2	&	8.72E-12	&	1.6	\\	\hline
			\end{tabular}
		\end{center}
		\caption{Relative errors and convergence rates for Example 1.} \label{tab:4_1}
	\end{table}

In the second example, we focus on a 3d case. We let $K$ be a full
 permeability tensor with variable coefficients
	\begin{align*}
	K = 
	\begin{pmatrix}
	x^2 + (y+2)^2 & 0 		 & \cos(xy) \\
	0			  & z^2 + 2  & \sin(xy) \\
	\cos(xy)	  & \sin(xy) & (y+3)^2,
	\end{pmatrix}
	\end{align*}
	and solve the problem with Dirichlet boundary conditions and the analytical pressure solution chosen as follows
	\begin{align*}
	p = x^4y^3 + x^2 + yz^2 +\cos(xy) + \sin(z).
	\end{align*}
The initial computational domain is obtained as a smooth map of the
unit cube, i.e., we start with a $4\times 4\times 4$ unit cube mesh
and then apply the following transformation to its points
	\begin{align*}
	x &= \xh + 0.03\cos(3\pi\xh)\cos(3\pi\yh)\cos(3\pi\zh)\\
	y &= \yh - 0.04\cos(3\pi\xh)\cos(3\pi\yh)\cos(3\pi\zh)\\
	z &= \zh + 0.05\cos(3\pi\xh)\cos(3\pi\yh)\cos(3\pi\zh).
	\end{align*}
The sequence of meshes on which we perform the convergence study is
then obtained by a series of uniform refinements of the initial grid,
described above. Figure \ref{fig:4_2} (left) presents the pressure
solution, computed on the third level of refinement, where the colors
represent the pressure values and the arrows depict the velocity
vectors. The velocity magnitude is also shown in Figure \ref{fig:4_2}
(right). The computed numerical errors and convergence rates shown in
Table \ref{tab:4_2} once again confirm the theoretical results from
the error analysis section. We see the optimal $\mathcal{O}(h^k)$
order of convergence for all variables, and also
$\mathcal{O}(h^{k+1})$ superconvergence for the pressure.

	%
 	\begin{figure}
 		\centering
 		\begin{subfigure}[b]{0.35\textwidth}
 			\includegraphics[width=\textwidth]{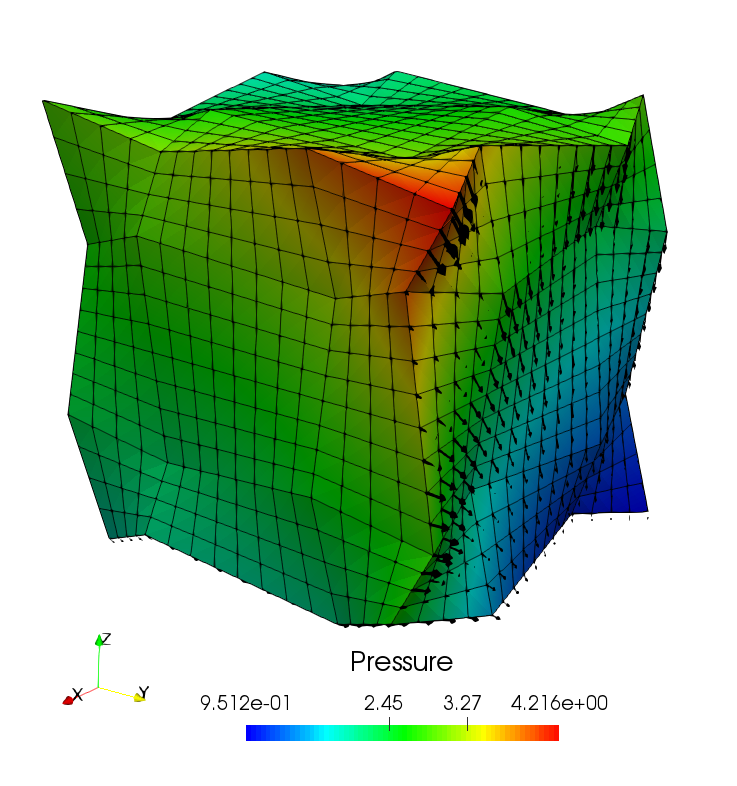}
 		\end{subfigure}
 		\begin{subfigure}[b]{0.35\textwidth}
 			\includegraphics[width=\textwidth]{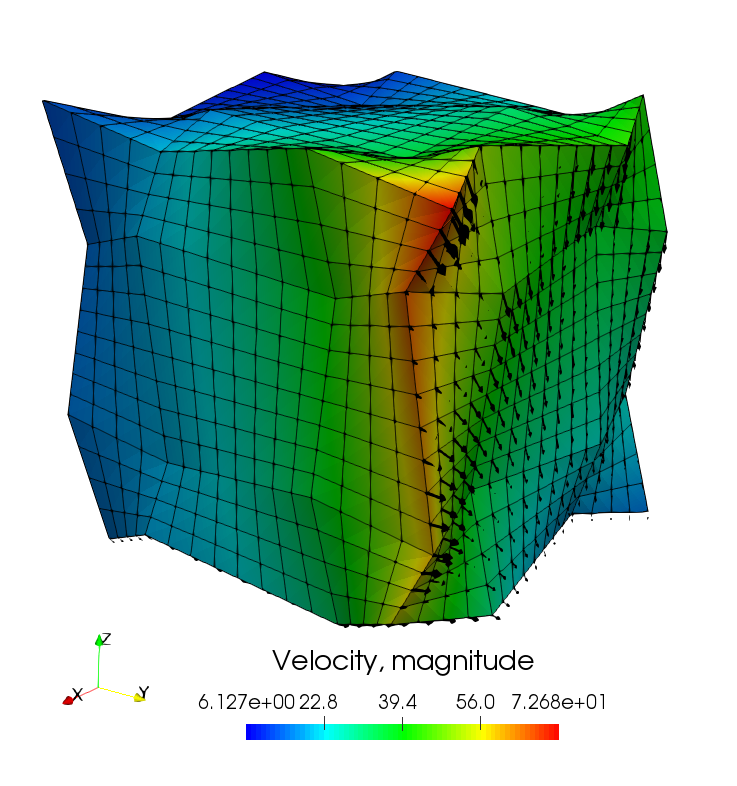}
 		\end{subfigure}
 \caption{Computed solution for Example 2 on the third level of refinement.}\label{fig:4_2}
 	\end{figure}
	%

	\begin{table}
		\begin{center}
			\begin{tabular}{r|cc|cc|cc|cc|cc} \hline
				\multicolumn{11}{c}{$k = 2$} \\ \hline
				& 
				\multicolumn{2}{c|}{$ \|\u - \u_h\| $} & 
				\multicolumn{2}{c|}{$ \|\nabla\cdot(\u - \u_h)\| $} &  
				\multicolumn{2}{c|}{$ \|p - p_h\| $} &
				\multicolumn{2}{c|}{$ |||p - p_h||| $} &
				\multicolumn{2}{c}{$ \|p - p_h^*\| $}\\ 
				$h$ & error & rate & error & rate & error & rate & error & rate & error & rate \\ \hline
				1/4	&	7.47E-03	&	--	&	2.92E-02	&	--	&	4.97E-03	&	--	&	1.63E-04	&	--	&	3.34E-04	&	--	\\	
				1/8	&	1.82E-03	&	2.0	&	7.24E-03	&	2.0	&	1.24E-03	&	2.0	&	2.23E-05	&	2.9	&	3.99E-05	&	3.1	\\	
				1/16	&	4.51E-04	&	2.0	&	1.81E-03	&	2.0	&	3.11E-04	&	2.0	&	3.07E-06	&	2.9	&	4.86E-06	&	3.0	\\	
				1/32	&	1.12E-04	&	2.0	&	4.51E-04	&	2.0	&	7.77E-05	&	2.0	&	4.12E-07	&	2.9	&	6.00E-07	&	3.0	\\	
				1/64	&	2.80E-05	&	2.0	&	1.13E-04	&	2.0	&	1.94E-05	&	2.0	&	5.38E-08	&	2.9	&	7.47E-08	&	3.0	\\	\hline\hline
				\multicolumn{11}{c}{$k = 3$} \\ \hline
				& 
				\multicolumn{2}{c|}{$ \|\u - \u_h\| $} & 
				\multicolumn{2}{c|}{$ \|\nabla\cdot(\u - \u_h)\| $} &  
				\multicolumn{2}{c|}{$ \|p - p_h\| $} &
				\multicolumn{2}{c|}{$ |||p - p_h||| $} &
				\multicolumn{2}{c}{$ \|p - p_h^*\| $}\\ 
				$h$ & error & rate & error & rate & error & rate & error & rate & error & rate \\ \hline
				1/4	&	5.06E-04	&	--	&	2.01E-03	&	--	&	2.03E-04	&	--	&	3.78E-06	&	--	&	1.23E-05	&	--	\\	
				1/8	&	6.37E-05	&	3.0	&	2.46E-04	&	3.0	&	2.54E-05	&	3.0	&	2.56E-07	&	3.9	&	6.93E-07	&	4.2	\\	
				1/16	&	7.93E-06	&	3.0	&	3.05E-05	&	3.0	&	3.17E-06	&	3.0	&	1.87E-08	&	3.8	&	4.06E-08	&	4.1	\\	
				1/32	&	9.87E-07	&	3.0	&	3.81E-06	&	3.0	&	3.97E-07	&	3.0	&	1.35E-09	&	3.8	&	2.46E-09	&	4.0	\\	
				1/64	&	1.21E-07	&	3.0	&	4.88E-07	&	3.0	&	4.96E-08	&	3.0	&	8.83E-11	&	3.9	&	1.50E-10	&	4.0	\\	\hline
			\end{tabular}
		\end{center}
		\caption{Relative errors and convergence rates for Example 2.} \label{tab:4_2}
	\end{table}

In summary, the numerical experiments confirm the theoretical
convergence results for the higher order MFMFE method both on
$h^2$-parallelograms and regular $h^2$-parallelepipeds.



\bibliographystyle{abbrv}
\bibliography{h-mfmfe}
\end{document}